\def\eqref#1{equation~\ref{#1}}
\def\1{\bm{1}}
\def\vone{{\bm{1}}}
\def\vp{{\bm{p}}}
\def\vq{{\bm{q}}}
\def\vu{{\bm{u}}}
\def\vv{{\bm{v}}}
\def\vw{{\bm{w}}}
\def\vx{{\bm{x}}}
\def\evbeta {{\bm \beta}}
\def\evmu{{\bm \mu}}
\def\evnu{{\bm \nu}}
\def\mA{{\bm{A}}}
\def\mB{{\bm{B}}}
\def\mI{{\bm{I}}}
\def\mJ{{\bm{J}}}
\def\mBeta{{\bm{\beta}}}
\DeclareMathAlphabet{\mathsfit}{\encodingdefault}{\sfdefault}{m}{sl}
\SetMathAlphabet{\mathsfit}{bold}{\encodingdefault}{\sfdefault}{bx}{n}
\def\gL{{\mathcal{L}}}
\def\gN{{\mathcal{N}}}
\def\sN{{\mathbb{N}}}
\def\sR{{\mathbb{R}}}
\DeclareMathOperator{\Tr}{Tr}
\newcommand{\nullsp}{{\rm null}}
\newcommand{\bbeta}{\bm \beta}
\newcommand{\bmu}{\bm \mu}
\newcommand{\bnu}{\bm \nu}
\newtheorem{theorem}{Theorem}
\newtheorem{proposition}{Proposition}
\newtheorem{lemma}{Lemma}
\newtheorem{claim}{Claim}
\newtheorem{assumption}{Assumption}
\newcommand{\correction}[1]{#1}
\title{\Large Criteria and Bias of Parameterized Linear Regression under Edge of Stability Regime}
\author{Peiyuan Zhang, Amin Karbasi%\thanks{ Use footnote for providing further information
%about author (webpage, alternative address)---\emph{not} for acknowledging
%funding agencies.  Funding acknowledgements go at the end of the paper.} 
\\
%Department of Computer Science\\
Yale University\\
%Pittsburgh, PA 15213, USA \\
\texttt{\{peiyuan.zhang, amin.karbasi\}@yale.edu} \\
%\And
%Ji Q. Ren \& Yevgeny LeNet \\
%Department of Computational Neuroscience \\
%University of the Witwatersrand \\
%Joburg, South Africa \\
%\texttt{\{robot,net\}@wits.ac.za} 
}
\begin{document}

\maketitle

\begin{abstract}
Classical optimization theory requires a small step-size for gradient-based methods to converge. Nevertheless, recent findings \cite{cohen2021gradient} challenge the traditional idea by empirically demonstrating Gradient Descent (GD) converges even when the step-size $\eta$ exceeds the threshold of $2/L$, where $L$ is the global smooth constant. This is usually known as the \emph{Edge of Stability} (EoS) phenomenon.  A widely held belief suggests that an objective function with subquadratic growth plays an important role in incurring EoS. In this paper, we provide a more comprehensive answer by considering the task of finding linear interpolator ${\bm \beta} \in \sR^{d}$ for regression with loss function $l(\cdot)$, where ${\bm \beta}$ admits parameterization as ${\bm \beta} = {\bm w}^2_{+} - {\bm w}^2_{-}$. Contrary to the previous work that suggests a subquadratic $l$ is necessary for EoS, our novel finding reveals that EoS occurs even when $l$ is quadratic under proper conditions. This argument is made rigorous by both empirical and theoretical evidence, demonstrating the GD trajectory converges to a linear interpolator in a non-asymptotic way. Moreover, the model under quadratic $l$, also known as a depth-$2$ \emph{diagonal linear network}, remains largely unexplored under the EoS regime. Our analysis then sheds some new light on the implicit bias of diagonal linear networks when a larger step-size is employed, enriching the understanding of EoS on more practical models.
\end{abstract}

\section{Introduction}
\vspace{-0.4em}
In the past decades, gradient-based optimization methods have become the main engine in the training of deep neural networks. These iterative methods provide efficient and scalable approaches for the minimization of large-scale loss functions. A key question that arises in the context is under \emph{what} conditions, these algorithms are guaranteed to converge. %not only with whether the algorithm is convergent but also with the exact global or local minima it finds. This line of research is often referred to as the study of implicit bias of gradient methods.
Classical analysis of gradient descent (GD) answers the question by asserting that a small step-size should be employed to ensure convergence. To be precise, for the minimization of $L$-smooth objective functions, sufficient condition rules that step-size $\eta$ should never come across the critical threshold $2/L$ (or equivalently $L < 2/\eta$). This guarantees every GD iteration decreases the objective until it converges. As a result, the iterative algorithm can be viewed as a discretization of a continuous ODE called Gradient Flow (GF), and the corresponding convergence behavior is therefore referred to as the GF or stable regime.

\vspace{-0.2em}
Nevertheless, in the work of \citet{cohen2021gradient}, it is observed, in the training process of certain learning models, GD and other gradient-based methods still converge even when the classical condition is violated, allowing for the use of much larger step-sizes. When this occurs, the objective does not exhibit the typical monotonic decrease, and the sharpness, defined as the largest eigenvalue of the objective function's Hessian matrix, frequently exceeds the threshold of $2/\eta$. Unlike the GF regime under small step-size, the GD trajectory often becomes violent, exhibiting oscillating and unstable behavior. Despite this, convergence is still achieved in the long run. This unconventional phenomenon is usually known as the Edge of Stability (EoS) regime. Similar results are also observed for algorithms including momentum methods or adaptive methods \citep{cohen2022adaptive}. 

\vspace{-0.2em}
In the recent several years, the new finding from \cite{cohen2021gradient} has pioneered many works to investigate the fascinating phenomena, from both empirical and theoretical aspects. Many theoretical works that attempt to explain the mechanism of EoS suggest that the subquadratic growth of the loss function plays a crucial role in causing EoS \citep{chen2022gradient, ahn2022learning}. In particular, \cite{ahn2022learning}\footnote{To be exact, \cite{ahn2022learning} originally considered the model $(u,v) \mapsto l(uv)$ where $l$ is a loss function that grows subquadratically and did not reformulate it as a linear regression with parameterized weight vector. Instead, the formulation of regression setting was introduced in the subsequent \cite{song2023trajectory}, and \cite{ahn2022learning}'s model is a special case of it. We combine the discussion here to give the audience a more complete picture.  }
and its subsequent work \cite{song2023trajectory} considered the linear regression task of finding a vector $\evbeta$ that interpolates single-point data $(\vx, y)$ %as $\langle \vx,y \rangle = y$ 
by minimizing the empirical risk $l\big(\langle \vx,\evbeta \rangle - y\big)$. The authors have rigorously shown that when $\evbeta$ is allowed a parameterized form $\evbeta =\evbeta(\vu, \vv)$, a loss $l(\cdot)$ with strictly subquadratic growth leads to the EoS phenomenon in running GD. When EoS occurs, the GD trajectory converges despite being highly unstable,  oscillating almost symmetrically around zero along the primary axis.
\begin{figure}
    \centering
    \includegraphics[width=0.97\textwidth]{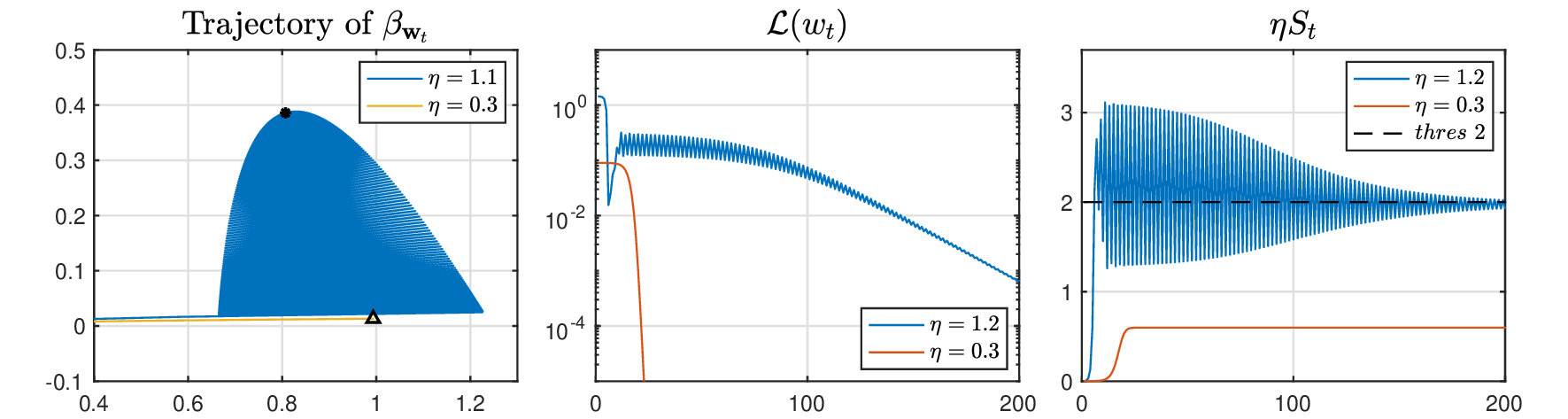}
    \vspace{-0.3em}
    \caption{\small Comparison between EoS and GF regime, represented by blue and red lines, under parameterized linear regression in (\ref{eq: erm}) with $l(a) = a^2/4$. The plots from left to right illustrate the trajectory of regression weight $\evbeta_{\vw_t}$ (star and triangle mark the stable points), the decrease of objective and $\eta S_t$, respectively, where $S_t$ is the sharpness at iteration $t$. EoS is featured by the $\eta S_t > 2$. Unlike previous assertions, we observe EoS \emph{also} occurs with quadratic $l(a) = a^2/4$. Rest parameters: $\vx = (1, 0.5)$, $y=1$ and $\alpha = 0.01$. }
    \label{fig: overview}
    \vspace{-0.5em}
\end{figure}

\vspace{-0.2em}
In this work, we aim to challenge the notion that subquadratic loss is necessary for causing EoS by investigating the same regressional task. We specify that the weight vector $\evbeta$ admits {\bf quadratic parameterization} as $\evbeta = \vw^2_{+} - \vw^2_{-}$, which recovers and extends the setting of \cite{ahn2022learning}. Our study presents both empirical and theoretical evidence to show that, when a {\bf quadratic} loss $l(a) = a^2/4$ is employed, GD with large step-size can converge to a linear interpolator within the EoS regime, provided certain {\bf conditions} are met. We believe this the \emph{first} among existing works to suggest that quadratic loss function can trigger EoS and to characterize the convergence.

\vspace{-0.2em}
It is important to note that the model we consider is not merely an artificially designed landscape solely to induce EoS under strict conditions. This investigation is related not only to the community exploring the intriguing convergence under irregularly large step-size but also to the broader community tackling implicit bias of gradient methods: when $l(\cdot)$ is quadratic, our framework recovers the renowned model of depth-2 \emph{diagonally linear networks}. A diagonal linear network captures the key features of deep networks while maintaining a simple structure, as each neuron connects to only one neuron in the next layer. Therefore, the study on the bias and generalization properties of this model has resulted in a rich line of important works in the past several years \citep{woodworth2020kernel}. While it has become well-studied when running GD with a classical step-size, it remains rather unexplored when it enters the EoS regime. We believe our work provides useful insights into the study of implicit bias of diagonal linear networks under large step-size by considering a simple but intuitive one-sample setting. 

\vspace{-0.4em}
\subsection{Our contribution and relation to previous works}
\vspace{-0.3em}
We discuss the scope and major contributions of our work below.
\vspace{-0.2em}
\begin{itemize} [leftmargin=2em]
    \item We consider running GD with a large constant step-size to find linear interpolators that admit quadratic parameterization for the one-sample linear regression task in $\sR^d$. This model is also called the depth-$2$ diagonal linear network. We show that empirically, convergence in the EoS regime is possible when $d > 1$ and the data does not constitute a degenerate case that can be reduced to a $d = 1$ setting.
    \vspace{-0.2em}
    \item The above conditions are verified in a theoretical analysis, in which we show that the iteration of GD converges to a linear interpolator $\evbeta_{\infty}$ under the EoS regime. We provide convergence analysis for two sub-regimes under EoS, i.e. $\mu\eta < 1$ and $\mu\eta > 1$ ($\mu$ is a scaling parameter related to the data $(\vx, y)$), which exhibit different convergence behavior.
    \vspace{-0.2em}
    \item In addition, we also characterize the generalization property for the implicit bias under the EoS regime by establishing upper bounds for $\| \evbeta_{\infty} - \evbeta^* \|$, where $\evbeta^*$ is the given sparse prior.
    \vspace{-0.2em}
    \item We also extend the one-sample results by empirically finding conditions in the more general $n$-sample case. This suggests that a non-degenerate overparameterized setting ($d > n$), is necessary for the EoS phenomenon. 
\end{itemize}
\vspace{-0.2em}
It is important to emphasize that our findings should not be interpreted as overturning existing results. Rather, they improve and complement the existing understanding of what conditions lead to EoS for the parameterized linear regression with quadratic $l$ by identifying additional criteria. This does not contradict that non-quadratic property is necessary to incur EoS, because, despite $l$ being quadratic, parameterization $\evbeta = \vw^2_{+} - \vw^2_{-}$ ensures non-vanishing third-order derivative of the entire objective function, which is proven to be important in works like \cite{damian2022self}. 

\vspace{-0.2em}
We particularly highlight the significance of the proof under $\eta\mu > 1$. A brief explanation is provided here, with further details in Section~\ref{sec: proof-overview}. Our proof technique is highly related to the bifurcation analysis of discrete dynamical systems, which indicates a parameterized family of systems, as $w_{t+1} = f_a(w_t)$, display different asymptotic properties\footnote{By this we mean the system being convergent to stable point or stable periodic orbits or becoming chaotic or divergent. Only convergence to the stable point of $0$ is the case we want to show.} if $a$ is fixed and takes different values. Some recent works including \cite{chen2023stability} showed that such a system with fixed $a$ can describe GD trajectory for certain models, guaranteeing the convergence for these models, whereas running GD on our model corresponds to a system with \emph{varying} $a$. This poses a more challenging task than ever, therefore our major innovation and difficulty is to show that under certain conditions, a phase transition will occur, such that the system travels on the phase diagram and finally becomes convergent.

\vspace{-0.4em}
\subsection{Related works}
\vspace{-0.3em}
\paragraph{Edge of Stability.} Although the phenomena of oscillation during the training process have been observed in several independent works \citep{xing2018walk,lewkowycz2020large,jastrzebski2021catastrophic},
the name of Edge of Stability was first found in \cite{cohen2021gradient}, which provided more formal definition and description. Among the theoretical exploration, some works attempt to find criteria that allow EoS to occur on general models \citep{ma2022multiscale,damian2022self,ahn2022understanding,arora2022understanding}. Nevertheless, these works do not provide very convincing arguments because they often incorporate demanding assumptions. An approach that is closer to this paper considers specific models and characterizes the convergence or implicit bias when EoS occurs, including \citep{chen2022gradient,zhu2022understanding,ahn2022learning}. Recently, such analysis has been extended to more complicated and more practical models, for instance, logistic regression \citep{wu2024implicit}, parameterized linear regression \citep{song2023trajectory,lu2023benign} and quadratic models \citep{chen2023stability}. It is also worth mentioning that, instead of explaining the unstable convergence of EoS, some works including \cite{li2022analyzing} studied how the sharpness grows during the early phases of the GD trajectory.
\vspace{-0.2em}
\paragraph{Implicit bias of diagonal linear networks.}
The diagonal linear network model, also called linear regression with quadratic parameterization, is one of the simplest deep network models that exhibit rich features and bias structure. \cite{vaskevicius2019implicit,zhao2022high,gunasekar2017implicit} were among the first works to explore the implicit bias when the weight admits a Hadamard parameterization $\vu \odot \vv$, which is provably equivalent to the quadratic parameterization $\vw^2_{+} - \vw^2_{-}$. The seminal work of \cite{woodworth2020kernel} demonstrated that the scale of initialization decides the transition between rich and kernel regime, and also the recovery of sparse prior for diagonal linear networks. The subsequent works also considered topics such as the connection to Mirror Descent \citep{gunasekar2021mirrorless,azulay2021implicit}, stochastic GD \citep{pesme2021implicit}, and limiting initialization \citep{pesme2023saddle}. Recently, several papers \cite{nacson2022implicit,even2023s,andriushchenko2023sgd} attempted to address the bias of (S)GD under the large step-size regime, nevertheless, they failed to establish the convergence when EoS occurs.

\vspace{-0.4em}
\section{Preliminary and setup} \label{sec: prelim}
\vspace{-0.4em}
\paragraph{Notations.}
We introduce the notations and conventions used throughout the whole paper. Scalars are represented by the simple lowercase letters. We use bold capital like $\mA$ and lowercase letters like $\vv$ to denote matrix and vector variables. For vector $\vv$, let $\|\vv\|_2$ denote its $l_2$-norm and $\vv^p$ denote the coordinate-wise $p$-power. Also, for two vectors $\vu,\vv$ of the same dimension, let $\vu \odot \vv$ denote their coordinate-wise product. Let $\mI$ be the identity matrix. For any integer $n>0$, let $[n] := \{1, \dots, n\}$. We use asymptotic notations $O(\cdot)$, $\Omega(\cdot)$ and $\Theta(\cdot)$ in their standard meanings.
\vspace{-0.4em}
\subsection{Model and Algorithm}
\vspace{-0.3em}
\paragraph{Regression with quadratic parameterization.}
We consider the linear regression task on \emph{single} data point $(\vx, y)$, where $\vx \in \sR^{d}$ and $y \in \sR$. Following \cite{ahn2022learning,song2023trajectory,lu2023benign}, it is instructive to study the one-sample setting because it (1) is sufficiently simple to analyze and (2) demonstrates the key feature under the Edge of Stability regime. This setup is overparameterized when $d \geq 2$ and therefore admits infinitely many linear interpolators $\mBeta$ satisfying $\langle \vx, \mBeta \rangle = y$. We aim to find one of these linear interpolators by minimizing the empirical risk: 
\begin{align} \label{eq: erm}
    \gL(\mBeta) =   l(\langle \vx, \mBeta \rangle - y),
\end{align}
where $l(\cdot)$ is convex, even, and at least twice-differentiable, with the minimum at $l(0) = 0$.

\vspace{-0.2em}
We consider the model where the regression vector $\mBeta$ admits a \emph{quadratic} parameterization
\begin{align}
    \mBeta := \mBeta_{\vw} = \vw_+^2 - \vw_-^2, \quad \vw_{\pm} \in \sR^d, \quad  \vw = \begin{bmatrix} \vw_+\\ \vw_-\end{bmatrix} \label{eq: reparam}\tag{Quadratic Parameterization}
\end{align}
where $\vw$ is the trainable variable. With an abuse of notation we write $ \gL(\vw) := \gL(\mBeta_{\vw})$. In particular, when $l(\cdot)$ is quadratic, the model is also called the {diagonal linear network}, well-studied under a small step-size regime in the past several years \citep{woodworth2020kernel,gunasekar2021mirrorless}.

\vspace{-0.2em}
We minimize the loss function $\gL(\vw)$ by running GD with constant step-size $\eta$: for any $t \in \sN$, it formalizes the following iteration
\begin{align} \label{eq: GD}
    \vw_{t+1} = \vw_t - \eta \nabla_{\vw} \gL(\vw_t).
\end{align}
Unpacking the definition in (\ref{eq: erm}), the 
gradient of the loss function can be written as
\begin{align*}
    \nabla_{\vw} \gL(\vw)  = 2l'(r(\vw)) \cdot \begin{bmatrix}
        + \vx \odot \vw_{+} \\ - \vx \odot \vw_{-} \\
    \end{bmatrix}%\\
    %& \nabla^2_{\vw\vw} \gL(\vw)  = l'(r(\vw)) \cdot \diag([\vx, -\vx]) + l''(r(\vw)) \cdot \begin{bmatrix}
    %    + \vx \odot \vw_{+} \\ - \vx \odot \vw_{-} \\
    %\end{bmatrix} \begin{bmatrix}
    %    + \vx \odot \vw_{+} \\ - \vx \odot \vw_{-} \\
    %\end{bmatrix}^\top 
\end{align*}
where $r(\vw) = \langle \evbeta_{\vw}, \vx\rangle - y$ is referred as the \emph{residual} at $\vw$ on sample $(\vx,y)$. In particular, if $l(a) = \tfrac{a^2}{4} $, its derivatives are simply as $2l'(r(\vw)) = r(\vw)$ and $2l''(r(\vw)) = 1$.

\vspace{-0.4em}
\section{EoS under Quadratic Loss: An Empirical Study} \label{sec: empirical}
\vspace{-0.4em}
In this section, we empirically investigate the EoS convergence of GD on the model in (\ref{eq: erm}) when the $\evbeta$ admits \ref{eq: reparam}. \correction{Rigorously, we define EoS as the phenomena that the sharpness $S_t:=\lambda_{\max}(\nabla^2 \mathcal{L}(\vw_t)$) crosses the threshold of $2/\eta$ for some $t$.}  In particular, we are interested in finding conditions for it to admit EoS when $l$ is a quadratic function, which has been less explored in the existing literature.

\vspace{-0.2em}
We briefly discuss the result from \cite{ahn2022learning} and its relation to our model. The authors considered $(u,v) \mapsto l(uv)$, which can be regarded as a special case of our model when $d = 1$, $\vx = 1$ and $y = 0$ due to linear transformation $(u,v) = (w_++w_-, w_+-w_- )$ that remains invariant under GD. The authors proved the \correction{\emph{necessary}} condition for the model to admit EoS is that the loss function is subquadratic, i.e. there exists $\beta > 0$ such that $\tfrac{l'(a)}{a} \leq 1 - \Theta(|a|^{\beta})$ when $a$ is small
\footnote{The subsequent work \cite{song2023trajectory} extended the result to general $d$, still requiring $l$ to be subquadratic unless non-linear activation is employed. Therefore it is not comparable to this result.}.

\vspace{-0.2em}
However, we \correction{deepen the understanding and provide a \emph{sufficient} condition %bring a different message 
by empirically testing under which conditions EoS occurs even with $l(a) = a^2/4$ when we focus on the one-sample model in (\ref{eq: erm})}. The message is stated in the following claim.
\begin{claim} \label{claim: main-claim}
    Consider the one-sample risk minimization task in (\ref{eq: erm}) with \ref{eq: reparam}. For GD, \correction{it is sufficient} for EoS to occur under properly chosen constant step-size with a quadratic loss $l(s) = s^2/4$ when the following conditions are satisfied: (1) $d \geq 2$, (2) $y \ne 0$ and (3) $\vx = (x_1, \dots, x_d)$ is not degenerated, i.e. $x_i\neq 0$ for any $i$ and there exists at least a pair $x_i\ne x_j$.
\end{claim}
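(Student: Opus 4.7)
The plan is to establish the claim in two halves: a negative direction showing that when any of the three stated conditions fails the dynamics collapse to a regime that is known \emph{not} to admit EoS under quadratic $l$, and a positive direction exhibiting, for the non-degenerate configurations, a concrete initialization and step-size under which the sharpness $S_t$ crosses $2/\eta$.

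For the reduction direction I would argue coordinate by coordinate. If $y=0$, then $\vw=\vzero$ is a minimizer and any balanced initialization $\vw_+ = \vw_-$ propagates $\evbeta_{\vw_t}\equiv\vzero$, so $S_t$ stays at zero; perturbations off the balanced manifold reduce to a decoupled system of scalar equations each isomorphic to Ahn \emph{et al.}'s $(u,v)\mapsto l(uv)$ problem, which does not admit EoS for quadratic $l$. If some $x_i=0$, the $i$-th coordinate decouples from the residual and contributes nothing to $\gL$, allowing removal and iteration of the argument until all coordinates are nonzero. If $d\geq 2$ but $x_1=\cdots=x_d=c\neq 0$, the change of variables $\tilde w_{\pm,i}=\sqrt{|c|}\,w_{\pm,i}$ followed by the symmetric coordinate sums $U_\pm=\sum_i \tilde w_{\pm,i}^2$ shows that $\langle\vx,\evbeta_{\vw}\rangle-y$ depends only on $U_+-U_-$, and GD preserves this one-dimensional subsystem, again reducing to the $d=1$ case ruled out by \cite{ahn2022learning}.

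For the positive direction I would first compute the Hessian directly. Using $2l'(r)=r$ and $2l''(r)=1$, one obtains
\begin{align*}
\nabla^2\gL(\vw)=r(\vw)\,\begin{bmatrix}\diag(\vx)&\vzero\\\vzero&-\diag(\vx)\end{bmatrix}+\vz(\vw)\vz(\vw)^\top,\qquad \vz(\vw)=\begin{bmatrix}\vx\odot\vw_+\\-\vx\odot\vw_-\end{bmatrix}.
\end{align*}
Hence at any interpolator ($r=0$) the sharpness equals $\|\vz(\vw)\|^2=\sum_i x_i^2(w_{+,i}^2+w_{-,i}^2)$. Under conditions (1)--(3), the interpolator manifold $\{\vw:\langle\vx,\vw_+^2-\vw_-^2\rangle=y\}$ is a codimension-one set containing points of arbitrarily large $\|\vz\|^2$, so for any step-size $\eta$ there exist interpolators with sharpness strictly greater than $2/\eta$. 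The remaining task is to show that GD initialized at a small $\alpha\vone$ is steered toward such a sharp neighborhood rather than converging immediately to a flat interpolator. I would do this by tracking the residual $r_t$ and one chosen coordinate pair $(w_{+,i},w_{-,i})$ with $x_i\neq x_j$: heterogeneous $x_i$ break the scalar symmetry, allowing $\|\vz(\vw_t)\|^2$ to grow faster than $r_t$ shrinks, pushing the iterates into the $\eta S_t>2$ region before convergence.

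The main obstacle is the last step — establishing that GD actually enters the high-sharpness regime rather than staying on the GF branch. A rigorous argument requires tracking the two-scale dynamics (slow drift in the interpolator direction, fast oscillation transverse to it), which is essentially the content of Section~\ref{sec: proof-overview} and the $\mu\eta>1$ analysis. For the empirical claim in this section my proposal is therefore to complement the reduction argument with systematic numerical verification: sweep over $(d,\vx,y,\eta,\alpha)$, confirm $\eta S_t>2$ for representative non-degenerate configurations (as in Figure~\ref{fig: overview}), and confirm its absence whenever one of the three conditions is violated, deferring the rigorous convergence statement to the subsequent theoretical sections.
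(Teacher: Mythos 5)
Your overall strategy is essentially the paper's own: Claim~\ref{claim: main-claim} is an \emph{empirical} statement, and the paper justifies it exactly as you propose in your last paragraph --- numerical sweeps (Figure~\ref{fig: condition}), an intuitive reduction of the degenerate case $\vx = x\vone_d$ to the $d=1$ setting of \cite{ahn2022learning}, and deferral of the rigorous sufficiency argument to the $d=2$ analysis of Theorems~\ref{thm: small-ss} and~\ref{thm: large-ss}. Your reduction for constant $\vx$ via the sums $U_\pm=\sum_i \tilde w_{\pm,i}^2$ is a correct and more explicit version of the paper's one-line remark (the GD update multiplies every $w_{+,i}$ by the same factor $1-\eta r c$, so $(U_+,U_-)$ is indeed a closed subsystem), and the observation that zero coordinates are inert and can be dropped is also fine.

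Two points in your added rigor do not hold up. First, the Hessian is off by a factor of two on the rank-one term: with $l(a)=a^2/4$ one gets $\nabla^2\gL(\vw) = r(\vw)\,\mathrm{blkdiag}(\diag(\vx),-\diag(\vx)) + 2\vz\vz^\top$, so the sharpness at an interpolator is $2\sum_i x_i^2(w_{+,i}^2+w_{-,i}^2)$; this does not affect your qualitative conclusion but should be fixed. Second, and more substantively, your $y=0$ argument is wrong as stated: for $d\geq 2$ the coordinates do \emph{not} decouple into independent copies of the $(u,v)\mapsto l(uv)$ problem, because every coordinate's update is driven by the single shared residual $r=\sum_i x_i(w_{+,i}^2-w_{-,i}^2)$. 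The paper itself only supports the role of $y\neq 0$ empirically (fourth column of Figure~\ref{fig: condition}, with a modified unbalanced initialization precisely because the standard one makes $r_t\equiv 0$), and since the claim is phrased purely as a \emph{sufficient} condition, this negative direction is not actually required --- but if you include it, the decoupling step needs to be replaced. Finally, be aware that the existence of arbitrarily sharp interpolators on the solution manifold is only the trivial half of the positive direction; the paper's actual mechanism for why GD reaches $\eta S_t>2$ is the initial-phase blow-up of $a_t$ while $r_t\approx-\mu$ (Lemma~\ref{lemma: initial-phase}) followed by the oscillatory analysis of the $(r_t,s_t)$ system, which you correctly identify as the content you are deferring.
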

\vspace{-0.2em}
We introduce the experimental configurations. For data generation, we sample $\vx \sim \gN(0, \mI_d)$ and $y = \langle \vx, \evbeta^*\rangle$, where $\evbeta^* \sim \text{Unif}(\{\beta \in \tfrac{1}{\sqrt{k}}\{0,\pm1\}^d: \|\bbeta^*\|_0 = k \})$. We use standard \emph{scaling} initialization $\vw_{\pm, 0} = \alpha \vone_d$ where $\alpha > 0$ is a factor. We choose the sparse prior and the scaling initialization because they are important in characterizing the implicit bias of GD under the GF regime, and hence make our results comparable to the results on diagonal linear networks like \cite{woodworth2020kernel}.

\begin{figure}[th]
    \centering
    \includegraphics[width=0.97\textwidth]{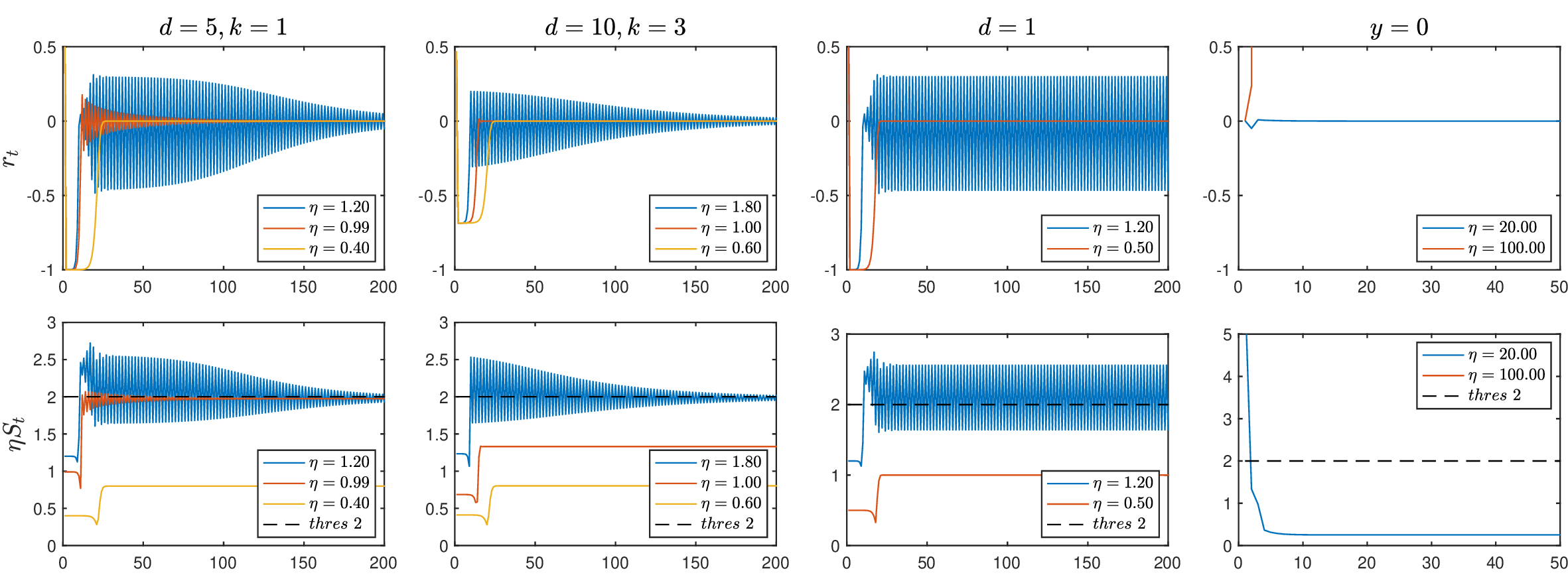}
    \vspace{-0.5em}
    \caption{\small Empirical verification for the Claim~\ref{claim: main-claim}. In the left two columns of plots, we run with configurations that obey Claim~\ref{claim: main-claim} and EoS occurs if we increase step-size. In contrast, we set $d =1$ in the third column and $y =0$ in the fourth column, under these settings GD becomes divergent without triggering EoS when we increase the step-size. Note that we use a modified initialization $\vw_{0,+}=2\alpha\vone$, $\vw_{0,-}=\alpha\vone$ \correction{in the last column ($y = 0$)}, otherwise the $r_t =0$ for any $t$ under the original initialization. }
    \label{fig: condition}
    \vspace{-0.3em}
\end{figure}

\vspace{-0.2em}
%\paragraph{Characterizing the trajectory of EoS and GF.}
Before presenting our empirical evidence, we first explain the major empirical difference between EoS and GF regime via Figure~\ref{fig: overview}. We characterize the convergence of GD via the residual function $r_t = r(\vw_t)$ instead of loss $\gL(\vw_t)$ because the latter does not reflect the sign change. For our setting, the GF regime is featured by the monotonically decreasing of the $|r_t|$ until reaching 0. Besides, $r_t$ remains negative and the sharpness is under the threshold $2/\eta$ for any $t$. On the contrary, in the EoS regime, $r_t$ oscillates and changes its sign as $r_tr_{t+1} < 0$ holds for any $t$ large enough. Also, the sharpness first exceeds $2/\eta$ and decreases until it comes below the threshold. This is similar to the EoS behavior described in \cite{ahn2022learning}. Nevertheless, the major difference between the GF regime and \cite{ahn2022learning} is, that the envelope of $r_t$ does not necessarily shrink under the EoS regime (which we will discuss later). Another distinction from \cite{ahn2022learning} is that the sharpness also oscillates.

\vspace{-0.2em}
We proceed by empirically justifying Claim~\ref{claim: main-claim}. In Figure~\ref{fig: condition} we test if each one of the three conditions in Claim~\ref{claim: main-claim} is relaxed, EoS does not occur and GD diverges when we increase the step-size away from the GF regime. We intuitively explain why these conditions are necessary: the importance of $d > 1$ is predicted by the result of \cite{ahn2022learning}. If (3) does not hold, e.g. $\vx = x \vone_d$ for some $x \neq 0$. The degenerate case reduces to the $d=1$ setting and fails as violating (1).

\begin{figure}[th]
    \centering
    \includegraphics[width=\textwidth]{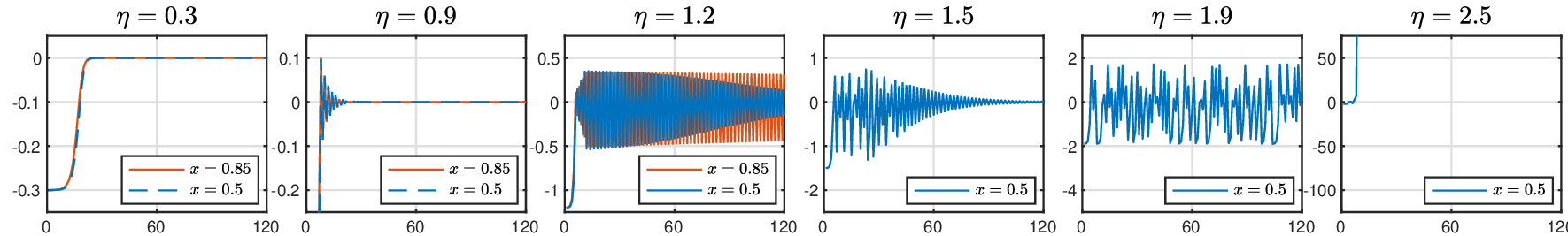}
    \vspace{-0.8em}
    \caption{\small Influence of $\eta$ and different asymptotic properties of $r_t$ along GD trajectory. When we increase the step-size, it displays, from left to right, GF regime, different subregimes of EoS, chaos, and divergence. In particular, when $x$ is larger than some threshold (see Theorem~\ref{thm: large-ss} for details), GD does not converge when $\mu\eta > 1$. Parameter configuration: $\mu = 1$, $\alpha=0.01$. }
    \label{fig: ss}
    \vspace{-0.4em}
\end{figure}
\vspace{-0.2em}
Now suppose all the conditions hold, we further investigate how the choice of step-size, scaling initialization, and other parameters affect the GD trajectory in the EoS regime. We examine how the scale of initialization and step-size might affect the oscillation, sparsity of solution, and the generalization property by testing on the case of $d = 2$ and $1$-sparse prior $\evbeta^* = (\mu, 0)$. The specific setting allows us to characterize it in a more qualitative manner and can be directly compared with theoretical analysis in the next section.

\begin{wrapfigure}{r}{0.6\textwidth}
  \centering
  \vspace{-3mm}
\includegraphics[width=\linewidth]{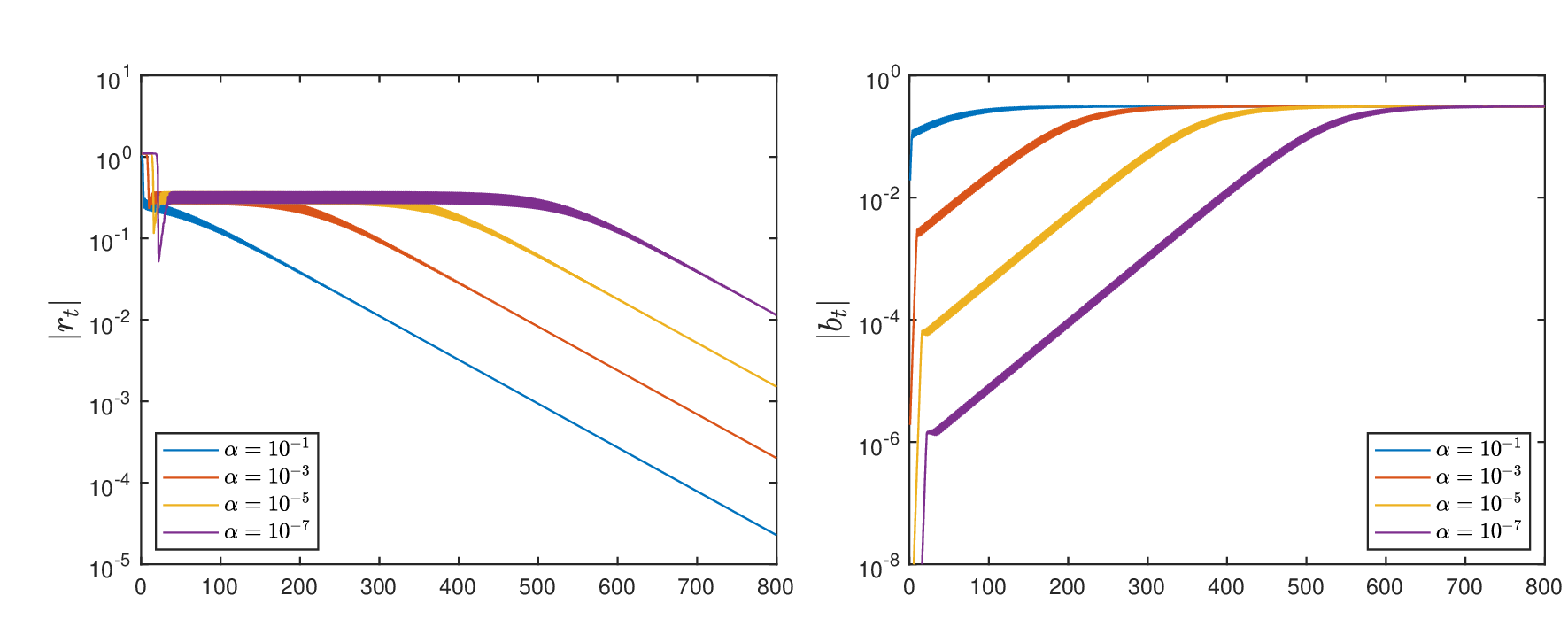}
\vspace{-1.6em}
\caption{\small \correction{$\alpha$ decides the length of the intermediate phase in $\eta\mu > 1$: the gap between the start of oscillation $t_0$ and the start of convergence $\mathfrak{t}$ is proportional to $\log(1/\alpha)$. This is because in the intermediate phase, $r_t$ remains roughly as a constant and causes $b_t$ to increase almost linearly from the scale of $\alpha^{\Theta(1)}$ to $O(1)$. We use $x = 0.5$, $\eta = 1.1$ and $\mu =1$. } }
\label{fig: gap}
\vspace{-0.6em}
\end{wrapfigure}
\vspace{-0.2em}
\paragraph{Effect of step-size and types of oscillation.}
The importance of step-size is not restricted to deciding EoS or GF regime, as illustrated Figure~\ref{fig: ss}. We already describe the transition from GF to EoS and will focus on different \emph{subregimes} of EoS. Another phase transition takes place at $\mu\eta$: if $\mu\eta < 1$, despite oscillating, the envelope of $r_t$ monotonically shrinks as $|r_{t+2}| < |r_t|$ after the initial phase; on the contrary, if $\mu\eta \in (1, \theta)$ ($\theta$ is a constant between (1,2)), the end of the initial phase does not mark the beginning of contraction.
\begin{wrapfigure}{r}{0.6\textwidth}
  \centering
  \vspace{-5mm}

\includegraphics[width=\linewidth]{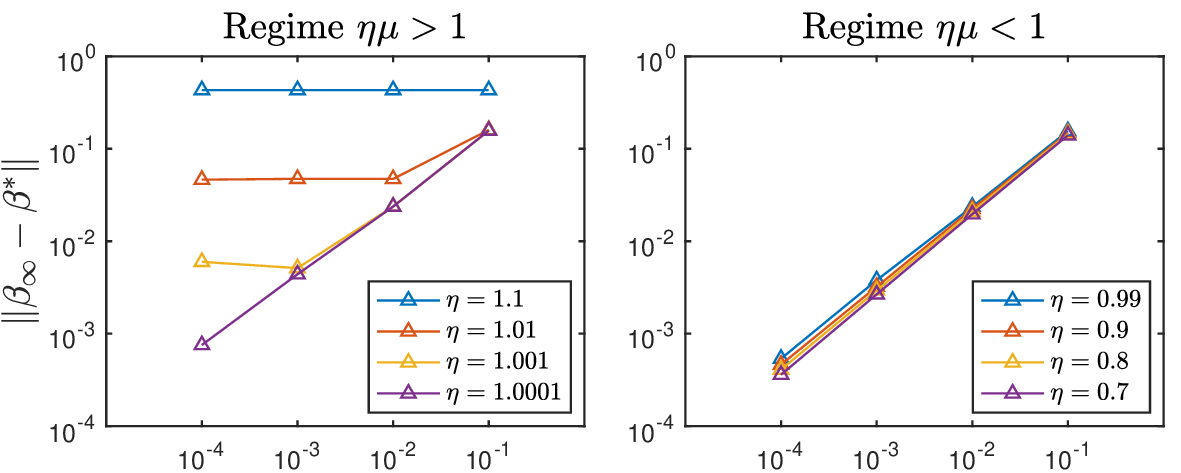}
\vspace{-2em}
\caption{\small Relationship between error $\|\evbeta_{\infty} - \evbeta^*\|$, $\alpha$ and $\eta$: the $x$-axis is $\alpha$ and $y$-axis is the error. The left plot characterizes the error under $\mu\eta > 1$ and the right plot is for regime $\mu\eta < 1$. Rest parameters: $x = 0.5$, $\mu = 1$. \correction{The $x$-axis of both plots are in $\alpha$.} }
\label{fig: alpha}
\vspace{-0.3em}
\end{wrapfigure}
Instead, the envelope will expand until it saturates and reaches a $2$-periodic orbit. This intermediate phase will maintain until another phase transition to the convergence phase occurs.  \correction{Besides, $\alpha$ decides the length of the intermediate phase under the $\eta\mu>1$ regime: the gap between the start of oscillation $t_0$ and the start of convergence $\mathfrak{t}$\footnote{A more detailed discussion of these quantities are reflected in Theorem~\ref{thm: large-ss} in the next section.} empirically obeys $\mathfrak{t} - t_0 \propto \log(1/\alpha)$, as in Figure~\ref{fig: gap}.} Moreover, when $\eta$ is increased above $\theta$, $r_t$ will reach the orbit of a higher period during the intermediate phase. If we further increase $\eta$, the trajectory will finally become chaotic or divergent. 
\vspace{-0.3em}
\vspace{-2mm} 
\paragraph{Effect of $\alpha$, sparsity and generalization error.}
We also care about how $\alpha$ decides the error $\| \evbeta_{\infty} - \evbeta^* \|^2$ where $\evbeta_{\infty}$ is the limit of $\evbeta_{\vw_t}$, as in Figure~\ref{fig: alpha}. We focus on the generalization error under the EoS regime. Similar to the above discussion, it displays different behavior depending on $\eta\mu < 1$ or not. If $\eta\mu < 1$, $\| \evbeta_{\infty} - \evbeta^* \|$ will decrease almost linearly in $\alpha$ and recovers the sparse solution if $\alpha$ takes the limit of $0$. On the contrary, when $\eta\mu > 1$, the error will be decided by two quantities: with $\alpha^2 \ll \mu\eta - 1$, the error $\| \evbeta_{\infty} - \evbeta^* \|$ is solely decided by $\eta$ regardless of the choice $\alpha$ and does not recover the sparse solution even when $\alpha \to 0$. 
\vspace{-0.3em}
\paragraph{Extension to multi-sample setting.}
Our previous investigation focuses on the single-sample setting. Nevertheless, we believe that it is also very important to conduct an empirical study of the more general multiple data points case. 
\begin{figure}[th]
    \centering
    \includegraphics[width=\textwidth]{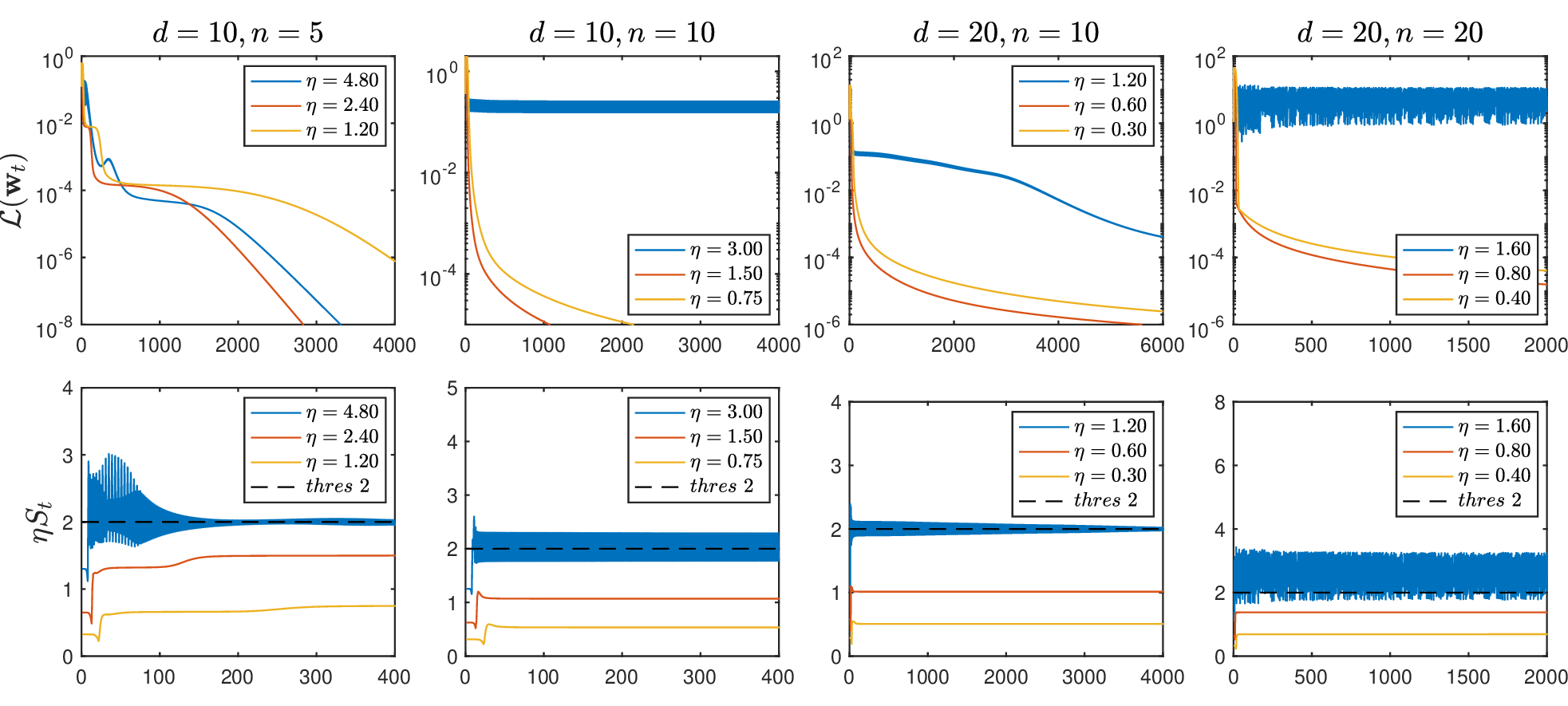}
    \vspace{-1em}
    \caption{\small Empirical verification for the necessity of overparameterization under the multi-sample case. We plot the loss function $\gL(\vw_t)$ and the sharpness of GD when it admits more than one sample. When the model is overparameterized ($d > n$), EoS occurs when we increase the step-size. Otherwise, even with $d = n$, EoS does not occur and GD becomes unconvergent. Rest parameters: $k =3$ and $\alpha = 0.01$.}
    \label{fig: condition-d}
    \vspace{-0.5em}
\end{figure}
Under this setting, the dataset $\{(\vx_i, y_i)\}_{i=1}^n$ has $n$ data point, where $\vx_i \in \sR^d $ and $y_i = \langle \vx_i, \evbeta^* \rangle$. We aim at finding one linear interpolator by running GD over the following empirical risk $\gL(\vw) = \tfrac{1}{n} \sum_{i=1}^n l(\langle \vx_i, \evbeta_\vw\rangle-y_i)$. Also, we use a similar experimental configuration for sparse prior $\evbeta^*$ and initialization $\vw_{0,\pm}$ as in the one-sample case, and the data is generated as $\vx_i \sim \gN(0,\vone_d)$ and $y_i = \langle \vx_i, \evbeta^* \rangle$.

\vspace{-0.5em}
When a quadratic loss $l(\cdot)$ is employed, our empirical results suggest that the following two assumptions are important: (1) the setting is overparameterized, i.e. $d \geq n$ and (2). the setting is not degenerate. This is illustrated in Figure~\ref{fig: condition-d}. In particular, it should be noticed that the overparameterized condition $d \geq n$ reduces to $d \geq 2$ in the one-sample case, which is exactly condition (1) in Claim~\ref{claim: main-claim}. 

%Different from, the sharpness also demonstrates wild oscillation, with the same $2$-period of the GD trajectory.
%The envelope of the sharpness. If we run the GD iteration, the upper edge of the envelope will come below the threshold of $2/\eta$. This will be formally analyzed in the next Section.

%Unlike observed in the single-neuron model in \cite{}, the generalization error is worsened in the EoS regime.

\vspace{-0.4em}
\section{Theoretical Analysis: Bias under One-sample Case} \label{sec: theoretical}
\vspace{-0.4em}
Motivated by the empirical observations in Section~\ref{sec: empirical}, in this section, we aim to provide a theoretical explanation for the convergence of GD under the EoS regime when the loss function is quadratic. Moreover, we characterize its generalization error and compare it with existing results on diagonal linear networks under the GF regime. We begin by presenting the assumptions.
\begin{assumption} \label{asmp: one-sample}
    We make the following assumptions on (\ref{eq: erm}) with \ref{eq: reparam}:
    \begin{enumerate}[(1)., leftmargin=2em]
        \item Suppose $d = 2$. Let $\evbeta^*$ be $1$-sparse as $\evbeta^* = (\mu, 0) \in \sR^d$ with $\mu \neq 0$. The data point $(\vx, y)$ satisfies $\vx = (1, x)$ and $y = \langle \bbeta^*, \vx \rangle = \mu$.
        \vspace{-0.2em}
        \item The loss function $l(\cdot)$ is quadratic, i.e. $l(s) = s^2/4$;
        \vspace{-0.2em}
        \item The initialization is set to be  $\vw_{0,\pm} = \alpha \vone$ with $\alpha > 0$.
    \end{enumerate}
\end{assumption}
\vspace{-0.2em}
We briefly discuss the motivation behind these assumptions. We choose the $1$-sparse prior and the scaling initialization because they are important in characterizing the implicit bias for GD for the diagonal linear model, as mentioned in Section~\ref{sec: empirical}. For the choice of $\vx$, we remark that the form $\vx = (1, x)$ does not compromise the generality and recovers any input vector in $\sR^2$ through rescaling. The rest conditions are required by Claim~\ref{claim: main-claim} and are therefore necessary for ensuring EoS under quadratic loss. 

\vspace{-0.2em}
%\paragraph{Convergence under $d = 2$.} 
We present theorems to characterize the convergence of GD when the model we consider has dimension $d = 2$. It should be remarked that this is the \emph{simplest} setting in which EoS occurs with a quadratic $l(\cdot)$. We provide a theoretical analysis to show that GD will converge to a linear interpolator under the EoS regime by discussing two cases depending on the choice of step-size $\eta$.
\vspace{-0.2em}
\begin{theorem} \label{thm: small-ss}
    Suppose Assumption~\ref{asmp: one-sample} and change of sign $r_tr_{t+1}$ occurs for any $t$ larger than some integer $t_0$. Let $\eta \mu \in (0, 1)$ and $\alpha^2 \leq O(1)$, then the GD iteration in (\ref{eq: GD}) converges with a linear rate to the limit $\evbeta_{\infty}$ as
    \begin{align*}
        |\langle \evbeta_{\vw_t} - \evbeta_{\infty}, \vx \rangle| \leq C_1 \cdot e^{- \Theta(\mu\eta) \cdot (t - t_0)} \cdot |\langle \evbeta_{\vw_{t_0}} - \evbeta_{\infty}, \vx \rangle|.
    \end{align*}
    Moreover, $\| \evbeta_{\infty} - \evbeta^* \| \leq O(\alpha^{C_2})$. $C_1, C_2>0$ are some constants.
\end{theorem}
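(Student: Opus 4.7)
The plan is to reduce the four-dimensional iterate $\vw_t\in\sR^4$ to a three-variable system in the residual $r_t$ and the scales $b_{t,i}:=w_{+,i,t}^2+w_{-,i,t}^2$, exploit the sign-flip hypothesis to obtain a telescoping shrinkage of the scale variable $\sigma_t:=b_{t,1}+x^2 b_{t,2}$, and then identify the limiting scale via a fast/slow balance argument that yields the rate $\Theta(\mu\eta)$.

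\textbf{Reduction.} Because the gradient touches $\vw_\pm$ only through the scalar residual, every coordinate updates multiplicatively as $w_{\pm,i,t+1}=w_{\pm,i,t}(1\mp\eta r_t x_i)$ with $x_1=1,\;x_2=x$. Introducing also $T_t:=\beta_{t,1}+x^3\beta_{t,2}$, direct algebra yields
\begin{align*}
\beta_{t+1,i} &= \beta_{t,i}(1+\eta^2 r_t^2 x_i^2)-2\eta r_t x_i\, b_{t,i},\\
b_{t+1,i} &= b_{t,i}(1+\eta^2 r_t^2 x_i^2)-2\eta r_t x_i\, \beta_{t,i},\\
r_{t+1} &= r_t\bigl(1-2\eta\sigma_t+\eta^2 r_t T_t\bigr).
\end{align*}
A direct computation shows $\lambda_{\max}(\nabla^2\gL(\vw_t))=2\sigma_t+O(r_t)$, so the EoS threshold $\eta\lambda_{\max}>2$ corresponds to $\eta\sigma_t>1$, and the oscillation hypothesis $r_tr_{t+1}<0$ forces $\eta\sigma_t>1/2$ whenever $|r_t|$ is small.

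\textbf{Shrinkage of $\sigma_t$ and $\sum r_t^2<\infty$.} Using $w_{\pm,i,t+2}^2=w_{\pm,i,t}^2(1\mp\eta r_t x_i)^2(1\mp\eta r_{t+1}x_i)^2$ together with $r_tr_{t+1}<0$ (which makes the cross term $\eta^2x_i^2 r_tr_{t+1}$ negative), I prove the two-step multiplier is bounded by $1-c_1\eta^2 r_t^2 x_i^2$ for a constant $c_1>0$, with boundedness of $|r_{t+1}|/|r_t|$ inherited from $\eta\sigma_t$ being confined in a compact subinterval of $(1/2,1)$. Combining with the $\beta$-update yields $\sigma_{t+2}\le\sigma_t-c_2\eta^2 r_t^2\sigma_t$, which telescopes to $\sum_t r_t^2<\infty$; hence $r_t\to 0$ and $\sigma_t\to\sigma_\infty$. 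Once the contraction rate is established in the next step, $\sum_t|r_t|<\infty$ follows from geometric decay, so each $\beta_{t,i}\to\beta_{\infty,i}$ with $\langle\evbeta_\infty,\vx\rangle=\mu$.

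\textbf{Rate $\Theta(\mu\eta)$ and error $O(\alpha^{C_2})$.} Since $r_t\to 0$, the per-step residual multiplier tends to $1-2\eta\sigma_\infty$, so the two-step contraction is $(1-2\eta\sigma_\infty)^2$. To obtain $|1-2\eta\sigma_\infty|=1-\Theta(\mu\eta)$ I carry out a slow-equation analysis: summing the two-step update of $b_{t,1}$ isolates a dissipation $+2\eta^2 r_t^2 b_{t,1}$ against a forcing $-2\eta\mu(r_t+r_{t+1})$ (the latter because $\beta_{t,1}\to\mu$ on-support), and the asymmetry $r_t+r_{t+1}=2(1-\eta\sigma_t)r_t+O(\eta^2 r_t^2 T_t)$ is read off the residual recurrence. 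Integrating this balance from $\sigma_{t_0}\approx 1/\eta$ at the onset of oscillation down to $\sigma_\infty$ matches $\sum_t r_t^2=\Theta(\mu/\eta)$ (since $r_{t_0}=\Theta(\mu)$ in a geometric decay at rate $1-\Theta(\mu\eta)$), which self-consistently pins $1-\eta\sigma_\infty=\Theta(\mu\eta)$; the hypothesis $\eta\mu<1$ keeps $\eta\sigma_\infty\in(1/2,1)$ so the geometric contraction is valid. For the error bound, the products $w_{\pm,i,t}^2=\alpha^2\prod_{s<t}(1\mp\eta r_s x_i)^2$ converge by $\sum_s|r_s|<\infty$, and for the off-support index $i=2$ one has $|\beta_{\infty,2}|=2\alpha^2 e^{-\eta^2 x^2V}|\sinh(2\eta x U)|$ with $U=\sum_s r_s$, $V=\sum_s r_s^2$, which gives $|\beta_{\infty,2}|=O(\alpha^{C_2})$ for some $C_2>0$ controlled by the bounded quantity $\eta x |U|$; interpolation $\beta_{\infty,1}+x\beta_{\infty,2}=\mu$ then yields $|\beta_{\infty,1}-\mu|=O(\alpha^{C_2})$ and the stated bound follows.

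\textbf{Main difficulty.} The genuinely hard step is the quantitative identification $1-\eta\sigma_\infty=\Theta(\mu\eta)$; a pure monotonicity or Lyapunov argument only delivers the qualitative $\eta\sigma_\infty<1$. Obtaining the precise rate demands a two-scale separation between the fast oscillation of $r_t$ and the slow drift of $\sigma_t$, with a careful averaging argument that exposes the $\mu$-dependent non-canceling drift of the scale. Making this rigorous requires controlling the approximation errors (the $O(\eta^2 r_t T_t)$ correction in the residual recurrence, the off-support mass $\beta_{t,2}$, and the drift of $T_t$) uniformly in both the initial growth phase $t<t_0$ and the oscillatory phase $t\geq t_0$, and bootstrapping the rate from a self-consistent balance rather than from a priori contraction.
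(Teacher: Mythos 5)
Your reduction is correct and is in fact a linear reparameterization of what the paper does: the recurrence $r_{t+1}=r_t\bigl(1-2\eta\sigma_t+\eta^2 r_tT_t\bigr)$ is exactly the paper's $r_{t+1}=-(1-\alpha_t)r_t-\beta_tr_t^2$ after the initial phase. The problem is the middle of your argument, which is where all the content lives.

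First, the claimed two-step shrinkage $\sigma_{t+2}\le\sigma_t-c_2\eta^2r_t^2\sigma_t$ does not hold. Writing $S_i=\eta x_i(r_t+r_{t+1})$ and $P_i=\eta^2x_i^2r_tr_{t+1}<0$, the two-step update is $b_{t+2,i}=b_{t,i}\bigl(1+2P_i+S_i^2+P_i^2\bigr)-2S_i(1+P_i)\beta_{t,i}$. The forcing term $-2S_i(1+P_i)\beta_{t,i}$ is \emph{first order} in $r_t+r_{t+1}=2(1-\eta\sigma_t)r_t+O(\eta^2 r_t^2T_t)$ and does not cancel: on the support coordinate ($\beta_{t,1}\approx\mu>0$, $r_t<0$, $r_t+r_{t+1}<0$) it is positive and of size $\Theta\bigl(\eta(1-\eta\sigma_t)|r_t|\mu\bigr)$, which dominates the $O(\eta^2r_t^2)$ dissipation as soon as $|r_t|\ll(1-\eta\sigma_t)/\eta$. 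Indeed $\sigma_t$ is not an independently dissipating quantity at all: since $\beta_{t,1}+x\beta_{t,2}=\mu+r_t$ and $w_-\to0$, one has the identity $\sigma_t\approx\mu+r_t-x(1-x)\beta_{t,2}$, so $\sigma_t$ is slaved to $r_t$ and drifts \emph{toward} $\mu$ rather than telescoping downward. Consequently $\sum_tr_t^2<\infty$ does not follow from your Lyapunov step. Second, you assume $\eta\sigma_t$ is confined to a compact subinterval of $(1/2,1)$ to control $|r_{t+1}|/|r_t|$; that confinement is essentially the statement being proved. Third, you acknowledge that the rate identification $|1-2\eta\sigma_\infty|=1-\Theta(\mu\eta)$ rests on a self-consistent balance that presupposes geometric decay at that very rate in order to evaluate $\sum_tr_t^2$; as written this is circular, and it is the only place where the hypothesis $\eta\mu<1$ enters your argument.

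The paper avoids all of this by choosing a different pair of slow variables: it decomposes $\vw_{\pm}^2$ onto the orthogonal directions $(1,x)$ and $(x,-1)$, so that the component along the null space of $\vx$ (its $b_t$, your $\beta_{t,2}$ up to rotation) is the \emph{only} genuinely free slow coordinate, and it is manifestly non-negative with multiplicative update $b_{t+1}=(1-x\eta r_t)^2b_t$. In those coordinates the residual multiplier is $-(1-\alpha_t)-\beta_tr_t$ with the exact identity $\alpha_t=2-2\eta\mu+2\eta x(1-x)(1+x^2)b_t\ge2(1-\eta\mu)>0$; together with $1-\alpha_t>0$, which follows from the assumed sign change, this gives the two-step contraction factor $(2\eta\mu-1)^2$ in one line, with no averaging, no confinement assumption, and no self-consistency. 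The $O(\alpha^{C_2})$ error bound then follows from $b_{t+2}/b_t\le\exp(2x\eta^2r_t^2)$ and the now-available geometric decay of $r_t$, combined with the bound $b_{t_0}\le\alpha^{2-C}$ coming out of the initial phase (note your own $e^{2\eta x|U|}$ factor is $\alpha^{-\Theta(x)}$, not $O(1)$, since $U=\sum_sr_s$ accumulates $\Theta(\mu\log(1/\alpha^2))$ during the initial phase). If you want to rescue your route, the missing ingredient is precisely the separation of the pinned direction $\beta_{t,1}+x\beta_{t,2}=\mu+r_t$ from the free null-space direction before attempting any monotonicity argument.
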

%\vspace{-0.2em}
\begin{theorem} \label{thm: large-ss}
    Suppose Assumption~\ref{asmp: one-sample}. \correction{Let $\eta \mu \in (1, \min\{\tfrac{3\sqrt{2}-2}{2}, 1+1/(4\mathcal{C})\})$ and $\alpha^2 \ll \mu\eta - 1$, where $\mathcal{C}$ is a certain universal constant. If $x \in (-\tfrac{1}{\mu\eta}, \tfrac{1}{\mu\eta}) \setminus \{0\}$ holds, then there exists some $\mathfrak{t}$ such that, the GD iteration in (\ref{eq: GD}) converges with a linear rate to the limit  $\evbeta_{\infty}$ as
    \begin{align*}
        |\langle \evbeta_{\vw_t} - \evbeta_{\infty}, \vx \rangle| \leq C_3 \cdot e^{- \Theta(\mu\eta-1) \cdot (t - \mathfrak{t})} \cdot |\langle \evbeta_{\vw_{\mathfrak{t}}} - \evbeta_{\infty}, \vx \rangle|.
    \end{align*}
    Moreover, $\| \evbeta_{\infty} - \evbeta^* \| \leq \mathcal{C} \cdot (\mu\eta-1)$. $C_3>0$ are some constants.}
\end{theorem}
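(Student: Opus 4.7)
The plan is to unfold the GD iteration into the per-coordinate recursions $u_{i,t+1} = u_{i,t}(1 - \eta r_t x_i)$, $v_{i,t+1} = v_{i,t}(1 + \eta r_t x_i)$ for $i \in \{1,2\}$ with $x_1 = 1$, $x_2 = x$, and to work with the derived quantities $\beta_{i,t} = u_{i,t}^2 - v_{i,t}^2$, $s_{i,t} = u_{i,t}^2 + v_{i,t}^2$, and $b_{i,t} = u_{i,t} v_{i,t}$. Three identities organize the analysis: the exact recursion $b_{i,t+1} = b_{i,t}(1-\eta^2 r_t^2 x_i^2)$; the two-step relations $\beta_{i,t+2} = \beta_{i,t}(1-\eta^2 r_t^2 x_i^2)^2$ and $s_{i,t+2} = s_{i,t}(1-\eta^2 r_t^2 x_i^2)^2$, valid exactly when $r_{t+1} = -r_t$ and perturbatively otherwise; and the one-step linearization $r_{t+1} = r_t\bigl(1 - 2\eta(s_{1,t} + x^2 s_{2,t})\bigr) + O(r_t^2)$. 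Together these expose a fast/slow decomposition in which $r_t$ is the rapidly oscillating variable and the vector $\va_t := (\beta_{i,t},\, s_{i,t},\, b_{i,t})_{i=1,2}$ is a slowly drifting parameter driving the bifurcation of the $r$-dynamics.

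I will then partition the trajectory into three phases. In Phase 1 ($t \in [0, t_0]$), starting from $\vw_0 = \alpha \vone$ with $r_0 = -\mu$, a sign-tracking induction shows that $r_t$ enters the oscillating regime $r_t r_{t+1} < 0$ after a bounded number of iterations and $|r_t|$ stabilizes near a value $r^* > 0$. In Phase 2 ($t \in [t_0, \mathfrak{t}]$), with $\va_t$ treated as frozen, the induced fast map on $r$ admits a stable period-2 orbit of amplitude $r^*(\va_t)$; meanwhile the slow variable $\va_t$ drifts geometrically in log-scale, since the two-step multipliers $(1-\eta^2 r_t^2 x_i^2)^2$ governing the evolution of $\beta_i, s_i, b_i$ are bounded away from $1$. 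Integrating this drift from the initial log-scale $\log|b_{i,0}| = 2\log\alpha$ up to an $O(1)$ threshold yields $\mathfrak{t} - t_0 = \Theta(\log(1/\alpha))$, matching Figure~\ref{fig: gap}. In Phase 3 ($t \geq \mathfrak{t}$) the period-2 orbit has been destabilized (a reverse period-doubling of the 2-step map), and the linearization near $r = 0$ produces oscillating contraction at per-step rate $|1 - 2\eta(s_{1,\infty} + x^2 s_{2,\infty})| = 1 - \Theta(\eta\mu-1)$, delivering the claimed exponential decay.

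The main obstacle lies in Phase 2, where the bifurcation parameter $\va_t$ varies along the trajectory rather than being fixed, so standard bifurcation theorems for parameterized maps do not directly apply. I will handle this via a two-scale expansion, writing $r_t = \bar r(\va_t) + \tilde r_t$ where $\bar r$ is the instantaneous period-2 orbit at frozen $\va_t$, controlling $\tilde r_t$ through a discrete Gronwall-type inequality, and deriving a reduced slow system for $\va_t$ that is shown to cross the bifurcation threshold monotonically at time $\mathfrak{t}$. The three upper bounds $\eta\mu < (3\sqrt{2}-2)/2$, $\eta\mu < 1 + 1/(4\mathcal{C})$, and $|x| < 1/(\eta\mu)$ are used precisely to guarantee that (i) the orbit in Phase 2 has period exactly 2 (no higher-period cascade or chaos), (ii) the slow drift of $\va_t$ is monotone toward the convergence regime, and (iii) the limiting contraction factor in Phase 3 is strictly inside $(-1, 0)$ with magnitude bounded away from $1$ by $\Theta(\eta\mu-1)$.

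Finally, $\|\evbeta_\infty - \evbeta^*\|$ is bounded by combining the interpolator condition $\beta_{1,\infty} + x\beta_{2,\infty} = \mu$, the EoS stability identity $s_{1,\infty} + x^2 s_{2,\infty} = 1/\eta - \Theta((\eta\mu-1)/\eta)$ forced by the computed Phase-3 contraction rate, the elementary inequalities $s_{i,\infty} \geq |\beta_{i,\infty}|$, and the sign information on $b_{i,\infty}$ inherited from Phases 2--3. With $\vx = (1,x)$ and $\evbeta^* = (\mu,0)$, these reduce to a two-variable linear system in $(\beta_{1,\infty}, \beta_{2,\infty})$ whose right-hand side is of order $\eta\mu-1$, yielding $\beta_{2,\infty} = O(\eta\mu-1)$ and $\beta_{1,\infty} - \mu = -x\beta_{2,\infty} = O(\eta\mu-1)$, hence the stated bound with the constant $\mathcal{C}$ read off explicitly from these relations.
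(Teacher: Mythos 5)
Your three-phase skeleton (initial phase, expanding oscillation, contraction after a transition time $\mathfrak{t}$), and the reduction to per-coordinate products with a fast variable $r_t$ driven by slow ones, do mirror the paper's strategy, and the algebraic identities in your first paragraph are correct. But the two load-bearing steps of the theorem are not established by your plan, and one of them rests on a false identity.

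First, the existence of the phase transition. You assert that the slow vector $\va_t$ ``is shown to cross the bifurcation threshold monotonically,'' and you assign the hypothesis $|x|<1/(\eta\mu)$ to controlling the Phase-3 contraction factor. This misses where that hypothesis actually does its work. Under your own approximation $r_{t+1}\approx -r_t$, the two-step multipliers $(1-\eta^2 r_t^2 x_i^2)^2$ are \emph{less} than $1$, so the leading-order drift of the slow variables points the wrong way; whether the transverse component actually grows toward the threshold is governed entirely by the $O(r_t^2)$ asymmetry of the oscillation, and there is no monotonicity to lean on. The paper does not prove monotone drift at all: it reduces to a planar map $(r,s)\mapsto(g(r,s),h(r,s))$ in which $s$ is the genuinely slow transverse coordinate, shows the orbit is trapped in a bounded region (via a comparison sequence $\rho_{t+1}=g(\rho_t,0)$, which is where the cap $\eta\mu\le\tfrac{3\sqrt2-2}{2}$ enters), and then shows by an explicit symbolic computation that for $|x|<1/(\eta\mu)$ the map has \emph{no} $2$-periodic point in the region $s\in(0,\mu\eta-1)$ where contraction fails --- whereas for $|x|>1/(\eta\mu)$ such an orbit exists and the trajectory can lock onto it forever. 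The crossing is forced by this exclusion argument, not by a monotone reduced flow; your proposal contains no substitute for it.

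Second, your Phase-3 rate and the bias bound both rest on the ``EoS stability identity'' $s_{1,\infty}+x^2 s_{2,\infty}=1/\eta-\Theta((\eta\mu-1)/\eta)$, i.e.\ limiting sharpness just below $2/\eta$. This cannot hold: $s_{1,\infty}\ge|\beta_{1,\infty}|=\mu-O(|x|(\mu\eta-1))>1/\eta$ whenever $\eta\mu>1$, so $s_{1,\infty}+x^2s_{2,\infty}$ sits strictly \emph{above} $1/\eta$ by $\Theta((\mu\eta-1)/\eta)$. The source of the error is that $s_{1,t}+x^2s_{2,t}$ is not a legitimate bifurcation parameter --- it contains a component proportional to $r_t$ itself (since $\beta_{1,t}+x\beta_{2,t}=r_t+\mu$ and $s_{1,t}\approx\beta_{1,t}$ once $\vw_-$ has collapsed), so reading the stability multiplier off your one-step coefficient double-counts a fast term. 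After extracting that piece, the correct multiplier is $-(2\eta\mu-1-2s_t)$ with $s_t$ proportional to the transverse coordinate $b_t$, and contraction requires $s_t>\mu\eta-1$; moreover near the $2$-orbit amplitude $|r_t|=\Theta(\sqrt{\mu\eta-1}/\eta)$ the quadratic term is \emph{not} lower order relative to the $\Theta(\mu\eta-1)$ linear gap, which is why the paper proves contraction through a signed two-step inequality $r_{t+2}\ge(1-\alpha_t)(1-\alpha_{t+1})r_t$ rather than a one-step linearization. The bound $\|\evbeta_\infty-\evbeta^*\|\le\mathcal{C}(\mu\eta-1)$ then follows from an upper bound on the limiting transverse coordinate, not from the linear system you set up, whose right-hand side is derived from the invalid identity.
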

%\begin{theorem} [Informal] \label{thm: main-informal}
%    Suppose Assumption~\ref{asmp: one-sample}. Let $\eta \mu \in (0, 1)$ and $\alpha^2 \ll O(1)$, then GD iteration described in Eq.(\ref{}) converges with a linear rate to the limit  $\evbeta_{\infty}$ which satisfies
%    \begin{align*}
%        \langle \vx, \evbeta_{\infty} \rangle = y, \qquad \text{and} \qquad \| \evbeta_{\infty} - \evbeta^* \| \leq O(1).
%    \end{align*}
%    Otherwise, let $\eta \mu \geq 1$  and $\alpha^2 \ll O(1)$, if $x \in (-\tfrac{1}{\mu\eta}, \tfrac{1}{\mu\eta}) \setminus \{0\}$ holds, then under proper conditions, there exists some certain $\mathfrak{t}$ such that for any $t \geq \mathfrak{t}$, it holds that
%    \begin{align*}
%        |\langle\evbeta_{\vw_t}, \vx \rangle - y| \leq  e^{-\Theta(\mu\eta - 1) \cdot (t - \mathfrak{t})} \cdot   |\langle\evbeta_{\vw_\mathfrak{t}}, \mX \rangle - y|,
%    \end{align*}
%    and also $\| \evbeta_{\infty} - \evbeta^* \| \leq O\big(\tfrac{\mu\eta-1}{\eta}\big)$. Nevertheless, if $|x| > \tfrac{1}{\eta\mu}$ holds, the GD iteration does not converge to the stationary point.
%\end{theorem}
\vspace{-0.2em}
The above theorems indicate that under both $\mu\eta \in (0, 1)$ and $\mu\eta \in (1, \tfrac{3\sqrt{2}-2}{2})$, if EoS occurs, we can establish linear convergence of $\evbeta_{\vw_t}$ to their respective limits $\evbeta_{\infty}$, which are linear interpolators for the one-sample $(\vx, y)$. Nevertheless, they are different in many perspectives, as whether $\eta\mu$ exceeds $1$ decides some key distinct features of the oscillation trajectory. We will explain in the following remarks. 

\vspace{-0.2em}
\correction{First, in the first theorem we require that a change of sign occurs. %they are based on very different motivations. 
This is because, when $\eta\mu < 1$, GD could enter either EoS or GF regime depending on the exact value of $\eta$. Instead of identifying an exact threshold between GF and EoS (which can be very hard and purely technical), we simply employ this condition in Theorem~\ref{thm: small-ss} to rule out the possible choices of $\eta$ that lead to the GF regime and focus on the ones that lead to oscillating EoS regime. %On the contrary, as we see from Section~\ref{sec: empirical}, if $\eta\mu > 1$, EoS and change of sign always occur, and we use the condition here to assert this observation\footnote{We believe the condition $r_tr_{t+1} < 0$ can be rigorously proved, nevertheless, it can be again very technical and does not give more insight to the mechanism of EoS.}. 
}

\vspace{-0.2em}
Second, despite both $t_0$ and $\mathfrak{t}$ being markers for the beginning of linear convergence, they differ significantly in nature. As explained in Section~\ref{sec: empirical}, under both conditions, GD experiences a short initial phase, and its end is marked by $t_0$, which consequently also marks the beginning of convergence in $\mu\eta < 1$. On the contrary, in $\eta\mu > 1$, the envelope is not guaranteed to shrink after $t_0$. And the phase transition to the third convergence phase is marked by $\mathfrak{t}$ if the assumption $|x| \in (0, \tfrac{1}{\mu\eta})$ is met.

\vspace{-0.2em}
Moreover, we discuss generalization error by bounding $\| \evbeta_{\infty} - \evbeta^* \| $. When $\eta\mu \leq 1$, the generalization bound at infinity $\| \evbeta_{\infty} - \evbeta^* \|$ is dominated by $\alpha^{\Theta(1)}$, which vanishes and recovers the sparse prior as $\alpha$ approches $0$, similar to the implicit-bias analysis under GF regime in \cite{woodworth2020kernel}. Nevertheless, when $\eta\mu$ exceeds $1$, Theorem~\ref{thm: large-ss} suggests that the error will depend on gap $\mu\eta-1$ when $\alpha$ is small, and therefore does not recover the sparse solution when $\alpha\to0$. This theoretical analysis thus matches the experimental observations in Section~\ref{sec: empirical}.

\vspace{-0.2em}
Lastly, we remark that in Theorem~\ref{thm: large-ss}, the choice of upper bound $\tfrac{3\sqrt{2}-2}{2} \approx 1.12$ for $\eta\mu$ is purely artificial and comes primarily from the technical reasons. As discussed in Section~\ref{sec: empirical}, if $\eta$ is further increased from it, the envelope of $r_t$ will reach an orbit with periodicity longer than $2$. The transition from $2$-cycle to longer cycle does not occur at $\eta\mu = \tfrac{3\sqrt{2}-2}{2}$. We choose this value because it can be proved that it guarantees a $2$-orbit and hence simplifies the proof. We next present a result showing the convergence when $\eta\mu \in(1, \tfrac{3\sqrt{2}-2}{2})$ is not necessary, though it comes at the cost of very restrictive assumptions.
%\vspace{-0.2em}
\begin{proposition} \label{prop: even-larger-ss}
    Suppose Assumption~\ref{asmp: one-sample}. Let $\eta \mu > 1$ and $x \in (-\tfrac{1}{\mu\eta}, \tfrac{1}{\mu\eta}) \setminus \{0\}$. If the GD iteration is not diverging or becoming chaotic, then it converges to a linear interpolator $\evbeta_{\infty}$ when $t$ goes to infinity.
\end{proposition}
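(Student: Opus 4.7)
The plan is to argue via the $\omega$-limit set of the GD map $T$, combined with the structural constraints imposed by $|x| < 1/(\mu\eta)$, replacing the quantitative bounds on $\eta\mu$ and $\alpha$ used in Theorem~\ref{thm: large-ss}. First, by non-divergence the trajectory $\{\vw_t\}$ lies in a compact subset of $\sR^{2d}$, so the $\omega$-limit set $\Omega := \bigcap_{T \ge 0} \overline{\{\vw_s : s \ge T\}}$ is non-empty, compact, and invariant under $T$. I would interpret \emph{non-chaotic} as the statement that $\Omega$ is a periodic orbit (possibly a single fixed point), so the goal reduces to showing $\Omega$ consists of fixed points with $r = 0$, i.e.\ linear interpolators.

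Second, I would classify fixed points. Any fixed point $\vw^*$ of $T$ satisfies $r(\vw^*)\,x_i\,a_i^* = 0$ and $r(\vw^*)\,x_i\,b_i^* = 0$ for $i = 1, 2$, so either $r(\vw^*) = 0$ (a linear interpolator) or $\vw^* = \vzero$. Under $\eta\mu > 1$, the linearization $T'(\vzero) = \mI + \eta\mu\,\diag(1,-1,x,-x)$ has an eigenvalue $1 + \eta\mu > 2$, so $\vzero$ is an unstable fixed point and cannot be approached by any trajectory starting from $\vw_0 = \alpha\vone \neq \vzero$. Hence every fixed point in $\Omega$ is a linear interpolator.

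Third, I would rule out non-trivial periodic orbits of period $k \ge 2$. On the generic branch with all $a_{i,s}^*, b_{i,s}^* \neq 0$ along the orbit, periodicity in $\vw$ forces
\[
\prod_{s=0}^{k-1}(1 - \eta r_s^* x_i) = 1 \quad \text{and} \quad \prod_{s=0}^{k-1}(1 + \eta r_s^* x_i) = 1, \qquad i = 1, 2,
\]
whose product for $i = 2$ gives $\prod_{s=0}^{k-1}(1 - \eta^2 (r_s^*)^2 x^2) = 1$. Using $|x| < 1/(\mu\eta)$ together with a uniform bound $|r_s^*| \le \mu$ on the limit orbit (extracted from the boundedness of $\{\vw_t\}$ and the explicit representation $a_{i,t}^2 = \alpha^2 \prod_{s < t}(1 - \eta r_s x_i)^2$ and its counterpart for $b_{i,t}^2$), one obtains $|\eta r_s^* x| < 1$, so every factor in the product lies in $(0, 1]$; the product equals $1$ only when every factor equals $1$, forcing $r_s^* = 0$ for all $s$ and contradicting non-triviality. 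Degenerate sub-cases where some coordinate of the orbit vanishes reduce to a lower-dimensional version of the dynamics on the surviving coordinates, where the same argument applies after restriction.

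Finally, combining the previous steps, every point of $\Omega$ satisfies $r = 0$, and by continuity $r_t \to 0$ as $\vw_t$ approaches $\Omega$. For sufficiently large $s$ one has $|1 - \eta^2 r_s^2 x_i^2| \le 1$, and boundedness of $a_{i,t}^2 b_{i,t}^2$ combined with $\log(1 - \eta^2 r_s^2 x_i^2) \approx -\eta^2 r_s^2 x_i^2$ yields $\sum_s r_s^2 < \infty$. Then $a_{i,t}^2 = \alpha^2 \prod_{s<t}(1 - \eta r_s x_i)^2$ converges to a finite limit (and similarly for $b_{i,t}^2$), so $\evbeta_{\vw_t} \to \evbeta_\infty$ with $\langle \evbeta_\infty, \vx \rangle = y$. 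The hardest part is step three: the uniform bound $|r_s^*| \le \mu$ on the limit orbit is not immediate from boundedness of $\{\vw_t\}$ alone and will require carefully tracking how the conserved product factors $\prod_{s}(1 - \eta r_s x_i)^2$ and $\prod_{s}(1 + \eta r_s x_i)^2$ evolve from the specific positive initialization $\vw_{\pm,0} = \alpha\vone$, together with the non-chaotic hypothesis to transfer these bounds from finite-time to the limit orbit.
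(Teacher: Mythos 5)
Your high-level strategy (classify the $\omega$-limit set, kill non-interpolating fixed points, kill non-trivial periodic orbits) parallels the paper's, which also argues that a bounded, non-chaotic trajectory must land on a stable fixed point or periodic cycle and then eliminates the latter. But your key step three has a genuine gap exactly where the difficulty lives. Your product argument pairs the $+$ and $-$ coordinates to get $\prod_{s}(1-\eta^2 (r_s^*)^2 x_i^2)=1$, which requires \emph{both} $w_{+,i}$ and $w_{-,i}$ to be nonzero along the limit orbit. However, the dynamics from the initialization $\vw_{0,\pm}=\alpha\vone$ drives the ``minus'' components (the quantities $a'_t, b'_t$ in the paper's decomposition) to zero — this is precisely Lemma~\ref{lemma: initial-phase} — so the $\omega$-limit set sits in the degenerate stratum where those coordinates vanish identically. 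There your pairing collapses: the only surviving conditions are $\prod_s(1-\eta r_s^* x_i)^2=1$ for $i=1,2$, and these do \emph{not} force $r_s^*\equiv 0$ (already for $k=2$ the branch $(1-\eta r_0^*)(1-\eta r_1^*)=-1$ admits non-trivial solutions). Saying ``the same argument applies after restriction'' is therefore false; the restricted problem is the hard one. The paper handles it by explicitly computing \emph{all} $2$-periodic points of the reduced two-dimensional map $(g,h)$ (Lemma~\ref{lemma: fixed-orbit}, via symbolic computation of the fixed-point system) and verifying that none lie in the relevant domain precisely when $|x|<1/(\mu\eta)$ — this is where the hypothesis on $x$ actually enters, and it is not recoverable from your telescoping identity.

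Two further points. First, you concede that the uniform bound $|r_s^*|\le\mu$ on the limit orbit is unproven; without it the factors $1-\eta^2(r_s^*)^2x^2$ need not lie in $(0,1]$ and the product can equal $1$ with negative factors, so even on the generic stratum the argument is incomplete (the one-sided bound $r\ge-\mu$ follows from positivity of $a_t,b_t$, but the upper bound does not). Second, instability of the fixed point $\vw^*=\vzero$ (a saddle: $T'(\vzero)$ also has eigenvalues $1-\eta\mu x_i$ of modulus less than $1$) does not by itself rule out convergence to it along a stable manifold; you need an additional argument there. The paper's route — reduce to the planar system $(r_t,s_t)$, classify its fixed points and their stability by the Jacobian (Lemma~\ref{lemma: stable}), and exclude periodic orbits in the region $s\le\mu\eta-1$ by direct computation — trades your generality for explicit control of the degenerate stratum, which is what the proof actually requires.
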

\vspace{-0.2em}
We provide an overview of our proof technique in the next section, with a toy example to better explain our method. For the complete proof, please refer to the Appendix. We hope our analysis can be extended to any $d \geq 2$ in future works. However, we emphasize this does not diminish the significance of our analysis, since the settings under different $d$ share EoS patterns and asymptotic behavior, and the technical barrier comes from linear algebra limitations.

\vspace{-0.4em}
\section{Proof overview} \label{sec: proof-overview}
\vspace{-0.4em}
In this section, we discuss the idea and technique used in our proof. In specific, we introduce a toy model--a nonlinear system with unstable convergent dynamics, similar to the behavior of EoS. 
\vspace{-0.4em}
\subsection{A toy model}
\vspace{-0.3em}
We consider variable $r_t \in \sR$ and the following iteration of a nonlinear system with initialization $r_0$: 
\begin{align} \label{eq: toy-model}
    r_{t+1} = - (1 - \alpha_t) \cdot r_t - \beta_t \cdot r_t^2,
\end{align}
where $\alpha_t, \beta_t \in \sR$ are time-dependent inputs and we will assume $|\alpha_t| \leq 1$ for any $t \in \sN$. We are concerned about the convergence of $|r_t|$ to zero when $t$ goes to infinity. Empirical results in Figure~\ref{fig: toy-model} indicate that (\ref{eq: toy-model}) displays different regimes depending whether $\alpha_t < 0$ or $\alpha_t > 0$.
\vspace{-0.3em}\paragraph{Constant $\alpha$.} 
We begin with a simple setting where $\alpha_t, \beta_t$ are both constants, as $\alpha$ and $\beta > 0$. When $\alpha > 0$, $r_t$ alternates signs in each iteration and its envelope decreases, displaying damped oscillation. The convergence is formally established in the subsequent lemma. We remark that these lemmas are employed to explain our proof ideas, so we use proper assumptions to simplify their proof.
\begin{lemma}\label{lemma: toy-positive}
    Let $\beta = 1$ and $\alpha \in (0, 1)$. Suppose with some proper initialization $r_0$, $r_t r_{t+1} < 0$ holds for any $t \in \sN$. Then for any $r_t$ with $r_t < 0$, it satisfies  $|r_{t+2}| < (1 - \alpha)^2 \cdot |r_t|$ and hence the iteration admits limit $\lim_{t\to\infty} r_t = 0$.
\end{lemma}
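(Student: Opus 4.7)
The plan is to track the ratio $|r_{t+2}|/|r_t|$ through two consecutive applications of the recursion and bound it by $(1-\alpha)^2$. With $\beta=1$ and $\alpha_t\equiv\alpha$, the map reads $r_{t+1}=-(1-\alpha)r_t-r_t^2$. The first step is to invoke the sign-change hypothesis: if $r_t<0$, then $r_{t+1}>0$, which forces $|r_t|<1-\alpha$ (otherwise $-(1-\alpha)r_t-r_t^2\leq 0$). Under this constraint I can write
$$r_{t+1}=|r_t|\bigl((1-\alpha)-|r_t|\bigr)>0.$$

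Next I would feed $r_{t+1}>0$ back into the map: both summands on the right-hand side are now negative, so $|r_{t+2}|=r_{t+1}\bigl((1-\alpha)+r_{t+1}\bigr)$. Substituting the formula for $r_{t+1}$ and abbreviating $s:=|r_t|$, $c:=1-\alpha$, a direct expansion yields
$$\frac{|r_{t+2}|}{|r_t|}=(c-s)\bigl(c+s(c-s)\bigr)=c^2-s\bigl(c\alpha+2cs-s^2\bigr).$$

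The core of the argument, and the step I expect to be the only subtle point, is showing that the bracketed quantity $c\alpha+2cs-s^2$ is strictly positive whenever $s\in(0,c)$. Completing the square rewrites this as $(s-c)^2<c^2+c\alpha$, and here one must exploit the identity $c+\alpha=1$ to simplify the right-hand side to $c$. Since $0<s<c<1$, one has $(c-s)^2<c^2\leq c$, which closes the bound and delivers $|r_{t+2}|<(1-\alpha)^2|r_t|$.

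Finally, iterating this inequality along the subsequence of negative-indexed $r_t$'s yields the geometric decay $|r_{t_0+2k}|<(1-\alpha)^{2k}|r_{t_0}|\to 0$, while the formula for $r_{t+1}$ produces the analogous bound $|r_{t+1}|<c|r_t|$ for the alternating sign; the two together give $\lim_{t\to\infty}r_t=0$. No further complications are expected: the proof hinges almost entirely on spotting that the two-step composition admits the clean factorization above, after which completing the square together with the identity $c+\alpha=1$ does all the work.
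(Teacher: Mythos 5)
Your proof is correct, and it reaches the bound by a route that differs in mechanism from the paper's. The paper keeps $r_{t+1}$ as a separate quantity and rearranges the two-step composition into
\begin{align*}
r_{t+2} = (1-\alpha)^2 r_t - \alpha\, r_t r_{t+1} - r_t^2 (r_t - r_{t+1}),
\end{align*}
after which the hypothesis $r_t<0$, $r_{t+1}>0$ makes both correction terms positive and gives $r_{t+2} > (1-\alpha)^2 r_t$ purely from sign information, with no magnitude bound on $r_t$ needed. You instead eliminate $r_{t+1}$ entirely, reduce the two-step ratio to the one-variable polynomial identity $|r_{t+2}|/|r_t| = c^2 - s\bigl(c\alpha + 2cs - s^2\bigr)$ with $s=|r_t|$, $c=1-\alpha$, and close it by completing the square together with $c+\alpha=1$; this requires the extra (correctly derived) observation that the sign change forces $s<c$. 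Both arguments are sound. The paper's is shorter and more robust to perturbations of the map (which is why its structure reappears in Lemma~\ref{lemma: two-iter-lower-bound} for the time-varying case); yours is more explicit and delivers two facts the paper only gestures at, namely that $r_{t+2}<0$ follows automatically once $r_{t+1}>0$, and the one-step contraction $|r_{t+1}|<(1-\alpha)|r_t|$ used to transfer convergence to the positive subsequence. The only implicit step worth stating is that $s>0$, which follows since $r_t=0$ would violate $r_tr_{t+1}<0$.
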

\begin{figure}[th]
    \centering
    \includegraphics[width=0.93\textwidth]{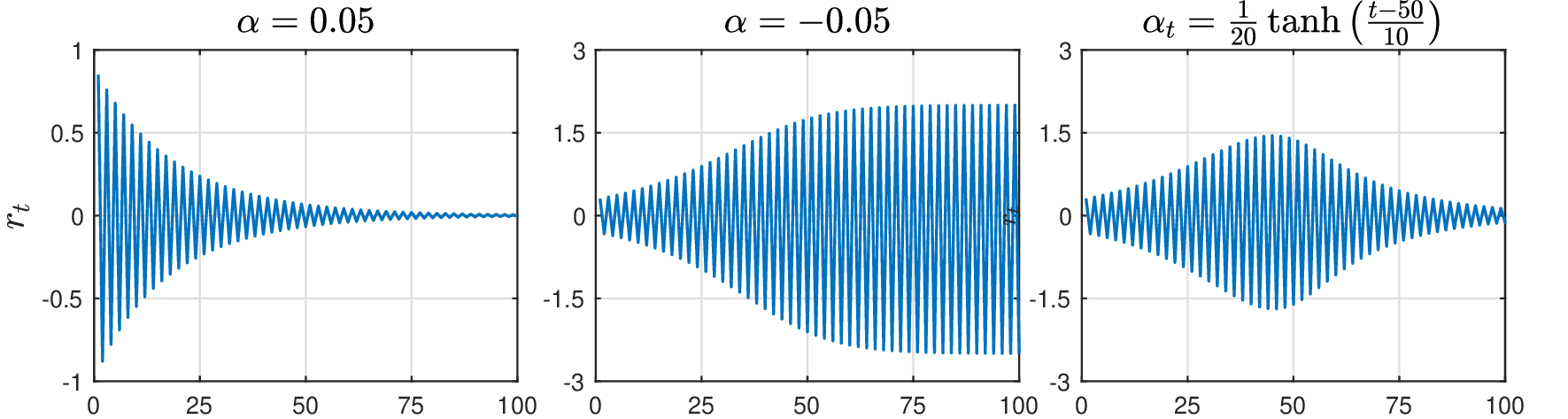}
    \caption{\small Toy model dynamics of $r_t$ in (\ref{eq: toy-model}) under different regimes. The left and the middle plots utilize constant $\alpha$ with different signs and correspond to the \emph{oscillating contracting} and the \emph{expanding} regimes. The right plot uses varying $\alpha_t = \tfrac{1}{20} \cdot \tanh(\tfrac{t-50}{10})$ , which exhibits a phase transition when $\alpha_t$ crosses 0. In all the plots we use $\beta = 0.1$ and $r_0 = -0.3$. }
    \label{fig: toy-model}
    \vspace{-0.5em}
\end{figure}
Instead, when $\alpha < 0$, the iteration does not necessarily converge to zero. Nevertheless, with a proper initialization $r_0$, the sequence of $r_t$ oscillates and its envelope expands until reaching a certain value, as shown in Figure~\ref{fig: toy-model}. The envelope-expanding behavior is very similar to the intermediate phase of GD under step-size $\eta\mu>1$ mentioned in Section~\ref{sec: empirical}. This and the limit points are characterized in the following lemma.
\vspace{-0.3em}
\begin{lemma}\label{lemma: toy-negative}
    Let $\beta = 1$ and $\alpha = -a$, with $a\in (0, 1)$ and define $$r_+ = \tfrac{1}{2}(\sqrt{a^2 +4a}-a) > 0, \qquad r_- = \tfrac{1}{2}(-\sqrt{a^2 +4a}-a) < 0.$$ Then for any $r_{t}$ in (\ref{eq: toy-model}) with $r_t \in [r_-,r_+]$, it holds that $|r_{t+2}| > |r_t|$ and $r_tr_{t+1}<0$. Also, consider the subsequence of $r_t$'s being positive and negative. Then the two subsequences admit limit as
    $r_+$ and $r_-$.
\end{lemma}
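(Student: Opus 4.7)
The plan is to exploit the 2-cycle structure of $f(r) := -(1+a)r - r^2$. The starting observation, verified by direct substitution, is that $r_+$ and $r_-$ form a 2-cycle of $f$: $f(r_+) = r_-$ and $f(r_-) = r_+$. Accordingly, $g := f\circ f$ has four fixed points in $\sR$, namely $0$, $r_+$, $r_-$, and $-(2+a)$ (the last being the non-trivial fixed point of $f$). Because $g(r) - r$ is a degree-$4$ polynomial with leading coefficient $-1$, this yields the clean factorization
\begin{equation*}
    g(r) - r \;=\; -r\,(r - r_+)(r - r_-)(r + 2 + a),
\end{equation*}
which can be verified by matching coefficients and using the Vieta identities $r_+ + r_- = -a$ and $r_+ r_- = -a$.

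Both assertions of the first sentence of the lemma then fall out of the above factorization by a sign count. For the sign change $r_t r_{t+1} < 0$, rewrite $r_{t+1} = -r_t(1 + a + r_t)$ and observe that $r_t \in [r_-, r_+]$ implies $1 + a + r_t > 0$; this reduces to $r_- > -(1+a)$, equivalently $\sqrt{a^2 + 4a} < 2 + a$, which holds for every $a > 0$. For the expansion $|r_{t+2}| > |r_t|$: on $r \in (0, r_+)$ the four factors $-r,\; r - r_+,\; r - r_-,\; r + 2 + a$ carry signs $(-,-,+,+)$, so $g(r) - r > 0$ and hence $g(r) > r > 0$; on $r \in (r_-, 0)$ the signs are $(+,-,+,+)$, so $g(r) < r < 0$. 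In either case $|g(r)| > |r|$ as claimed.

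For the convergence of the two subsequences, I would focus on the positive one, $s_n := r_{2n}$ (the negative one is handled symmetrically). Setting $s_{n+1} = g(s_n)$, the expansion above gives $s_{n+1} > s_n$. To upgrade this to monotone convergence, I must also show that $g$ sends $(0, r_+]$ into itself, which I would achieve by proving $g$ is monotonically increasing on $[0, r_+]$ via the chain rule $g'(r) = f'(f(r))\,f'(r)$: on $[0, r_+]$, $f'(r) = -(1+a) - 2r < 0$; and since $f$ decreases from $0$ to $r_-$, provided the critical point $-(1+a)/2$ of $f$ does not lie inside $(r_-, 0)$, $f'$ retains a single sign on $[r_-, 0]$ and $g'(r) = (-)(-) > 0$. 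Monotonicity then gives $g([0, r_+]) \subseteq [0, r_+]$, so $s_n$ is increasing and bounded above by $r_+$; its limit is a fixed point of $g$ in $(0, r_+]$ and therefore equals $r_+$.

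The main obstacle is that the monotonicity step above goes through cleanly only when the critical point $-(1+a)/2$ lies outside $(r_-, 0)$, equivalently $\sqrt{a^2+4a} \leq 1$, i.e., $a \leq -2 + \sqrt{5}$. For the remaining range $a \in (-2+\sqrt{5},\,1)$, $g$ may temporarily overshoot $r_+$ and the naive monotone-convergence proof breaks down. In that regime I would instead appeal to local linearization at the 2-cycle, using $g'(r_\pm) = f'(r_+)\,f'(r_-) = 1 - (a^2 + 4a)$; its modulus is less than $1$ exactly when $a^2 + 4a < 2$, i.e., $a < -2 + \sqrt{6}$, so $r_\pm$ become hyperbolic attracting fixed points of $g$, and any $s_n$ entering a sufficiently small neighborhood of $r_+$ converges to it geometrically. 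The global expansion result would then be used to drive $|r_t|$ into such a neighborhood in finitely many steps; this is precisely where the implicit ``proper initialization'' caveat does most of the work, and would be made rigorous by an explicit quantification of the basin of attraction of the 2-cycle or by a case split on the range of $a$.
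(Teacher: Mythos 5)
Your treatment of the first two claims is correct and takes a genuinely different, cleaner route than the paper's. The paper expands $r_{t+2}$ in terms of $r_t,r_{t+1}$, reduces to an auxiliary polynomial $h(s)-s$, and locates the interval on which it has the right sign partly by appeal to symbolic computation; you instead identify all four roots $0$, $-(2+a)$, $r_+$, $r_-$ of the quartic $f\circ f(r)-r$ in advance, read off the exact factorization $-r(r-r_+)(r-r_-)(r+2+a)$ from the leading coefficient, and finish with a sign count. This is fully rigorous, avoids any black-box computation, and makes transparent that the expansion is strict only on $(r_-,r_+)\setminus\{r_-,0,r_+\}$ (at the three roots one gets equality, a caveat both you and the paper elide). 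Your verification of $r_->-(1+a)$ for the sign alternation coincides with the paper's.

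On the convergence of the two subsequences your argument is incomplete, as you acknowledge, but the obstruction you run into is largely in the statement itself rather than in your method. Your multiplier computation $g'(r_\pm)=f'(r_+)f'(r_-)=1-(a^2+4a)$ shows the $2$-cycle is attracting only for $a<\sqrt{6}-2\approx0.449$; for $a\in(\sqrt{6}-2,1)$ it is repelling and the orbit generically settles onto a $4$-cycle or worse. For instance at $a=1/2$ one has $r_+=0.5$ but the positive iterates converge to an oscillation between roughly $0.394$ and $0.563$, so the claimed limits do not exist for generic initializations. The paper's own proof does not close this gap either: it simply \emph{assumes} the positive and negative subsequences converge and then identifies the limits as the nontrivial solutions of the second-iterate fixed-point equation, which begs the question. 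So your monotone-invariance argument for $a\le\sqrt{5}-2$, supplemented by the local linearization and a basin-of-attraction argument up to $a<\sqrt{6}-2$, is essentially the most that can be salvaged; beyond $\sqrt{6}-2$ the lemma requires either a restriction on $a$ or a weaker conclusion, and no choice of "proper initialization" short of starting exactly on the cycle or one of its preimages rescues it.
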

\vspace{-0.3em}
\paragraph{Phase transition with varying input.} We have shown when $\alpha_t$ is a constant, it demonstrates different behavior when $\alpha$ admits different signs. Nevertheless,  we now consider the case where a varying $\alpha_t$ is allowed. Especially, we initialize $\alpha_t$ as negative in the beginning phases and gradually increase $\alpha_t$ to become positive. Intuitively, the iteration will display a \emph{phase transition} from the self-limiting regime to the \emph{damping oscillation} regime when $\alpha_t$ changes signs. This is confirmed by running examples under such inputs, as illustrated in Figure~\ref{fig: toy-model}. %For more experiments, please refer to Appendix~\ref{}.

\vspace{-0.4em}
\subsection{Proof Overview of Main Theorems}
\vspace{-0.3em}
We now use the idea from toy model analysis to illustrate our proving strategy for theorems in Section~\ref{sec: theoretical}. We first prove that GD iteration on our model is equivalent to an iteration of a quadruplet as in the next lemma. This allows us to tackle an equivalent and much simpler iteration by avoiding matrix-vector products.
\begin{lemma}[Informal] \label{lemma: informal-quadruplet}
    Running GD on the model in (\ref{eq: erm}) with \ref{eq: reparam} corresponds the following iteration of quadruplet $(a_t, a'_t, b_t, b'_t) \in \sR^4$:
    \begin{align*}
        a_{t+1} &= (1-\eta r_t)^2 \cdot a_t, \qquad b_{t+1} = (1-x\cdot \eta r_t)^2 \cdot b_t, \\
        a'_{t+1} &= (1+\eta r_t)^2 \cdot a'_t, \qquad b'_{t+1} = (1+x\cdot \eta r_t)^2 \cdot b'_t,
    \end{align*}
    where we have $r_t = (1+w^2)(a_t-a_t' + x\cdot( b_t-b'_t)) - \mu $.
\end{lemma}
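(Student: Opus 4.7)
The plan is to rewrite the vector GD recursion coordinate-by-coordinate and then square each scalar update. Under Assumption~\ref{asmp: one-sample} we have $d=2$ and $\vx=(1,x)$, so $\vw_{\pm,t}\in\sR^2$ splits into four scalars, and the natural quadruplet to introduce is $a_t:=w_{+,1,t}^2$, $a'_t:=w_{-,1,t}^2$, $b_t:=w_{+,2,t}^2$, $b'_t:=w_{-,2,t}^2$. With this choice $\evbeta_{\vw_t}=\vw_{+,t}^2-\vw_{-,t}^2=(a_t-a'_t,\,b_t-b'_t)$, so by the definition of the residual
$$
r_t=\langle\vx,\evbeta_{\vw_t}\rangle - y=(a_t-a'_t)+x(b_t-b'_t)-\mu,
$$
which matches the stated residual (I read the $(1+w^2)$ prefactor in the lemma as a typographical artifact whose correct value is simply $1$).

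Next I would unpack the GD step using the gradient formula given in Section~\ref{sec: prelim}. With $l(s)=s^2/4$ we have $2l'(r_t)=r_t$, so the update (\ref{eq: GD}) reduces to
$$
\vw_{+,t+1}=\vw_{+,t}-\eta r_t\,(\vx\odot\vw_{+,t}),\qquad \vw_{-,t+1}=\vw_{-,t}+\eta r_t\,(\vx\odot\vw_{-,t}).
$$
Substituting $\vx=(1,x)$ and reading off each of the four coordinates gives the scalar recursions $w_{+,1,t+1}=(1-\eta r_t)\,w_{+,1,t}$, $w_{+,2,t+1}=(1-x\eta r_t)\,w_{+,2,t}$, together with the sign-flipped analogues $w_{-,1,t+1}=(1+\eta r_t)\,w_{-,1,t}$ and $w_{-,2,t+1}=(1+x\eta r_t)\,w_{-,2,t}$. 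Squaring each of these four identities yields exactly
$$
a_{t+1}=(1-\eta r_t)^2\,a_t,\ \ b_{t+1}=(1-x\eta r_t)^2\,b_t,\ \ a'_{t+1}=(1+\eta r_t)^2\,a'_t,\ \ b'_{t+1}=(1+x\eta r_t)^2\,b'_t,
$$
which is the claimed iteration. Combined with the closed-form expression for $r_t$ in $(a_t,a'_t,b_t,b'_t)$ from the first step, the quadruplet recursion is self-contained.

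The proof is essentially a book-keeping identity and has no real obstacle; the only care needed is to track the opposite sign conventions between $\vw_+$ and $\vw_-$ in $\nabla_{\vw}\gL$, and to confirm that no information is lost when passing from the signed coordinates of $\vw_{\pm}$ to their squares. The latter is immaterial because every multiplicative factor in the update is a scalar, so the sign of each $w_{\pm,j,t}$ is determined by the product of past factors and never feeds back into $r_t$ except through its square. This sign-invariance is precisely the property that makes the quadruplet reformulation useful downstream: it recasts a two-dimensional vector GD recursion with oscillating coordinates as a four-dimensional scalar system with nonnegative state, which is the form required for the bifurcation-style analysis described in the proof overview.
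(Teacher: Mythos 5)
Your proof is correct, and it reaches the quadruplet by a genuinely more elementary route than the paper. The paper's formal version (Lemmas~\ref{lemma: pq-dynamics} and~\ref{lemma: main-dynamics}) first expands $\vw_{t,\pm}^2 \mp \tfrac12\evbeta^*$ in the orthogonal basis $\{(1,x),(x,-1)\}$, derives a matrix recursion for the coefficient vectors $(\vp_t,\vq_t)$ involving matrices $\mA,\mB$, and then diagonalizes $\mA,\mB$ (whose shared eigenvectors are again $(1,x)$ and $(x,-1)$, with eigenvalues $1,x$ and $1,x^2$) to extract the scalar quadruplet. You instead read off the four coordinate-wise scalar updates of $\vw_{\pm,t}$ and square them, which closes the system immediately because the residual depends on $\vw_{\pm,t}$ only through the squares. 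The two constructions differ by a constant positive rescaling: unwinding the paper's decomposition gives $a_t = w_{+,1,t}^2/(1+x^2)$, $b_t = w_{+,2,t}^2/(1+x^2)$, and likewise for the primed variables, so the multiplicative recursion is identical (it is homogeneous of degree one in the quadruplet) while the residual prefactor is $(1+x^2)$ in the paper's normalization and $1$ in yours. Your reading of the $(1+w^2)$ factor as a typo for $1$ is therefore slightly off: the symbol $w$ is a typo for $x$, but the factor $(1+x^2)$ is genuine under the paper's definition of the quadruplet and only disappears under your choice of normalization; since the lemma is informal and does not pin down the scale, both are faithful realizations. What the paper's heavier machinery buys is that the coefficients $p_t^i - q_t^i$ live directly in the null-space decomposition of $\langle\vx,\cdot\rangle = y$, which is reused downstream to identify $\evbeta_\infty$ and bound $\|\evbeta_\infty - \evbeta^*\|$; your derivation is shorter, avoids the matrix computation entirely, and makes the reason the system closes (the coordinate-wise diagonal structure of the update) transparent. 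Your closing remark that the sign of each coordinate never feeds back into the squared dynamics is exactly the right point to secure the reduction.
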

Still, dealing with four variables remains a challenging task. Aided by the empirical observation (see Figure~\ref{fig: quadruplet}) that $a'_t$ and $b'_t$ are almost unchanged from the initial position throughout the whole time range, we can further simply the dynamics by fixing the two variables. This gives an iteration with only $(a_t,b_t)$.

\begin{figure}[th]
    \centering   
    \vspace{-0.3em}
    \includegraphics[width=0.95\textwidth]{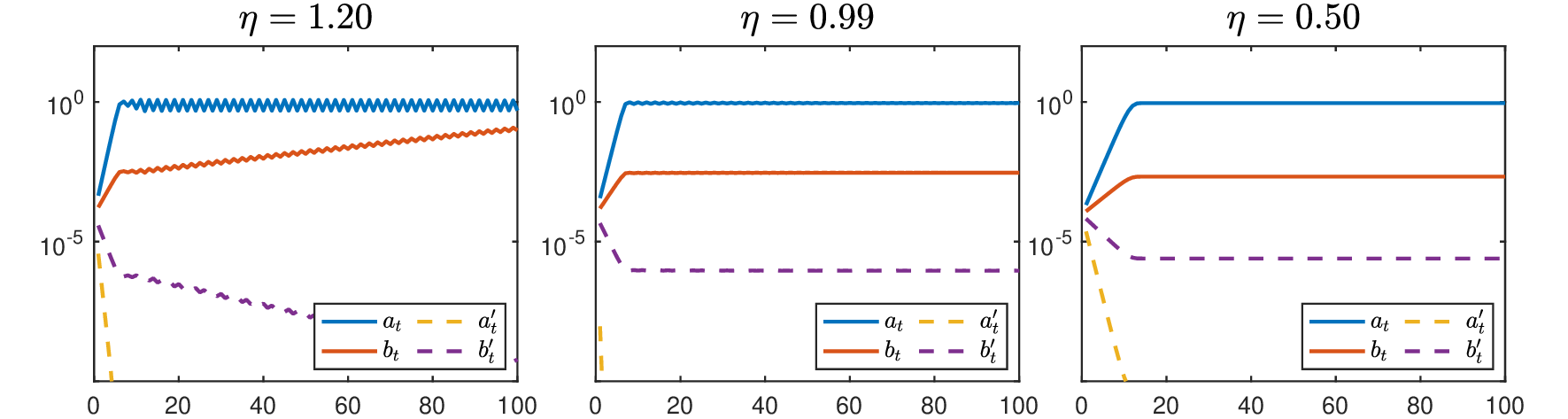}
    \caption{\small Equivalent dynamics of quadruplet $(a_t,a'_t,b_t,b'_t)$ in Lemma~\ref{lemma: informal-quadruplet}. We compare EoS regime ($\eta = 1.2$) and GF regime ($\eta = 0.5$) in terms of sharpness, trajectory and convergence of $|r_t|$, where $\Delta a_t = a_t - a_t'$ and $\Delta b_t = b_t - b_t'$. In the first plot, we only picture the trajectory under $\eta = 1.2$ to show that $a'_t,b'_t$ remain almost fixed throughout the whole period. }
    \label{fig: quadruplet}
    \vspace{-0.3em}
\end{figure}
It is not hard to observe that to establish convergence, it suffices to show that the residual term $r_t$ converges to zero. We observe that $r_t$ admits an update similar to the toy model iteration in (\ref{eq: toy-model}):
\begin{align*}
    r_{t+1} = - (1 - \alpha_t) \cdot r_t - \beta_t \cdot r_t^2,
\end{align*}
where $\alpha_t$ and $\beta_t$'s are time-dependent variables (definition see Appendix~\ref{apdx: transformation}). The case $\mu\eta < 1$ in Theorem~\ref{thm: small-ss} corresponds to $\alpha_t > 0$ throughout the time and the convergence can be proven using an argument similar to Lemma~\ref{lemma: toy-positive}. The more difficult one is the setting $\eta \mu > 1$ considered in Theorem~\ref{thm: large-ss}, which implies $\alpha_t < 0 $ and does not lead to convergence. Nevertheless, by a contradiction argument we show that if $|x| < \tfrac{1}{\mu\eta}$ is true, $\alpha_t$ will decrease in an oscillatory style until it becomes negative. The change of signs leads to a phase transition, allowing us to establish convergence. To the best of our knowledge, previous works including \cite{chen2023stability} majorly tackled the settings similar to $\mu\eta < 1$, and hence $\alpha_t > 0$ holds. We believe our result is innovative because we are the \emph{first} to show convergence for the case similar to $\mu\eta > 1$ that requires to show a phase transition does occur.

\vspace{-0.4em}
\section{Conclusion}
\vspace{-0.4em}
In this paper, we consider the task of finding interpolators for the linear regression with quadratic parameterization and study the convergence of constant step-size GD under the large step-size regime. In particular, we focus on the non-trivial question of whether a quadratic loss can trigger the Edge of Stability (EoS) phenomena or not, which seems unlikely from previous literature. Nevertheless, we show through both empirical and theoretical aspects that, when some certain condition is satisfied, EoS indeed occurs given quadratic loss. We hope this novel result takes a step further toward understanding the intriguing phenomena of unstable convergence.

\bibliography{iclr2025_conference}
\bibliographystyle{iclr2025_conference}

\newpage
\appendix

\begin{center}
	\textbf{\Large{Appendix: Proofs and Supplementary Materials}}
\end{center}

\section{Proof of the Main Theorems} \label{apdx: main}
In this section, we present the proof of our major result, i.e. Theorem~\ref{thm: small-ss}, Theorem~\ref{thm: large-ss}, and Proposition~\ref{prop: even-larger-ss}. The proof is very complicated and consists of several different parts. To improve the readability, we provide a sketch and an outline before proceeding to it.

\paragraph{The sketch and outline of the proof.} 
The major point of our theorems is to prove that $\evbeta_{\vw_t} = \vw^2_{t,+} - \vw^2_{t,-}$ converges in the EoS regime to a linear interpolator for the sample $(\vx, y)$, where $\vx = (1, x)$ and $y = \mu$. This is formally stated as
\begin{align*}
    \lim_{t\to\infty} \langle \evbeta_{\vw_t}, \vx \rangle = \mu.
\end{align*}
This is equivalent to saying that the residual function $r_t = \langle  \evbeta_{\vw_t}, \vx \rangle - \mu$ has a limit as $\lim_{t\to\infty} r_t = 0$. Nevertheless, this remains a challenging task even if it has $d = 2$. The update of GD over variables $\vw_{t,\pm}$ involves complicated matrix-vector computations and adds to the difficulties of analyzing the iteration of $r_t$ and other important quantities. Therefore, the first step is to find a simple-to-tackle iteration that is equivalent to and completely determines the GD dynamics over the original model. This is entailed in Appendix~\ref{apdx: transformation}, in which by Lemma~\ref{lemma: pq-dynamics} and Lemma~\ref{lemma: main-dynamics} we show that there exists an iteration of quadruplet $(a_t, a'_t, b'_t, b'_t)$ that completely determines the trajectory of GD, and $r_t$ can be expressed as a linear combination of the quadruplet.

With $r_t$ expressed via simpler variables and updates, we continue to show that, first, $r_t$ converges to zero, and also, each variable in quadruplet can be properly bounded. To make the picture clearer, we divide the entire trajectory of the quadruplet into two or three phases depending the the value of $\mu \eta$. This corresponds to what we have discussed in the empirical observations, Section~\ref{sec: empirical}. The first case $\eta\mu \in (0, 1)$ results in two phases while the second case has $\eta \mu > 1$ and three phases. %as it begins to converge after a short initial phase. The second case has $\eta \mu > 1$ and three phases. The end of the initial phase marks the beginning of an intermediate phase, and convergence will take place after the intermediate phase.

We notice that, in both cases, there exists an initial phase. We show that in this phase $a'_t$ and $b'_t$ will decrease to zero in fast speed. This allows us to analyze a simpler iteration by regarding $a'_t$ and $b'_t$ as constant. The analysis of the initial phase is presented in Appendix~\ref{apdx: initial}

The first case $\eta\mu \in (0,1)$ has only two phases and its second phase is featured by the fact that $|r_t|$ always strictly contracts, and convergence can be easily established in a straightforward way. Nevertheless, in the second phase of case $\eta\mu > 1$, the iteration $r_t$ does not necessarily contract. On the opposite, the envelope of $r_t$ might increase during its oscillation, until it saturates. However, if condition $|x| \in (0, \tfrac{1}{\eta\mu})$ is satisfied, it can be shown that a \emph{phase transition} always occurs such that $r_t$ begins to shrink. In this third phase, the convergence of $r_t$ can be proven using a similar technique for the case $\eta\mu \leq 1$. The proof of small step-size $\eta\mu \in (0, 1)$ is presented in Appendix~\ref{apdx: small-ss}, and proof of $\eta\mu \in (1, \tfrac{3\sqrt{2}-2}{2})$ is in Appendix~\ref{apdx: large-ss}. For even larger step-size and the proof of Proposition~\ref{prop: even-larger-ss}, please refer to Appendix~\ref{apdx: even-larger}.

%The results are formalized in the following Theorem~\ref{thm: small-ss} and Theorem~\ref{thm: large-ss}, which correspond to both cases. 

%\vspace{1em}
%\begin{theorem} \label{thm: small-ss}
%    Suppose Assumption~\ref{asmp: one-sample}. Under $\eta \mu \in (0, 1)$ and $\alpha^2 \ll $, GD converges to the limit $\lim_{t\to\infty} \evbeta_{\vw_t} = \evbeta_{\infty}$ which satisfies
%    \begin{align*}
%        \langle \vx, \evbeta_{\infty} \rangle = y, \qquad \text{and} \qquad \| \evbeta_{\infty} - \evbeta^* \| \leq O(1).
%    \end{align*}
%\end{theorem}

%\begin{theorem} \label{thm: large-ss}
%    Suppose Assumption~\ref{asmp: one-sample}. Under $\eta \mu \geq 1$ and $\alpha^2 \ll $, GD converges with a linear rate to the limit $\lim_{t\to\infty} \evbeta_{\vw_t} = \evbeta_{\infty}$ which satisfies
%    \begin{align*}
%        \langle \vx, \evbeta_{\infty} \rangle = y, \qquad \text{and} \qquad \| \evbeta_{\infty} - \evbeta^* \| \leq O(1).
%    \end{align*}
%    Moreover, suppose $r_t r_{t+1} < 0$ is true for any $t \geq t_0$, where $t_0$ is sufficiently large, then there exists some certain $\mathfrak{t}$ such that for any $t \geq \mathfrak{t}$, it holds that
%    \begin{align*}
%        |\langle\evbeta_{\vw_t}, \vx \rangle - y| \leq  e^{-\Theta(\mu\eta - 1) \cdot (t - \mathfrak{t})} \cdot   |\langle\evbeta_{\vw_\mathfrak{t}}, \vx \rangle - y|,
%    \end{align*}
%    and also $\| \evbeta_{\infty} - \evbeta^* \| \leq O\big(\tfrac{\mu\eta-1}{\eta}\big)$ . 
%\end{theorem}

\subsection{Linear Algebra and Simplification of GD Dynamics} \label{apdx: transformation}
In this subsection, we start to present the proof of the main theorem by finding a simple iteration that completely describes the behavior of GD. The major intermediate result of this subsection is summarized in Lemma~\ref{lemma: main-dynamics} and Lemma~\ref{lemma: r-update}. For readers who are not interested in linear algebra computation details, please skip the part and move directly to the aforementioned lemmas and the subsequent discussion. 

The first part of this section focuses on deriving the equivalent iteration of quadruplet $(a_t, a'_t, b_t, b'_t)$ by analyzing the solution space of linear system $\langle \vx, \evbeta \rangle = y$. Let us define the following vectors $\evbeta_0, \evbeta_1 \in \sR^2$:
\begin{equation}
    \evbeta_0 = (1, x), \qquad \text{and} \qquad \evbeta_1 = (x,-1).
\end{equation}
Notice that the collection $\{\evbeta: i=0,1\}$ constitutes a linear independent and orthogonal basis in $\sR^2$. Now consider the solution space of linear system $\langle \vx, \evbeta \rangle = y$, where $\vx := (1, x) \in \sR^{1\times 2}$, $y = \langle \vx, \evbeta^* \rangle = \mu \in \sR$ and $\evbeta \in \sR^2$. Then $\langle \vx, \evbeta \rangle = y$ is an underdetermined linear system with solution space as $\nullsp(\vx) + \evbeta^*$, where $\nullsp(\vx)$ is the null space of $\vx$. Moreover, let $\nullsp^{\perp}(\vx)$ denote the subspace orthogonal to $\nullsp(X)$. The dimensions of the null space and the complimentary space are, respectively, 
\begin{equation*}
    \dim\nullsp(\vx)  = 2-1 =1, \qquad \text{and} \qquad \dim\nullsp^{\perp}(\vx)  = 1.
\end{equation*} 
It is easy to conclude that $\{\evbeta_1\}$ and $\{\evbeta_0\}$ spans $\nullsp(\vx)$ and $\nullsp^{\perp}(\vx)$, respectively.

To show the convergence of GD iteration in (\ref{eq: GD}), it suffices to prove $\evbeta_{\vw_t} - \evbeta^* \in \nullsp(\vx)$ when $t$ goes to infinity. To this end, we write $\vw^2_{t,\pm} \mp \frac{1}{2}\evbeta^*$ as linear combination of $\{\evbeta_0,\evbeta_1\}$:
\begin{equation} \label{eq: dim-2-projection}
    \vw^2_{t,+} = \sum_{i=0}^1p_t^i\evbeta_i + \frac{1}{2}\evbeta^*, \qquad \vw^2_{t,-} = \sum_{i=0}^1q_t^i\evbeta_i - \frac{1}{2}\evbeta^*.
\end{equation}
This is equivalent to saying 
\begin{equation*}
    \evbeta_{\vw_t} = \sum_{i=0}^1 (p^i_t-q^i_t) \cdot \evbeta_i + \evbeta^*.
\end{equation*}
It is easy to derive that $r_t = (1+x^2) \cdot ( p^0_t - q^0_t)$. We use compact notations $\vp_t = (p_t^0, p_t^1)$ and $\vq_t = (q_t^0, q_t^1)$, which allows to write the iteration using matrix-vector multiplications. In this way, the following lemma characterizes the evolution of $(\vp_t,\vq_t)$.
\begin{lemma} \label{lemma: pq-dynamics}
    Consider the GD dynamics in (\ref{eq: GD}) and the decomposition in Eq.~(\ref{eq: dim-2-projection}). Then $(\vp_t,\vq_t)$ formalizes the following recurrences:
    \begin{equation} \label{eq: pq-dynamics}
        \begin{split}
             \vp_{t+1} = \big( \mI - 2\eta r_t \mA + \eta^2 r^2_t \mB\big) \cdot \vp_t - (2\eta r_t -\eta^2r_t^2) \cdot \frac{\mu\evbeta^*}{2(1+x^2)}, \\
            \vq_{t+1} = \big( \mI + 2\eta r_t \mA + \eta^2 r^2_t \mB\big) \cdot \vq_t - (2\eta r_t +\eta^2r_t^2) \cdot \frac{\mu\evbeta^*}{2(1+x^2)}
        \end{split}
    \end{equation}
    where $r_t = (1+x^2) \cdot ( p^0_t - q^0_t)$, $\mA,\mB \in \sR^{2\times2}$ are matrices defined as $\mA =\frac{1}{1+x^2}\begin{pmatrix}
        1+x^3 & x(1-x) \\ x(1-x) & x(1+x)
    \end{pmatrix}$ and $\mB =\frac{1}{1+x^2}\begin{pmatrix}
        1+x^4 & x(1-x^2) \\ x(1-x^2) & 2x^2
    \end{pmatrix}$. %Additionally, the initialization satisfies $\vp_0=, \vq_0 = $.
\end{lemma}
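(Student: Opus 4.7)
The derivation is a direct, if bookkeeping-heavy, linear-algebra computation that exploits two simplifications special to the one-sample quadratic-loss setting. First, since $l(s) = s^2/4$ we have $2l'(r_t) = r_t$, so the GD update in (\ref{eq: GD}) reduces to the multiplicative Hadamard form $\vw_{t+1,\pm} = (\vone \mp \eta r_t \vx) \odot \vw_{t,\pm}$, and squaring coordinate-wise gives
\begin{align*}
\vw_{t+1,\pm}^2 = (\vone \mp \eta r_t \vx)^2 \odot \vw_{t,\pm}^2 = \big(\vone \mp 2\eta r_t \vx + \eta^2 r_t^2 \vx^2\big) \odot \vw_{t,\pm}^2.
\end{align*}
Substituting the decomposition (\ref{eq: dim-2-projection}) for $\vw_{t,\pm}^2$ then reduces the problem to understanding how Hadamard multiplication by $\vx$ and by $\vx^2$ acts on each of the three vectors $\evbeta_0, \evbeta_1, \evbeta^*$, and to then re-expressing the results in the basis $\{\evbeta_0, \evbeta_1\}$.

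The key structural observation that keeps the forcing term clean is that $\evbeta^* = (\mu, 0)$ is a fixed point of both Hadamard operators: because the only nonzero coordinate of $\evbeta^*$ sits in the slot where $\vx$ has entry $1$, we have $\vx \odot \evbeta^* = \vx^2 \odot \evbeta^* = \evbeta^*$. Hence $(\vone \mp \eta r_t \vx)^2 \odot \tfrac{1}{2}\evbeta^* = \tfrac{1}{2}(1 \mp \eta r_t)^2 \evbeta^*$, and when we subtract the constant $\pm\tfrac{1}{2}\evbeta^*$ appearing on the left-hand side to isolate $\vp_{t+1}$ or $\vq_{t+1}$, we are left with a residual of the form $-\tfrac{1}{2}\big(2\eta r_t - \eta^2 r_t^2\big)\evbeta^*$ for the $\vp$ copy and $-\tfrac{1}{2}\big(2\eta r_t + \eta^2 r_t^2\big)\evbeta^*$ for the $\vq$ copy; the sign flip on the $\eta^2 r_t^2$ term is precisely what produces the difference between the two rows of Eq.~(\ref{eq: pq-dynamics}). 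Solving the $2\times 2$ change-of-basis system $(a,b) = c_0\evbeta_0 + c_1\evbeta_1$ yields the general formula $c_0 = (a+bx)/(1+x^2)$, $c_1 = (ax-b)/(1+x^2)$, and applying it to $\evbeta^* = (\mu,0)$ gives basis coordinates $\tfrac{\mu}{1+x^2}(1,x)^\top$, which is precisely the vector denoted (via slight abuse of notation) $\tfrac{\mu\evbeta^*}{2(1+x^2)}$ in the lemma once the factor of $1/2$ from the derivation above is folded in.

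The identification of $\mA$ and $\mB$ is then pure computation: I would evaluate $\vx \odot \evbeta_0, \vx \odot \evbeta_1, \vx^2 \odot \evbeta_0, \vx^2 \odot \evbeta_1$ from $\vx = (1,x)$, $\evbeta_0 = (1,x)$, $\evbeta_1 = (x,-1)$, and apply the same change-of-basis formula to each; collecting the coefficients as columns produces exactly the matrices stated in the lemma. Combining the three pieces (identity, linear, quadratic) gives the recurrences for $\vp_{t+1}$ and $\vq_{t+1}$. Finally, the identity $r_t = (1+x^2)(p_t^0 - q_t^0)$ is a one-line consequence of $r_t = \langle \vx, \evbeta_{\vw_t}\rangle - \mu = \langle \vx, \sum_i (p_t^i - q_t^i)\evbeta_i\rangle$ together with the orthogonality $\langle \vx, \evbeta_0\rangle = 1+x^2$ and $\langle \vx, \evbeta_1\rangle = 0$.

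There is no real conceptual obstacle here; the routine but error-prone step is tracking the $\mp$ signs in the two copies and carrying the inhomogeneous $\evbeta^*$ piece through the change of basis. The only slight subtlety worth flagging is the overloaded use of $\evbeta^*$ on the right-hand side of the lemma, which---unlike the $\vp_{t+1}, \vq_{t+1}$ on the left---has to be read as the coordinate representation of $\evbeta^*$ in the basis $\{\evbeta_0, \evbeta_1\}$ (i.e., the vector $(1,x)^\top$), rather than the ambient $(\mu, 0)$, in order for the factors of $\mu/(1+x^2)$ to line up correctly.
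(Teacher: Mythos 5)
Your proposal is correct and follows essentially the same route as the paper's proof: square the multiplicative update $\vw_{t+1,\pm}=(\vone\mp\eta r_t\vx)\odot\vw_{t,\pm}$, expand the Hadamard products of $\vx$ and $\vx^2$ with the basis vectors, and read off the projection coefficients $\langle\cdot,\evbeta_i\rangle/\|\evbeta_i\|^2$ to identify $\mA$ and $\mB$. Your shortcut of observing $\vx\odot\evbeta^*=\vx^2\odot\evbeta^*=\evbeta^*$ is a slightly cleaner way to obtain what the paper computes as $\bmu=\bnu=\mu\evbeta_0/(1+x^2)$, and you are right to flag that the $\evbeta^*$ in the lemma's forcing term must be read as the coordinate vector $\tfrac{\mu}{1+x^2}(1,x)^\top$ (i.e.\ as $\mu\evbeta_0/(1+x^2)$), which is exactly what the paper's own proof concludes.
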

\begin{proof}
    The GD iteration in (\ref{eq: GD}) indicates the following update of $\vw_{t,\pm}$'s:
    \begin{align*}
        \vw_{t+1,+} = \vw_{t,+} - \eta r_t \vx \odot \vw_{t,+}, \qquad \vw_{t+1,-} = \vw_{t,-} + \eta r_t \vx \odot \vw_{t,-}.
    \end{align*}
    We tackle $\vw_{t,+}$ first and write down the expansion of $\vw_{t,+}^2$ using elementwise products:
    \begin{align*}
        \vw^2_{t+1,+} & = \big(\vone - 2\eta r_t \vx + \eta^2 r_t^2 \vx^2 \big) \odot  \vw^2_{t,+} \\
        & = \vw^2_{t,+} - 2\eta r_t \big(\vx \odot \vw^2_{t,+}\big) + \eta^2 r^2_t \big(\vx^2 \odot \vw^2_{t,+}\big).
    \end{align*}
    Since the linear independent set $\{\evbeta_0,\evbeta_1\}$ spans $\sR^{2}$, we can represent $\vx \odot \vw^2_{t,+}$ as a linear combination of vectors from the basis:
    \begin{align*}
        \vx \odot \vw^2_{t,+} = & \sum_{i=0}^1 \frac{\langle \vx \odot \vw^2_{t,+}, \evbeta_i \rangle}{\|\evbeta_i\|^2}\cdot \evbeta_i \\
        \intertext{by plugging in Eq.(\ref{eq: dim-2-projection})}
        = & \sum_{i=0}^1\sum_{j=0}^1  \frac{p_t^j \langle \vx \odot \evbeta_j, \evbeta_i \rangle}{\|\evbeta_i\|^2} \cdot \evbeta_i + \frac{1}{2} \sum_{i=1}^1 \frac{ \langle \vx \odot \evbeta^*, \evbeta_i \rangle}{\|\evbeta_i\|^2} \cdot \evbeta_i.
    \end{align*}
    Similarly, we can expand $\vx^2 \odot \vw^2_{t,+}$ as a linear combination of $\evbeta_0, \evbeta_1$:
    \begin{align*}
        \vx^2 \odot \vw^2_{t,+} = & \sum_{i=0}^1 \frac{\langle \vx^2 \odot \vw^2_{t,+}, \evbeta_i \rangle}{\|\evbeta_i\|^2}\cdot \evbeta_i \\
        = & \sum_{i=0}^1\sum_{j=0}^1  \frac{p_t^j \langle \vx^2 \odot \evbeta_j, \evbeta_i \rangle}{\|\evbeta_i\|^2} \cdot \evbeta_i + \frac{1}{2} \sum_{i=1}^1 \frac{ \langle \vx^2 \odot \evbeta^*, \evbeta_i \rangle}{\|\evbeta_i\|^2} \cdot \evbeta_i.
    \end{align*}
    In the meanwhile, since $\vw^2_{t+1,+}$ admits the following decomposition with coefficients $\vp_{t+1}$:
    \begin{equation} \label{eq: dim-2-w-coefficients}
        \vw_{t+1,+}^2 = \sum_{i=0}^1p^i_{t+1}\evbeta_i + \frac{1}{2}\evbeta^*,
    \end{equation}
    we are able to compute by comparing the coefficients in Eq.(\ref{eq: dim-2-w-coefficients}) for $i = 0, 1$
    \begin{align*}
        p_{t+1}^i  = p_t^i & - 2\eta r_t \sum_{j=0}^d \frac{\langle \vx \odot\evbeta_j, \evbeta_i\rangle}{\|\evbeta_i\|^2} \cdot p_t^j + \eta^2 r^2_t \sum_{j=0}^d \frac{\langle \vx^2 \odot\evbeta_j, \evbeta_i\rangle}{\|\evbeta_i\|^2} \cdot p_t^j \\
        & - 2\eta r_t \frac{\langle \vx \odot\evbeta^*, \evbeta_i\rangle}{\|\evbeta_i\|^2} + \eta^2 r^2_t \frac{\langle \vx^2 \odot\evbeta^*, \evbeta_i\rangle}{\|\evbeta_i\|^2}. 
    \end{align*}
    This can be expressed with compact notation via matrix-vector multiplication as
    \begin{align*}
        \vp_{t+1} = \big( \mI - 2\eta r_t \mA + \eta^2 r^2_t \mB\big) \cdot \vp_t - 2\eta r_t \frac{\bmu}{2}+\eta^2r_t^2\frac{\bnu}{2},
    \end{align*}
    where $(\mA)_{ij} = A_{ij} = \frac{\langle \vx \odot\evbeta_j, \evbeta_i\rangle}{\|\evbeta_i\|^2}$, $(\mB)_{ij} = B_{ij} = \frac{\langle \vx^2 \odot\evbeta_j, \evbeta_i\rangle}{\|\evbeta_i\|^2}$, $(\bmu)_i = \mu_i = \frac{\langle \vx \odot \evbeta^*,\evbeta_i \rangle}{\|\evbeta_i\|^2}$, and $(\bnu)_i = \nu_i = \frac{\langle \vx^2 \odot \evbeta^*,\evbeta_i \rangle}{\|\evbeta_i\|^2}$ for $i,j\in[d]$. Noticing the symmetry between $\vp_t$ and $\vq_t$, we obtain the obtain the update of $\vq_t$:
        \begin{align*}
        \vq_{t+1} = \big( \mI + 2\eta r_t \mA + \eta^2 r^2_t \mB\big) \cdot \vq_t -2\eta r_t \frac{\bmu}{2}-\eta^2r_t^2\frac{\bnu}{2}.
    \end{align*}
    It still remains to determine the exact value of $A_{ij}$'s, $B_{ij}$'s and $\mu_i$'s. We calculate matrices $\mA$ and $\mB$ by discussing the following cases: %Since $\evbeta_0$, we discuss in 
    \begin{enumerate}[leftmargin=2em]
        \item {\bf Case $i=0,\ j=1$}: $A_{ij} = \frac{x(1-x)}{1+x^2}$, $B_{ij} = \frac{x(1-x^2)}{1+x^2}$;
        \item {\bf Case $i=0,\ j=0$}: $A_{ij} = \frac{1+x^3}{1+x^2}$,  $B_{ij} =\frac{1+x^3}{1+x^2}; $
        \item {\bf Case $i=1,\ j=1$}: $A_{ij} = \frac{x(x+1)}{1+x^2}$, $B_{ij} = \frac{2x^2}{1+x^2}$;
        \item {\bf Case $i=1,\ j=0$}: $A_{ij} = \frac{x(1-x)}{1+x^2}$, $B_{ij} = \frac{x(1-x^2)}{1+x^2}$.
    \end{enumerate}
    Similarly, $\evmu$ and $\evnu$ are computed as 
    \begin{enumerate}[leftmargin=2em]
        \item {\bf Case $i=0$}: $\bmu_i=\bnu_i = \frac{\mu}{1+x^2}$;
        \item {\bf Case $i=1$}: $\bmu_i=\bnu_i = \frac{\mu x}{1+x^2}$.     
    \end{enumerate}
    Noticing that $\bmu=\bnu=\mu\evbeta_0/(1+x^2)$, we finish the proof.
\end{proof}

We do not stop at the iterations of $(\vp_t, \vq_t)$ because it is still hard to analyze their updates via matrix-vector multiplication. It requires further simplification. Let us define vectors
\begin{align*}
    \vv_1 = (1, x), \qquad \vv_2 = (x, -1). 
\end{align*}
We write down the (non-standard) eigencomposition of matrices $\mA$ and $\mB$ define in Lemma~\ref{lemma: pq-dynamics}:
\begin{align*}
    \mA = & \lambda_1(\mA) \vv_1\vv_1^\top + \lambda_2(\mA) \vv_2\vv_2^\top  \\
    \mB = & \lambda_1(\mB) \vv_1\vv_1^\top + \lambda_2(\mB) \vv_2\vv_2^\top,
\end{align*}
where we have
\begin{align*}
    \lambda_1(\mA) = \lambda_1(\mB) = 1, \qquad \lambda_2(\mA) = x, \qquad \text{and} \qquad \lambda_2(\mB) = x^2.
\end{align*}
This suggests that $\mA$ and $\mB$ share the same eigenspace. Moreover, $\evbeta_0$ can be written as $\evbeta_0 = \vv_1$. As a result, we write $\vp_t,\vq_t \in \sR^{2}$ as the linear combination of $\vv_1,\vv_2$:
\begin{equation} \label{eq: dim-2-pq-decomposition}
    \vp_t = \left(a_t - \frac{\mu}{2(1+x^2)}\right) \vv_1 + b_t \vv_2, \qquad \vq_t =  \left(a'_t + \frac{\mu}{2(1+x^2)}\right) \vv_1 + b'_t \vv_2.
\end{equation}
The iteration of $(a_t,b_t,a'_t,b'_t)$ is characterized by the following lemma.
\begin{lemma} \label{lemma: main-dynamics}
    Consider the iteration of $(\vp_t,\vq_t)$ defined in (\ref{eq: pq-dynamics}) and the decomposition in Eq.~(\ref{eq: dim-2-pq-decomposition}). Then $(a_t, b_t, a'_t, b'_t)$ formalizes the following recurrences
    \begin{equation*} %\label{eq: main-dynamics}
        \begin{cases}
            a_{t+1}  = \big(1 - \eta r_t\big)^2 \cdot a_t  \\
            a'_{t+1} = \big(1 + \eta r_t\big)^2 \cdot a'_t  \\
            b_{t+1} = \big(1 - x \cdot \eta r_t\big)^2 \cdot b_t \\
            b'_{t+1} = \big(1 + x \cdot \eta r_t\big)^2 \cdot b'_t,
        \end{cases}
    \end{equation*}
    where we have 
    \begin{equation*} %\label{eq: r-t}
        r_t = (1+x^2) \cdot \big((a_t - a'_t) + x\cdot (b_t - b_t') - \frac{\mu}{1+x^2}\big).
    \end{equation*}
    Additionally, the initialization satisfies $a_0 = a'_0 = b_0 = b'_0 = \frac{\alpha}{1+x^2}.$
\end{lemma}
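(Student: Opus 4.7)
The plan is to exploit the observation that $\mA$ and $\mB$ are simultaneously diagonalizable in the orthogonal basis $\{\vv_1, \vv_2\}$. A short direct calculation using the explicit entries of $\mA$ and $\mB$ stated in Lemma~\ref{lemma: pq-dynamics} confirms $\mA \vv_1 = \vv_1$, $\mA \vv_2 = x \vv_2$, $\mB \vv_1 = \vv_1$, $\mB \vv_2 = x^2 \vv_2$, together with $\vv_1 \perp \vv_2$. Consequently the ``gain'' operator $\mI - 2\eta r_t \mA + \eta^2 r_t^2 \mB$ acts on $\vv_1$ by multiplication by $1 - 2\eta r_t + \eta^2 r_t^2 = (1-\eta r_t)^2$, and on $\vv_2$ by multiplication by $1 - 2x\eta r_t + x^2\eta^2 r_t^2 = (1 - x\eta r_t)^2$. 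The perfect-square factorization is exactly the algebraic structure that produces the form claimed in the lemma, and the analogous statement for the operator governing $\vq_t$ follows by flipping signs.

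Next I would inspect the inhomogeneous forcing. The computation of $\bmu$ and $\bnu$ inside the proof of Lemma~\ref{lemma: pq-dynamics} shows $\bmu = \bnu \propto \vv_1$, so the entire forcing lies in the $\vv_1$-direction and contributes nothing to the $\vv_2$-component. Substituting the ansatz $\vp_t = (a_t - \tfrac{\mu}{2(1+x^2)})\vv_1 + b_t \vv_2$ into (\ref{eq: pq-dynamics}) and matching the $\vv_2$-coefficient therefore gives immediately $b_{t+1} = (1-x\eta r_t)^2 b_t$; the same argument on $\vq_t$ produces $b'_{t+1} = (1+x\eta r_t)^2 b'_t$.

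For the $\vv_1$-coefficients, the desired recurrence $a_{t+1} = (1-\eta r_t)^2 a_t$ is equivalent, after subtracting the shift $\tfrac{\mu}{2(1+x^2)}$ from both sides, to the telescoping identity $(1-\eta r_t)^2 - 1 = -(2\eta r_t - \eta^2 r_t^2)$, which is trivially true. In other words, the constant shift in the ansatz is chosen precisely so that the inhomogeneous forcing cancels against the action of the transfer operator on that shift, leaving the clean autonomous recurrence. The same identity with the opposite sign handles $a'_t$. The closed-form expression for $r_t$ then follows by plugging the ansatz into the identity $r_t = (1+x^2)(p_t^0 - q_t^0)$ established in the proof of Lemma~\ref{lemma: pq-dynamics}, and the initial-condition claim is obtained by expanding $\vw_{0,\pm}^2 \mp \tfrac{1}{2}\evbeta^*$ in the $\{\vv_1, \vv_2\}$ basis with $\vw_{0,\pm}=\alpha\vone$. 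The whole proof is essentially a sequence of routine coefficient matches; the only step requiring any insight is spotting the simultaneous diagonalization of $\mA$ and $\mB$, after which everything collapses into the perfect squares that make the claimed recurrence so clean.
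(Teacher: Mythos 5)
Your proposal is correct and follows essentially the same route as the paper's proof: both rest on the simultaneous diagonalization $\mA\vv_1=\vv_1$, $\mA\vv_2=x\vv_2$, $\mB\vv_1=\vv_1$, $\mB\vv_2=x^2\vv_2$, the perfect-square factorizations $1-2\eta r_t+\eta^2r_t^2=(1-\eta r_t)^2$ and $1-2x\eta r_t+x^2\eta^2r_t^2=(1-x\eta r_t)^2$, the observation that the forcing $\bmu=\bnu$ lies along $\vv_1$, and coefficient matching under the shifted ansatz. Your explicit verification of the telescoping cancellation $(1-\eta r_t)^2-1=-(2\eta r_t-\eta^2 r_t^2)$ is exactly the mechanism the paper uses implicitly, so there is nothing further to add.
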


\begin{proof}
We consider the decomposition of $\vp_t$ as in Eq.~(\ref{eq: dim-2-pq-decomposition}) and calculate the expansion of $\vp_{t+1}$ using Lemma~\ref{lemma: pq-dynamics}:
\begin{align*}
    \vp_{t+1} & = \big( \mI - 2\eta r_t \mA + \eta^2 r_t^2 \mB \big) \cdot \big( a_t \vv_1 + b_t\vv_2 - \frac{\mu\vv_1}{2(1+x^2)}\big) - (2\eta r_t - \eta^2 r_t^2) \cdot \frac{\mu \vv_1}{2(1+x^2)}, \\
    \intertext{since $\lambda_i(\mA)$, $\lambda_i(\mB)$ are the corresponding (non-standard) eigenvalues of $\mA$ and $\mB$ for $i = \{1,2\}$,}
    & = \big(1-2\eta r_t \lambda_1(\mA) + \eta^2 r_t^2 \lambda_1(\mB) \big) \cdot a_t\vv_1 - (1 - 2\eta r_t + \eta^2 r_t^2) \cdot \frac{\mu \vv_1}{2(1+x^2)} \\
    & + \big(1-2\eta r_t \lambda_2(\mA) + \eta^2 r_t^2 \lambda_2(\mB) \big) \cdot b_t\vv_2 - (2\eta r_t - \eta^2 r_t^2) \cdot \frac{\mu \vv_1}{2(1+x^2)} \\
    & = (1-\eta r_t)^2 \cdot a_t + (1-x\cdot \eta r_t)^2 \cdot b_t - \frac{\mu\vv_1}{2(1+x^2)}. 
\end{align*}
By comparing the expansion with the following identity
\begin{align*}
    \vp_{t+1} = a_{t+1} \vv_1 + b_{t+1} \vv_2 - \frac{\mu\vv_1}{2(1+x^2)},
\end{align*}
we obtain the following updates of coefficients
\begin{align*}
    a_{t+1} & = \big( 1 - \eta r_t \big)^2 \cdot a_t, \qquad b_{t+1}  = \big( 1 - x\cdot \eta r_t \big)^2 \cdot b_t.
\end{align*}
Noticing the symmetry between $\vp_t$ and $\vq_t$ and their update, we also obtain the iteration of $a'_t$ and $b'_t$ as
\begin{align*}
    a'_{t+1} & = \big( 1 + \eta r_t \big)^2 \cdot a'_t, \qquad b'_{t+1}  = \big( 1 + x\cdot \eta r_t \big)^2 \cdot b'_t.
\end{align*}
Finally, simple calculation shows $r_t = (1+x^2) \cdot \big((a_t -a_t') + x\cdot (b_t-b'_t)\big) - \mu$.
\end{proof}
%\begin{align*}
%    \tilde{S}_t = 2\mu + 2(1+w^2) \cdot [a_t + w^2 b_t].
%\end{align*}
%\begin{align}
%    \begin{cases} a_{t+1} = (1-\eta r_t)^2 \cdot a_t, \\ b_{t+1} = (1- \eta x r_t)^2 \cdot b_t, \\
%    r_t =  (1+x^2)(a_t+x b_t - \frac{\mu}{1+x^2}).\end{cases}
%\end{align}
%It still remains to obtain the initial condition for the dynamics. In the original system, we have. This corresponds to 

Since our major goal is to prove the convergence of $r_t$, we want to express the update of $r_t$ directly. This is entailed in the next lemma.

\begin{lemma} \label{lemma: r-update}
    The update of $r_t$ follows the iteration below:
    \begin{equation*}
        \begin{split}
        r_{t+1} = \bigg( 1 - 2\eta \big(\mu &+ r_t - c_x \cdot (b_t - b'_t) \big) \bigg) \cdot r_t  + \eta^2 r_t^2 \cdot \bigg( \mu + r_t -  c_x' \cdot (b_t - b'_t) \bigg) \\
        & - (1+x^2) \cdot  4\eta r_t \cdot \big(a'_t + x^2 \cdot b'_t\big). 
        \end{split}  
    \end{equation*}
    where $c_x = x (1-x) (1+x^2)$ and $c_x' = x(1-x^2)(1+x^2)$.  
\end{lemma}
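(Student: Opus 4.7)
The plan is a direct algebraic verification starting from Lemma~\ref{lemma: main-dynamics}. First I would write
\begin{equation*}
r_{t+1} = (1+x^2)\bigl[(a_{t+1}-a'_{t+1}) + x(b_{t+1}-b'_{t+1})\bigr] - \mu,
\end{equation*}
then substitute the four squared-factor updates from Lemma~\ref{lemma: main-dynamics}. After expanding $(1\mp\eta r_t)^2$ and $(1\mp x\eta r_t)^2$, the terms organize themselves into three groups: the constant-in-$\eta$ part reproduces $(1+x^2)[(a_t-a'_t)+x(b_t-b'_t)] = r_t+\mu$, the linear-in-$\eta$ part pairs the two variables symmetrically as a \emph{sum} $(a_t+a'_t)$ and $(b_t+b'_t)$, and the $\eta^2$ part keeps the antisymmetric \emph{difference} $(a_t-a'_t)$ and $(b_t-b'_t)$ (with an extra $x^2$ factor from the $b$-branch).

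With that expansion in hand I would have
\begin{align*}
r_{t+1} = r_t &- 2\eta r_t(1+x^2)\bigl[(a_t+a'_t) + x^2(b_t+b'_t)\bigr] \\
&+ \eta^2 r_t^2(1+x^2)\bigl[(a_t-a'_t) + x^3(b_t-b'_t)\bigr],
\end{align*}
and the remaining step is to recast the $(a_t+a'_t)$ and $(a_t-a'_t)$ combinations using the identity $r_t+\mu = (1+x^2)[(a_t-a'_t)+x(b_t-b'_t)]$. Writing $a_t+a'_t = (a_t-a'_t) + 2a'_t$ in the linear-$\eta$ block isolates a $-4\eta r_t(1+x^2)(a'_t+x^2 b'_t)$ term, and the rest reassembles to $-2\eta r_t(\mu+r_t)$ plus a residual that is a pure multiple of $(b_t-b'_t)$.

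The main (and essentially only) obstacle is bookkeeping the $b_t-b'_t$ coefficients, since both the linear-in-$\eta$ and $\eta^2$ pieces leave leftover $x$-polynomials once the substitution for $\mu+r_t$ is carried out. Tracking these leftovers is where $c_x$ and $c_x'$ are forced on us: in the linear block one finds the residual coefficient $(x-x^2)(1+x^2) = x(1-x)(1+x^2) = c_x$, while in the quadratic block the identity $x - x(1-x^2) = x^3$ implies that the leftover $(b_t-b'_t)$ coefficient is $x(1-x^2)(1+x^2) = c_x'$. Verifying these two identities (and keeping the signs straight) completes the proof; no conceptual step beyond direct expansion is needed.
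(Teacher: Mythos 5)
Your proposal is correct and follows essentially the same route as the paper's proof: expand the quadruplet updates from Lemma~\ref{lemma: main-dynamics}, group by powers of $\eta r_t$ (obtaining the sum combination $(a_t+a'_t)+x^2(b_t+b'_t)$ in the linear block and the difference $(a_t-a'_t)+x^3(b_t-b'_t)$ in the quadratic block), and then reassemble via the identity $r_t+\mu=(1+x^2)[(a_t-a'_t)+x(b_t-b'_t)]$. Your intermediate display and the two coefficient identities forcing $c_x$ and $c_x'$ match the paper's computation exactly, so no further comment is needed.
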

\begin{proof}
We expand the update of $r_t$ as
\begin{align*}
    r_{t+1} & = (1+x^2) \cdot \bigg( (a_{t+1} - a'_{t+1}) + x \cdot (b_{t+1} - b'_{t+1}) - \frac{\mu}{1+x^2} \bigg)  \nonumber\\
    \intertext{by plugging the iteration in Lemma~\ref{lemma: main-dynamics},}
    & =  (1+x^2)  \cdot \bigg( (1 - \eta r_t)^2 \cdot a_{t+1} - (1 + \eta r_t)^2 \cdot a'_{t+1} + x \cdot (1 - x\eta r_t)^2 \cdot b_{t+1} \nonumber\\
    & \qquad\qquad\qquad\qquad\qquad\qquad\qquad- x \cdot (1 - x\eta r_t)^2 \cdot b'_{t+1} - \frac{\mu}{1+x^2} \bigg)  \nonumber\\
    \intertext{by merging terms in order of $r_t$'s,}
    & = (1+x^2)  \cdot \bigg( (a_t-a'_t) + x\cdot (b_t-b'_t) - \frac{\mu}{1+x^2} \bigg)  - 2\eta r_t  \cdot (1+x^2) \cdot \bigg( (a_t+a'_t) + x^2 \cdot (b_t+b'_t) \bigg)  \nonumber\\
    &  + \eta^2 r_t^2 \cdot (1+x^2) \cdot \bigg( (a_t-a'_t) + x^3 \cdot (b_t-b'_t) \bigg)  \nonumber\\
    & = (1+x^2)  \cdot \bigg( (a_t-a'_t) + x\cdot (b_t-b'_t) - \frac{\mu}{1+x^2} \bigg)  \nonumber\\
    & - 2\eta r_t  \cdot (1+x^2) \cdot \bigg( (a_t-a'_t) + x \cdot (b_t-b'_t) - \frac{\mu}{1+x^2} \bigg) \nonumber\\
    & + 2\eta r_t  \cdot (1+x^2) \cdot \bigg( -2a'_t - 2x^2 \cdot b'_t + x(1-x) \cdot (b_t - b'_t) - \frac{\mu}{1+x^2} \bigg) \nonumber\\
    & + \eta^2 r_t^2 \cdot (1+x^2) \cdot \bigg( (a_t-a'_t) + x \cdot (b_t-b'_t) - \frac{\mu}{1+x^2} \bigg)  \nonumber\\
    & - \eta^2 r_t^2 \cdot (1+x^2) \cdot \bigg(  x (1-x^2) \cdot (b_t-b'_t) - \frac{\mu}{1+x^2} \bigg).
\end{align*}
By noticing the equality of $r_t$ as in Lemma~\ref{lemma: main-dynamics}, we obtain
\begin{equation*}
    \begin{split}
        r_{t+1} = \bigg( 1 - 2\eta \big(\mu &+ r_t - c_x \cdot (b_t - b'_t) \big) \bigg) \cdot r_t  + \eta^2 r_t^2 \cdot \bigg( \mu + r_t -  c_x' \cdot (b_t - b'_t) \bigg) \\
        & - (1+x^2) \cdot  4\eta r_t \cdot \big(a'_t + x^2 \cdot b'_t\big) 
    \end{split}  
\end{equation*}
%\begin{equation} \label{eq: r-update2}
%    \begin{split}
%        r_{t+1} & = \bigg( 1 - 2\eta \big(\mu + r_t - c_x b_t  \big) \bigg) \cdot r_t  + \eta^2 r_t^2 \cdot \bigg( \mu + r_t -  c_x' b_t  \bigg) \\
%        & = \bigg( 1 - 2\eta \big(\mu - c_x b_t  \big) \bigg) \cdot r_t  -  r_t^2 \cdot \bigg( 2\eta - \eta^2\big( \mu + r_t -  c_x' b_t\big)  \bigg)
%    \end{split}  
%\end{equation}
where $c_x =x (1+x^2) (1-x) $ and $c_x' = x (1+x^2) (1-x^2)$.
\end{proof}

For future convenience, we introduce below several compact notations. First, denote $\Delta a_t = a_t - a_t'$ and $\Delta b_t = b_t - b_t'$. This allows us to define $\alpha_t$, $\beta_t$ and $\gamma_t$
%\begin{align*}
%    \alpha_t = 2 - 2\eta\big(\mu + r_t - c_x \Delta b_t\big), \quad \beta_t =  \eta^2\big(\mu+r_t - c'_x \Delta b_t\big), \quad \gamma_t = (1+x^2) \cdot 4\eta \big(a'_t + x^2 \cdot b_t'\big).
%\end{align*} 
\begin{align*}
    \alpha_t = 2 - 2\eta\big(\mu  - c_x \Delta b_t\big), \quad \beta_t =  2\eta - \eta^2\big(\mu+r_t - c'_x \Delta b_t\big), \quad \gamma_t = (1+x^2) \cdot 4\eta \big(a'_t + x^2 \cdot b_t'\big).
\end{align*} 
%In particular, the following identity holds
%\begin{align*}
%    \alpha_t & = 2 - 2\eta\big(\mu + r_t - c_x \Delta b_t\big) \\
%    & = 2 - 2\eta\left(\mu + (1+x^2) \cdot \left( \Delta a_t + x^2 \Delta b_t - \tfrac{\mu}{1+x^2}\right)  - c_x \Delta b_t\right) \\
%    & = 2 - 2\eta(1+x^2) \cdot \big( \Delta a_t + x^2 \cdot \Delta b_t\big)
%\end{align*}
%and also
%\begin{align*}
%    \beta_t = \eta^2 \big(\mu + r_t - c'_x \Delta b_t\big) &= \eta^2 \left(\mu + (1+x^2) \cdot \left(a_t + x^2 \Delta b_t - \tfrac{\mu}{1+x^2}\right)  - c'_x\Delta b_t\right) \\
%    &= (1+x^2)  \eta^2 \cdot \big(\Delta a_t + x^3 \cdot \Delta b_t\big) 
%\end{align*}
The above notations allow us to rewrite the update of $r_t$ in Lemma~\ref{lemma: r-update} in a more compact form:
%\begin{align} \label{eq: r-in-alpha-beta}
%    r_{t+1} = - (1 - \alpha_t + \gamma_t) \cdot r_t + \beta_t \cdot r_t^2.
%\end{align}
\begin{align} \label{eq: r-in-alpha-beta-gamma}
    r_{t+1} = - (1 - \alpha_t + \gamma_t) \cdot r_t - \beta_t \cdot r_t^2.
\end{align}

\subsection{Initial phase and Transition to Oscillation} \label{apdx: initial}
In Appendix~\ref{apdx: transformation}, we have shown that the GD iteration can be equivalently described by the iteration of quadruplet $(a_t, a'_t, b_t, b_t')$ in Lemma~\ref{lemma: main-dynamics}. Moreover, we express $r_{t+1}$ as a function of $r_t$ and the quadruplet in Lemma~\ref{lemma: r-update}. In this subsection, we will analyze the behavior of $(a_t, a'_t, b_t, b_t')$ and $r_t$ in the initial phases. It can be easily derived from that the initialization $\vw_{0,\pm} = \alpha\vone$ corresponds to the following initial value of the quadruplet
\begin{align*}
    a_0 = a'_0 = b_0 = b'_0 = \frac{\mu}{2(1+x^2)}.
\end{align*}
As a result, we have $r_0 = - \mu$.

From the iteration in Lemma~\ref{lemma: main-dynamics}, it is obvious that all of $a_t,a'_t,b_t$, and $b'_t$ are non-negative for any $t$. We will use this simple property without reference to it. When $\alpha$ is set to be sufficiently small, the value of each term in the quadruplet will be negligible compared with $|r_t|$ when $t$ is small. As a result, $r_t$ remains negative and stable as $r_t \approx -\Theta(\mu)$ holds in the initial phase, which leads to the fast increase of $a_t$, $b_t$ and also the fast decrease of $a'_t, b'_t$ to \textbf{zero} in a few iterations. This phenomenon is empirically illustrated in Figure~\ref{fig: stable}.
\begin{figure}[th]
    \centering \includegraphics[width=1.0\textwidth]{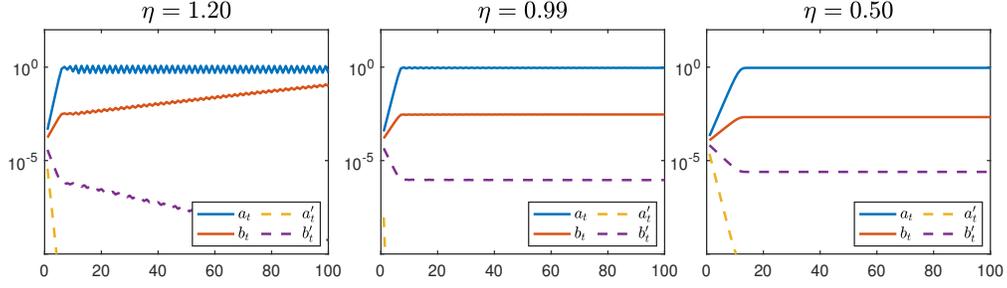}
    \caption{\small Stable behavior of $a'_t$ and $b'_t$ iterations in Lemma~\ref{lemma: main-dynamics}. Under initialization $a_0 = a_0' = b_0 = b_0' = \frac{\mu}{2(1+x^2}$, both $a'_t$ and $b'_t$ will decrease to zero in initial several iterations. We use different step-sizes in each plot, which corresponds to EoS with $\eta\mu> 1$, with $\eta\mu \leq 1$ and GF regime, respectively. We fix other parameters as $w = 0.3$, $\mu = 1$ and $\alpha = 0.01$. }
    \label{fig: stable}
    %\vspace{-0.5cm}
\end{figure}
The observation leads to two important results. First, the fast decrease of $a'_t$ and $b'_t$ allows us to further simplify the dynamics in Lemma~\ref{lemma: main-dynamics} by regarding $a'_t \equiv b'_t \equiv 0$ for any $t$ larger than some constant $t_0$, where $t_0$ marks the end of the initial phase. In the meanwhile, since $r_t < 0$ holds in the initial phase, we have
\begin{align*}
    \frac{a_{t+1}}{b_{t+1}} = \frac{(1-\eta r_t)^2}{(1-x\cdot\eta r_t)^2} \cdot \frac{a_t}{b_t} \approx e^{-\Theta(\eta r_t)} \cdot \frac{a_t}{b_t} \approx e^{\Theta(\eta \mu)} \cdot \frac{a_t}{b_t},
\end{align*}
which suggests the increase of $a_t$'s outrun $b_t$'s. As a result, the update of $r_t$ is approximate as
\begin{align*}
    r_{t+1} \approx (1+x^2) \cdot a_{t+1} - \mu & \approx   (1-\eta r_t)^2 \left(r_t + \mu \right) - \mu \\
    & = -(2\mu\eta - 1) r_t - (2\eta -\eta^2\mu) r_t^2 + \eta^2 r_t^3,
\end{align*}
which generates an increasing sequence (therefore decreasing in absolute value) when $r_t < -\Theta(\mu)$.% and a phase transition of $r_t$ from increase to oscillation will happen once $r_t$ hits $\mu \eta > \tfrac{1}{2}$; whereas under alternative condition $\mu \eta \in (0, \tfrac{1}{2})$, although the sequence is still increasing, \textbf{no} phase transition will happen. 

It can be possible that the value of $r_t$ decreases monotonically for $t$ after $t_0$ and hence the trajectory falls into the GF regime. On the contrary, in the EoS regime, while $r_t$ is decreased enough, $a_t$ can increase to reach the level and surpass $\Theta(|r_t|)$, which causes an oscillation to start and hence ends the initial phase. The qualitative analysis is more formally analyzed in the following lemma.

\begin{lemma} \label{lemma: initial-phase}
    Suppose that $\mu\eta \in (0, 2)$ and $\alpha \ll O(1)$. There exist some $\tau = \Omega_{\eta,\mu}\left(\log\left(\tfrac{1}{\alpha^2}\right)\right)$ such that for any $t < \tau$, $r_t < -\frac{\mu}{2}$ is true. As a result, the following facts are true:
    \begin{align*}
        b_{\tau} \leq \mu^{-\Theta(1)} \cdot \alpha^{2-C} , \qquad 
        a'_{\tau} \leq \Theta(\tfrac{\alpha^4}{\mu}), \qquad b'_{\tau} \leq \mu^{\Theta(1)} \cdot \alpha^{2+C}.
    \end{align*}
    where $C$ is some constant between $(0,2)$.
\end{lemma}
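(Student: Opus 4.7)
The plan is to establish the claim by a bootstrapping induction on $t$. First I would define $\tau$ as the first iteration at which $r_t \geq -\mu/2$ (setting $\tau = \infty$ if this never occurs) and work under the inductive hypothesis that $r_s \in [-\mu, -\mu/2]$ for every $s < t$. Under this hypothesis, each recursion in Lemma~\ref{lemma: main-dynamics} admits clean constant-order multiplicative bounds, for instance
$$(1+\mu\eta/2)^2 \leq (1-\eta r_s)^2 \leq (1+\mu\eta)^2, \qquad (1-\mu\eta)^2 \leq (1+\eta r_s)^2 \leq (1-\mu\eta/2)^2,$$
together with analogous bounds for $b_s$ and $b'_s$ obtained by inserting the factor $x$. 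Because $\mu\eta \in (0,2)$, the multipliers governing $a'_s$ both lie strictly below $1$, so $a'_s$ decays geometrically from its initial value $\alpha^2/(1+x^2)$; the same holds for $b'_s$ under the standing non-degeneracy assumption on $x$.

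Second, iterating these per-step bounds yields schematic closed-form envelopes of the form
$$a_t \approx \alpha^2 (1+\mu\eta)^{\Theta(t)}, \quad a'_t \approx \alpha^2 (1-\mu\eta)^{\Theta(t)}, \quad b_t \approx \alpha^2 (1+x\mu\eta)^{\Theta(t)}, \quad b'_t \approx \alpha^2 (1-x\mu\eta)^{\Theta(t)}.$$
Combined with the identity $r_t = (1+x^2)(\Delta a_t + x\Delta b_t) - \mu$ from Lemma~\ref{lemma: main-dynamics}, the residual stays in $[-\mu,-\mu/2]$ precisely as long as the dominant term among $a_t, b_t$ remains $O(\mu)$, closing the induction. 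Solving $a_\tau \approx \mu/(1+x^2)$ for $\tau$ then gives $\tau = \Theta\!\bigl(\log(\mu/\alpha^2)/\log(1+\mu\eta)\bigr) = \Omega_{\mu,\eta}(\log(1/\alpha^2))$, and substituting this $\tau$ back into the envelopes for the other three entries produces the three claimed inequalities, with $C = 2\log(1+|x|\mu\eta)/\log(1+\mu\eta) \in (0,2)$ arising as the ratio of the $b_t$ and $a_t$ growth exponents.

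The main obstacle is closing the inductive step cleanly, namely controlling the correction terms in the $r_t$-update of Lemma~\ref{lemma: r-update}: the quadratic contribution $\eta^2 r_t^2 (\mu + r_t - c'_x \Delta b_t)$ and the drift $(1+x^2)\cdot 4\eta r_t (a'_t + x^2 b'_t)$ must be shown to be negligible compared to the leading $-\mu$ throughout the initial phase. This works precisely because $a'_t, b'_t$ decay geometrically and $\Delta b_t$ stays small while the induction is running, but one must quantify this carefully enough to prevent $r_t$ from either dipping below $-\mu$ or overshooting $-\mu/2$ before the target time $\tau$. A secondary subtlety is the boundary subregimes $\mu\eta$ near $1$ (where $(1-\mu\eta)^2 \approx 0$ makes $a'_t$ decay so fast that the $\Theta(\alpha^4/\mu)$ bound is essentially trivial) versus $\mu\eta$ near $2$ (where the decay of $a'_t$ is slow and the bound requires the constants hidden in $\tau$ to be chosen with care), but these are routine once the linearized framework above is in place.
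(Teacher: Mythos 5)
Your skeleton for locating $\tau$ is essentially the paper's: define $\tau$ as the first time $r_t \geq -\mu/2$, use $r_t\in[-\mu,-\mu/2)$ to sandwich the multiplicative growth of $a_t$ between $(1+\mu\eta/2)^{2t}\alpha^2$ and $e^{2\mu\eta t}\alpha^2$, and read off $\tau = \Theta_{\eta,\mu}(\log(\mu/\alpha^2))$ from the identity $r_t = (1+x^2)(\Delta a_t + x\Delta b_t)-\mu$ together with $a_\tau = \Theta(\mu)$. (The paper phrases existence as a contradiction rather than an induction, and it never needs the $r_t$-update of Lemma~\ref{lemma: r-update} in this phase, so your worry about the correction terms there is about a harder road than the one actually taken.) The $b_\tau$ bound as a ratio of growth exponents, giving $C\in(0,2)$, also matches the paper. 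One small error in your per-step bounds: for $\mu\eta>4/3$ the correct upper bound on $(1+\eta r_s)^2$ over $r_s\in[-\mu,-\mu/2]$ is $(\mu\eta-1)^2$, not $(1-\mu\eta/2)^2$; the multiplier is still below $1$, but it approaches $1$ as $\mu\eta\to 2$.

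The genuine gap is in the $a'_\tau$ and $b'_\tau$ bounds. Your plan derives them from geometric decay alone, i.e.\ $a'_\tau \lesssim \alpha^2\rho^{2\tau}$ with $\rho=\max\{|1-\mu\eta|,\,1-\mu\eta/2\}$. Converting the decay over $\tau=\Theta(\log(\mu/\alpha^2)/\log(1+\mu\eta/2))$ steps into a power of $\alpha$ gives an exponent $2+2\log(1/\rho)/\log(1+\mu\eta/2)$, which is \emph{not} $\geq 4$ in general: for $\mu\eta$ near $2$ the decay rate $\rho^2=(\mu\eta-1)^2$ is near $1$ while $a_t$ grows like $9^t$, so the extra exponent collapses to nearly $0$ and you only get $a'_\tau\lesssim \alpha^{2+o(1)}$, far from the claimed $\Theta(\alpha^4/\mu)$. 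The paper instead exploits a conservation law: since
\begin{align*}
a_{t+1}a'_{t+1} = (1-\eta r_t)^2(1+\eta r_t)^2\,a_ta'_t = (1-\eta^2 r_t^2)^2\,a_ta'_t \leq a_ta'_t,
\end{align*}
the product $a_ta'_t$ never exceeds $a_0a'_0=\Theta(\alpha^4)$, and dividing by the lower bound $a_\tau\geq \mu/8$ yields $a'_\tau\leq\Theta(\alpha^4/\mu)$ with no dependence on the decay rate of $a'_t$ itself. The same invariant for $b_tb'_t$ is what forces the exponents of $b_\tau$ and $b'_\tau$ to sit symmetrically at $2-C$ and $2+C$; your independent decay estimate for $b'_t$ will not reproduce that matching exponent. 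Without this product-invariant argument the two primed bounds in the lemma do not follow.
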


\begin{proof}
    Let $\tau$ be the first $t$ such that $r_t < -\frac{\mu}{2}$ does not hold. Suppose such a $\tau$ does not exist and $r_t < - \frac{\mu}{2}$ holds for any integer $t$. We observe the following inequality is true
    \begin{align*}
        a'_{t+1} = (1 + \eta r_{t})^2 \cdot a'_{t} < a'_{t}.
    \end{align*}
    Repeating the steps results in $a'_t < a'_0$. Due to the same argument, it holds that $b'_t < b'_0 = b_0 < b_t$. We consider the lower bound of $a_t$: since $r_t < -\tfrac{\mu}{2}$ is true for any $t$ by assumption,
    \begin{align*}
        a_{t} \geq \left(1 + \frac{\eta\mu}{2}\right)^2 \cdot a_{t-1} \geq \cdots \geq \left(1 + \frac{\eta\mu}{2}\right)^{2t} \cdot \alpha_0 = \left(1 + \frac{\eta\mu}{2}\right)^{2t}  \cdot \frac{\alpha^2}{2(1+x^2)}.
    \end{align*}
    We continue and use in the identity of $r_t$ in Lemma~\ref{lemma: main-dynamics}:
    \begin{align*}
         - \frac{\mu}{2} > r_{t} & = (1 + x^2) \Big( (a_{t} - a'_{t}) + x\cdot (b_{t} - b'_{t})\Big) - \mu \\
         & \geq \left(1 + \frac{\eta\mu}{2}\right)^{2t} \cdot \frac{\alpha^2}{2} - \mu - \frac{\alpha^2}{2} \\
         & \geq \frac{\alpha^2}{4} \cdot \left(1 + \frac{\eta\mu}{2}\right)^{2t} - \mu
        \end{align*}
    Rearranging yields $\tfrac{2\mu}{\alpha^2} \geq \left(1 + \tfrac{\eta\mu}{2}\right)^{2t}$, which should holds for $t$ by assumption. This is impossible because $\alpha$, $\eta$ and $\mu$ are constants, and we conclude by contradiction that $\tau$ exists and $\tau = O\left(\tfrac{1}{\mu\eta}\log\left(\tfrac{\mu}{\alpha^2}\right)\right)$. 
    
    We proceed to the lower bound of $\tau$. We first notice the following is true for any $t < \tau$:
    \begin{align*}
        r_{t+1} = (1 + x^2) \cdot \Big( (a_{t+1} - a'_{t+1}) + x\cdot (b_{t+1} - b'_{t+1})\Big) - \mu \geq - \mu.
    \end{align*}
    This is because $r_t < 0$ holds for any $t < \tau$ and therefore the above conclusion $a'_{t+1} \leq a_{t+1}$, $b'_{t+1} \leq b_{t+1}$ is still true. This allows to lower bound $a_{\tau}$ as
    \begin{align*}
        a_{\tau} \leq e^{-2\eta r_{\tau-1}} \cdot a_{\tau-1} \leq e^{2\mu\eta \tau} \cdot a_0.
    \end{align*}
    In the meanwhile, since $\tau$ is the first $t$ such that $r_t > -\tfrac{\mu}{2}$ no longer holds, we have the following inequality
    \begin{align*}
        a_{\tau}  =  \frac{r_{\tau}+\mu}{1+x^2} - x \cdot (b_{\tau} - b'_{\tau}) - a'_{\tau}  \geq \frac{-\mu/2+\mu}{1+x^2} - x\cdot a_{\tau} - O(\alpha^2),
    \end{align*}
    by rearranging the equality of $r_t$. We notice $x^2 < 1$ and therefore $a_{\tau} \geq \tfrac{\mu}{4(1+x^2)} \geq \mu/8$. Combining the above result, we obtain $a_0 \cdot e^{2\mu\eta \tau} \geq \mu/8$, which implies $\tau \geq \Omega\left(\tfrac{1}{\mu\eta}\log\big(\tfrac{\mu}{\alpha^2}\big)\right)$. It remains to bound for $b_t$, $a'_t$ and $b'_t$. For $a'_t$, it holds that
    \begin{align*}
        a'_{\tau} a_{\tau} \leq (1 - \eta r_{\tau-1})^2  \cdot (1 - \eta r_{\tau-1})^2 \cdot a'_{\tau} a_{\tau} < a'_{\tau-1} a_{\tau-1} < a'_0a_0 = \Theta(\alpha^4),
    \end{align*}
    which suggests $a'_{\tau} \leq \Theta(\mu/\alpha^4)$. For $b_t$, it can be computed as
    \begin{align*}
        b_{\tau} \leq e^{-2x\eta r_{\tau-1}} \cdot b_{\tau-1}  \leq e^{2x\eta \tau} \cdot b_0 \leq \Theta(\alpha^{2}) \cdot e^{\Theta(x) \cdot \log(\mu/\alpha^2)} \leq  \mu^{-\Theta(1)} \cdot \alpha^{2 - C}
    \end{align*}
    where $C$ is some constant between $(0, 2)$. Combining with
    \begin{align*}
        b'_{\tau} b_{\tau} \leq b'_{0} b_{0} \leq \Theta(\alpha^4),
    \end{align*}
    it holds that $b'_{\tau} \leq \mu^{\Theta(1)} \cdot \alpha^{2 + C} \ll b{\tau}$.
\end{proof}
%This is verified in the following theoretical analysis.
%\begin{restatable}{lem}{stable} \label{lemma: stable}
%    Consider the iteration of quadruplet $(a_t, a'_t, b_t, b_t')$ with initialization in Lemma~\ref{lemma: main-dynamics}. Then for any $t$, it holds that  $a_t a'_t \leq \frac{\alpha^4}{(1+x^2)^2}$, $b_t b'_t \leq \frac{\alpha^4}{(1+x^2)^2}$.
%\end{restatable}
%\begin{proof}
%    We observe that the following holds, which is immediately from the update of quadruplet:
%    \begin{align*}
%        a_{t+1} a'_{t+1} = (1 - \eta r_t)^2 \cdot (1 + \eta r_t)^2 \cdot a_t a'_{t} \leq a_t a'_t.
%    \end{align*}
%    Repeating the steps, we attain $a_{t} a'_{t} \leq a_0 a'_0 = \frac{\alpha^4}{(1+x^2)^2}$ for any $t$. With the same argument, we attain $b_{t} b'_{t} \leq  \frac{\alpha^4}{(1+x^2)^2}$.
%\end{proof}

In the statement of Theorem~\ref{thm: small-ss} and Theorem~\ref{thm: large-ss}, we assume that change of sign starts after the initial phase as $r_t r_{t+1}$ holds for any $t \geq t_0$. Lemma~\ref{lemma: initial-phase} suggests a lower bound of $t_0 \geq \tau = \Omega_{\eta,\mu}(\log(1/\alpha^2))$ as oscillation does not start when $r_t< -\mu/2$ still holds. Moreover, when $\alpha$ is sufficiently small, Lemma~\ref{lemma: initial-phase} indicates that $a'_t$ and $b'_t$ will decrease very quickly to zero before the change of sign, and hence allows us to simplify the iteration in Lemma~\ref{lemma: main-dynamics} by regarding
\begin{align*}
    a'_t \equiv b'_t \equiv 0, \qquad \text{for} \qquad \forall t \geq t_0.
\end{align*}
We use this approximation for any $t \geq t_0$. Now, we can write the iteration of $r_t$ as
\begin{align} \label{eq: r-in-alpha-beta}
    r_{t+1} = -(1 - \alpha_t + \beta_t r_t) \cdot r_t
\end{align}
where $\alpha_t = 2 - 2\eta\big(\mu - c_x b_t\big)$ and $\beta_t = 2 - 2\eta\big(\mu + r_t - c'_x b_t\big)$. Under this setting, we establish the convergence of $r_t$ for both setting $\eta\mu \leq 1$ and $\eta\mu > 1$, as in the next two theorems, respectively.

\subsection{Proof of Theorem~\ref{thm: small-ss}: $\eta\mu < 1$} \label{apdx: small-ss}
This subsection contains the convergence proof under the EoS regime with $\mu\eta < 1$. We define some notations before proceeding to its proof. Since our major focus is the convergence of $|r_t|$, 
it is more convenient to render the update of $(a_t, b_t)$ in \ref{eq: r-in-alpha-beta} as an equivalent update of bivariate $(r_t, s_t)$ defined as following. We first express $a_t$ as a linear combination of $r_t$ and $b_t$
\begin{align*}
    r_t = (1+x^2) \cdot \left(a_t + x \cdot b_t\right) - \mu \qquad \Longrightarrow \qquad a_t = \frac{r_t + \mu}{1+x^2} - x \cdot b_t.
\end{align*}
Moreover, for notational convenience, we replace $b_t$ with $s_t$ where the latter is rescaled version of $b_t$ by a constant factor, i.e. $s_t = \eta c_x \cdot b_t$. To obtain the update of the new sequence $(r_t, s_t)$,  we define the following polynomials of pair $(r, s)$ with fixed constants $\eta, \mu$ and $x$:
\begin{align*}
    g_{\eta, \mu, x}(r, s) & = - (2\mu\eta - 1) r - (2\eta - \mu\eta^2) r^2 + \eta^2 r^3 + 2 rs - (1+x) \eta r^2s, \\
    h_{\eta, \mu, x}(r, s) & =  s - 2x\eta rs + x^2\eta^2 r^2 s.
\end{align*}
The subscript is omitted, i.e. $g(r, s) = g_{\eta,\mu, x}(r,s)$ when it causes no confusion. Comparing $g, h$ and the iteration of $(a_t,b_t)$ in Lemma~\ref{lemma: main-dynamics} we obtain the following identities
\begin{align} \label{eq: r-s-system}
    r_{t+1} = g(r_t, s_t), \qquad s_{t+1} = h(r_t, s_t).
\end{align}
Therefore, we could employ the toolbox of dynamical systems to characterize the behavior of $(r_t,s_t)$.  Besides, it is worth noticing the following identity holds:
\begin{align*}
    \alpha_t &= 2 - 2\eta\mu + 2\eta c_x b_t = 2 - 2\eta\mu + 2 s_t, \\
    \beta_t &= 2\eta - \eta(\mu\eta + r_t\eta + (1+x) \cdot s_t)
\end{align*}
We will frequently use these equalities in the latter part. 

\paragraph{Proof outline.}
By the end of the initial phase, the absolute value of negative $r_t$ has decreased sufficiently, and the oscillation $r_t r_{t+1} < 0$ starts. From our empirical observation in Section~\ref{sec: empirical}, the major characteristic of EoS regime with $\eta\mu < 1$ is that the envelope of $r_t$ enters the convergence phase immediately as the initial phase ends. This is because during the convergence phase, for any $r_t < 0$, it can be shown that
\begin{align*}
    0 > r_{t+2} > (1 -\alpha_t) ( 1- \alpha_{t+1}) \cdot r_t.
\end{align*}
When the step-size is set to be smaller than $1/\mu$,  $\alpha_t \geq 2 - 2\eta\mu > 0$ will hold for any $t$, which further implies the shrinkage of $|r_t|$'s.

We start the proof by noticing the simple results for $a_t$ and $b_t$'s.
\begin{lemma} \label{lemma: a-b-bound}
    For any $t$, if $r_t \in (-\mu, 0]$ is true, then the following inequalities hold: 
    $$a_{t+1} > a_t, \quad b_{t+1} > b_t.$$%%, \quad \text{and} \quad a_{t+2} > a_t, \quad b_{t+2} > b_t.$$
\end{lemma}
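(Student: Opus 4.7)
The plan is to read off the result directly from the simplified iteration developed just before the lemma. By the discussion following Lemma~\ref{lemma: initial-phase}, once $t \geq t_0$ we may take $a'_t \equiv b'_t \equiv 0$, so Lemma~\ref{lemma: main-dynamics} reduces to the multiplicative updates
\[
a_{t+1} = (1 - \eta r_t)^2 \, a_t, \qquad b_{t+1} = (1 - x\eta r_t)^2 \, b_t.
\]
I would first establish that $a_t, b_t > 0$ for every $t$ under consideration. This is a quick induction: the initialization gives $a_0 = b_0 = \alpha/(1+x^2) > 0$, and each step multiplies by a non-negative square, with the square being strictly positive unless $r_t = 1/\eta$, which is excluded by the hypothesis $r_t \in (-\mu, 0]$ together with $\eta\mu < 1$.

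Next I would invoke the hypothesis $r_t \in (-\mu, 0)$ (treating the endpoint $r_t = 0$, where both inequalities collapse to equalities, as a harmless boundary case in the statement). Strict negativity of $r_t$ gives $-\eta r_t > 0$, so $1 - \eta r_t > 1$, and therefore $(1 - \eta r_t)^2 > 1$; combined with $a_t > 0$ this yields $a_{t+1} > a_t$. The same argument applies to $b_{t+1}$ via the factor $(1 - x\eta r_t)^2$: in the relevant sign regime for $x$ (i.e.\ when $x\eta r_t < 0$) this factor also exceeds $1$, producing $b_{t+1} > b_t$.

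I do not expect any real obstacle here; the entire lemma is a one-line consequence of the explicit update formulas after the simplification $a'_t \equiv b'_t \equiv 0$. The only mild point to watch is whether the factor $(1 - x\eta r_t)^2$ could dip below $1$ for the wrong sign combination of $x$ and $r_t$; under the constraints $|x| < 1$ and $|r_t| < \mu$ with $\eta\mu < 1$ used in Theorem~\ref{thm: small-ss}, this does not occur, and the bound is clean.
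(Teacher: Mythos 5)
Your proof is correct and takes essentially the same route as the paper's, which likewise reads the monotonicity directly off the multiplicative updates $a_{t+1}=(1-\eta r_t)^2\,a_t$ and $b_{t+1}=(1-x\eta r_t)^2\,b_t$ using $r_t<0$ together with positivity of $a_t,b_t$. The only caveat --- which applies equally to the paper's own one-line proof --- is that $(1-x\eta r_t)^2>1$ genuinely requires the sign condition $x>0$ (the paper's standing WLOG reduction stated in the proof of Theorem~\ref{thm: small-ss}), not the bounds $|x|<1$, $|r_t|<\mu$, $\eta\mu<1$ invoked in your closing sentence; for $x<0$ the factor does dip below $1$.
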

\begin{proof}
    Let $r_t \in [-\mu, 0)$, then it holds that $1-\eta r_t > 1$ and $1 - x \cdot \eta r_t > 1$. We obtain the following inequalities immediately from the update in Lemma~\ref{lemma: main-dynamics}:
    \begin{align*}
        a_{t+1}  &= (1 - \eta r_t)^2 \cdot a_t  > a_t, \qquad b_{t+1} = (1 - x\cdot \eta r_t)^2 \cdot b_t  \cdot b'_t > b_t,
    \end{align*} 
    and finish the proof.
    %The second part of the result directly follows from Proposition~\ref{prop: update-poly-1}
    %\begin{align*}
    %    a_{t+2} &= (1 - \eta r_{t+1})^2 \cdot (1 - \eta r_t)^2 \cdot a_t > a_t,
    %    b_{t+1} &= (1 - x\cdot \eta r_{t+1})^2 \cdot (1 - x\cdot \eta r_t)^2 \cdot b_t  > b_t.
    % \end{align*}
    %by plugging in $\tilde{\eta} = \eta \mu$, $\rho = r_t/\mu$ and $\theta = 1$ or $\theta = x$, respectively. 
\end{proof}

\begin{lemma} \label{lemma: two-iter-lower-bound}
    For any $r_t < 0$ and $r_{t+1} > 0$, it holds that
    $r_{t+2} \geq (1-\alpha_{t+1}) (1-\alpha_t) \cdot r_{t}$.       
\end{lemma}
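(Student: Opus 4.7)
The strategy is direct algebraic manipulation starting from the scalar update (\ref{eq: r-in-alpha-beta}). Applying that update once more at step $t+1$ and substituting the expression for $r_{t+1}$ into the linear part, one obtains after a short expansion
\[
r_{t+2} \;=\; (1-\alpha_{t+1})(1-\alpha_t)\, r_t \;+\; (1-\alpha_{t+1})\,\beta_t\, r_t^2 \;-\; \beta_{t+1}\, r_{t+1}^2,
\]
so proving the lemma is equivalent to verifying the single scalar inequality
\[
(1-\alpha_{t+1})\,\beta_t\, r_t^2 \;\geq\; \beta_{t+1}\, r_{t+1}^2.
\]

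To establish this I would exploit the hypothesis $r_t<0$, $r_{t+1}>0$ to extract sign information. Writing $r_{t+1} = -r_t\bigl[(1-\alpha_t) + \beta_t r_t\bigr]$ and using $-r_t>0$ together with $\beta_t>0$ (guaranteed in our regime since $\beta_t\approx\eta(2-\eta\mu)>0$ whenever $|r_t|$ and $s_t$ are small), the condition $r_{t+1}>0$ forces $(1-\alpha_t)+\beta_t r_t>0$, i.e. $1-\alpha_t > \beta_t|r_t| > 0$. Running the same argument one step later shows $1-\alpha_{t+1}>0$ whenever the oscillation persists, and it yields the sharp pointwise identity $r_{t+1} = |r_t|\bigl[(1-\alpha_t)-\beta_t|r_t|\bigr]$, and hence the bound $r_{t+1}^2 \leq (1-\alpha_t)^2 r_t^2$.

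Plugging this upper bound on $r_{t+1}^2$ into the target, the problem reduces to the $r_t$-free comparison
\[
(1-\alpha_{t+1})\,\beta_t \;\geq\; \beta_{t+1}\,(1-\alpha_t)^2.
\]
Using the explicit forms $\alpha_s = 2 - 2\eta\mu + 2 s_s$ and $\beta_s = \eta\bigl(2 - \eta\mu - \eta r_s - (1+x) s_s\bigr)$, together with the decay bounds from Lemma~\ref{lemma: initial-phase} which ensure $|r_s|, s_s \ll 1$ throughout the oscillation, the leading-order version of this inequality reads $(2\eta\mu-1)\cdot\eta(2-\eta\mu) \geq \eta(2-\eta\mu)\cdot(2\eta\mu-1)^2$, i.e. $2\eta\mu - 1 \leq 1$, which is precisely the regime $\eta\mu\leq 1$.

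The main obstacle is controlling the lower-order corrections cleanly. The cleanest route is not to use the coarse bound $r_{t+1}^2\leq(1-\alpha_t)^2r_t^2$ but the sharper inequality
\[
(1-\alpha_{t+1})\,\beta_t \;\geq\; \beta_{t+1}\,\bigl[(1-\alpha_t) - \beta_t |r_t|\bigr]^2,
\]
so that the "$-\beta_t|r_t|$" slack in the factor $(1-\alpha_t)-\beta_t|r_t|$ absorbs the first-order corrections in $r_s$ and $s_s$ appearing in $\alpha_s, \beta_s$. A secondary technical nuisance is confirming $\beta_t>0$ throughout the phase under consideration; this follows from $\eta\mu<1$ together with the a priori envelope control $|r_t|\lesssim\mu$ maintained inductively in the broader proof of Theorem~\ref{thm: small-ss}.
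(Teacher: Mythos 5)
Your opening expansion of $r_{t+2}$ agrees with the paper's Eq.~(\ref{eq: r-t-plus-2}), and the reduction to the single inequality $(1-\alpha_{t+1})\beta_t r_t^2 \geq \beta_{t+1} r_{t+1}^2$ is the right target. The gap is in how you then try to establish it. Your ``sharper inequality''
\[
(1-\alpha_{t+1})\,\beta_t \;\geq\; \beta_{t+1}\,\bigl[(1-\alpha_t) - \beta_t |r_t|\bigr]^2
\]
is not a refinement at all: since $r_{t+1}=|r_t|\bigl[(1-\alpha_t)-\beta_t|r_t|\bigr]$, multiplying it by $r_t^2$ reproduces the target verbatim, so that step is circular. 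What remains is the leading-order computation, which only shows the coarse version $(1-\alpha_{t+1})\beta_t\ge\beta_{t+1}(1-\alpha_t)^2$ up to unquantified $O(|r_t|+s_t)$ corrections; it degenerates to equality as $\eta\mu\to1$, and the condition it produces is exactly $\eta\mu\le1$. But the lemma is stated with no restriction on $\eta\mu$ and is invoked again in Lemma~\ref{lemma: large-ss-convergence} for the regime $\eta\mu\in(1,\tfrac{3\sqrt{2}-2}{2})$, where $s_t\ge\eta\mu-1$ is not negligible and your expansion does not apply. So as written the argument does not prove the lemma.

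The paper closes exactly this gap with an identity rather than an expansion: from $r_t+r_{t+1}=\alpha_t r_t-\beta_t r_t^2$ it writes $r_t^2-r_{t+1}^2=(r_t-r_{t+1})(\alpha_t r_t-\beta_t r_t^2)$ and regroups the residual $(1-\alpha_{t+1})\beta_t r_t^2-\beta_{t+1}r_{t+1}^2$ into
\[
-\alpha_t\beta_t\,r_tr_{t+1}\;-\;\beta_t^2\,r_t^2(r_t-r_{t+1})\;-\;(\alpha_t-\alpha_{t+1})\beta_t\,r_t^2\;+\;(\beta_t-\beta_{t+1})\,r_{t+1}^2,
\]
each summand of which is nonnegative using only $r_t<0<r_{t+1}$ together with the monotonicity facts $\alpha_t<\alpha_{t+1}$ and $\beta_t>\beta_{t+1}$ supplied by Lemma~\ref{lemma: a-b-bound} (plus the nonnegativity of $\alpha_t,\beta_t$ that the surrounding arguments provide). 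No smallness of $r_t$ or $s_t$ and no sign condition on $\eta\mu-1$ is needed. To salvage your route you would have to prove the exact inequality $(1-\alpha_{t+1})\beta_t r_t^2-\beta_{t+1}r_{t+1}^2\ge0$ directly, and the cleanest way to do so is precisely this regrouping.
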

\begin{proof}
     Let $r_t < 0$ and $r_{t+1} > 0$. To obtain the lower bound, we first write down the expansion of $r_{t+2}$ using Eq.(\ref{eq: r-in-alpha-beta}):
    \begin{align}  
        r_{t+2} & = - (1-\alpha_{t+1}) \cdot r_{t+1} - \beta_{t+1} \cdot r_{t+1}^2 \nonumber\\
        & = (1-\alpha_{t+1}) (1-\alpha_t) \cdot r_t + (1 - \alpha_{t+1}) \cdot \beta_t \cdot r_t^2 - \beta_{t+1} \cdot r^2_{t+1} \nonumber\\
        & = (1-\alpha_{t+1}) (1-\alpha_t) \cdot r_t + \beta_t \cdot (r_t^2 - r_{t+1}^2 ) - \alpha_{t+1} \beta_t \cdot r_t^2 + (\beta_t - \beta_{t+1}) \cdot r^2_{t+1} \label{eq: r-t-plus-2}
\end{align}
By observing Eq.~(\ref{eq: r-in-alpha-beta}) again, we have
\begin{align*}
    r_{t+1} + r_t = \alpha_t r_t - \beta_t  r_t^2,
\end{align*}
which allows the following decomposition of $r^2_t - r^2_{t+1}$
\begin{align*}
    r^2_t - r^2_{t+1} = (r_t - r_{t+1}) \cdot (r_t + r_{t+1}) = (r_t - r_{t+1}) \cdot (\alpha_t r_t - \beta_t r_t^2).
\end{align*}
We plug the identity back to Eq.(\ref{eq: r-t-plus-2}) and obtain
\begin{align*}
    r_{t+2} & = (1-\alpha_{t+1}) (1-\alpha_t) \cdot r_t + \beta_t \cdot (r_t - r_{t+1})  \cdot (\alpha_t r_t - \beta_t r_t^2) - \alpha_{t+1} \beta'_t \cdot r_t^2 + (\beta_t - \beta_{t+1}) \cdot r^2_{t+1} \\
    & = (1-\alpha_{t+1}) (1-\alpha_t) \cdot r_t + \alpha_t \beta_t \cdot r_t (r_t - r_{t+1}) - {\beta}_t^2 \cdot r_t^2 (r_t - r_{t+1}) \\
    & \qquad\qquad\qquad\qquad\qquad\ \ \ - \alpha_{t} \beta_t \cdot r_t^2 - (\alpha_t - \alpha_{t+1}) \beta_t \cdot r_t^2 + (\beta_t - \beta_{t+1}) \cdot r^2_{t+1} \\
    & = (1-\alpha_{t+1}) (1-\alpha_t) \cdot r_t - \alpha_t \beta_t \cdot r_tr_{t+1} - {\beta}_t^2 \cdot r_t^2 (r_t - r_{t+1}) \\
    & \qquad\qquad\qquad\qquad\qquad\ \ \ - (\alpha_t - \alpha_{t+1}) \beta_t \cdot r_t^2 + (\beta_t - \beta_{t+1}) \cdot r^2_{t+1}.
\end{align*}
We notice the following facts:
\begin{align*}
    r_{t} - r_{t+1} < 0, \qquad \alpha_t - \alpha_{t+1} < 0, \qquad \beta_t - \beta_{t+1} > 0.
\end{align*}
The first inequality is true due to $r_t < 0$ and $r_{t+1} > 0$. For the second one, it is easy to verify
\begin{align*}
    \alpha_t - \alpha_{t+1} = 2\eta c_x \cdot \big(  b_t -  b_{t+1}\big) < 0
\end{align*}
due to Lemma~\ref{lemma: a-b-bound}. % and the following inequalities $$a_{t+1} = (1 + \eta r_t)^2 \cdot a_t < a_t, \qquad b_{t+1} = (1 + x\cdot \eta r_t)^2 \cdot b_t < b_t.$$ 
For the last inequality, we expand using the definition of $\beta_t$
\begin{align*}
    \beta_t - \beta_{t+1} &= \eta^2 c'_x \cdot \big(b_t -  b_{t+1} \big) - \eta^2 \cdot \big(r_t - r_{t+1}\big) \\
    & = - \eta^2 (1+x^2) \cdot \big((a_t-a_{t+1}) + x^3 \cdot(b_t-b_{t+1})\big) \\
    & > 0.
\end{align*}
where the second equality comes from $r_t = (1 + x^2) \cdot \big(a_t + x\cdot b_t\big) - \mu$, and the inequality is due to Lemma~\ref{lemma: a-b-bound} again. Combining the above facts and the expansion of $r_{t+2}$ in Eq.~(\ref{eq: r-t-plus-2}), we attain the following inequality
\begin{align*} %\label{eq: two-iter-lower-bound}
    r_{t+2} \geq (1-\alpha_{t+1}) (1-\alpha_t) \cdot r_t.
\end{align*}
\end{proof}

With the help of the above two lemmas, we are able to show the contraction of $r_t$'s envelope when $t \geq t_0$.
\begin{lemma} \label{lemma: small-ss-convergence}
    Suppose that $\eta\mu < 1$ and $r_t r_{t+1} < 0$ holds for any $t \geq t_0$. The iteration of $(r_t, s_t)$ converges to $(0, s_{\infty})$ in a linear rate as
    \begin{align*}
        |r_t| \leq \exp\left( -\Theta(\mu\eta) \cdot (t - t_0) \right) \cdot |r_{t_0}|.
    \end{align*}
    Moreover, it holds that $\lim_{t \to \infty} s_t \leq C \cdot \alpha^{\Theta(1)}$.
\end{lemma}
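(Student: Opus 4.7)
The plan is to combine the two-step lower bound of Lemma~\ref{lemma: two-iter-lower-bound} with a bootstrap argument that controls $|r_t|$ and $s_t$ simultaneously. Since $\alpha_t = 2 - 2\eta\mu + 2s_t$, one has $1-\alpha_t = (2\eta\mu-1) - 2s_t$. When $\eta\mu \in (0,1)$ and $|s_t|$ is sufficiently small, $|1-\alpha_t|$ is strictly less than $1$, and
\[
(1-\alpha_t)(1-\alpha_{t+1}) \;\leq\; (2\eta\mu-1)^2 + O(|s_t|+|s_{t+1}|) \;\leq\; 1 - \Theta(\mu\eta),
\]
so each pair of steps contracts $|r_t|$ by a factor strictly below $1$.

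Given the hypothesis $r_t r_{t+1} < 0$ for $t \geq t_0$, the indices $t$ and $t+2$ produce $r_t,r_{t+2}$ of the same sign. Applying Lemma~\ref{lemma: two-iter-lower-bound} with, say, $r_t<0$ gives $r_{t+2} \geq (1-\alpha_t)(1-\alpha_{t+1})\, r_t$, and since both sides are negative this rearranges into $|r_{t+2}| \leq (1-\alpha_t)(1-\alpha_{t+1})\,|r_t|$. Iterating across pairs and using the uniform upper bound on the two-step contraction factor yields the claimed rate $|r_t| \leq \exp\!\bigl(-\Theta(\mu\eta)(t-t_0)\bigr)\,|r_{t_0}|$.

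To justify that $|s_t|$ stays small — taken for granted above — I would proceed by bootstrap. Lemma~\ref{lemma: initial-phase} already gives $b_{t_0} \leq \mu^{-\Theta(1)}\alpha^{2-C}$, so $s_{t_0} = \eta c_x b_{t_0}$ is polynomially small in $\alpha$. From $b_{t+1} = (1-x\eta r_t)^2 b_t$ in Lemma~\ref{lemma: main-dynamics}, telescoping and the elementary estimate $(1-x\eta r_t)^2 \leq \exp(2|x\eta r_t|)$ yield
\[
b_t \;\leq\; b_{t_0}\,\exp\!\Bigl(\,2|x|\eta \sum_{k=t_0}^{t-1}|r_k|\Bigr).
\]
Under the inductive bound $|r_k| \leq \bar\rho^{\,(k-t_0)/2}\,|r_{t_0}|$ for some $\bar\rho \in ((2\eta\mu-1)^2,1)$, the sum is controlled uniformly in $t$ by a convergent geometric series of magnitude $O(|r_{t_0}|) = O(\mu)$. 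Hence $b_t = O(b_{t_0})$ and so $|s_t| = O(\alpha^{\Theta(1)})$ throughout; passing to the limit gives $s_\infty \leq C\,\alpha^{\Theta(1)}$.

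The main obstacle is closing the bootstrap rigorously, since the geometric decay of $r_t$ needs $|s_t|$ small, while the smallness of $|s_t|$ is derived from that very decay. I resolve this by induction on $t$, picking $\bar\rho$ with enough slack above $(2\eta\mu-1)^2$ so that the $O(|s_t|)$ perturbation of the base contraction is absorbed; for $\alpha$ small enough this is straightforward because $|s_t| \leq O(\alpha^{\Theta(1)})$ at every step. A minor technical wrinkle is that $c_x = x(1-x)(1+x^2)$ can take either sign, but only $|s_t|$ enters the quadratic $(1-\alpha_t)^2$, so the sign of $c_x$ does not affect the argument.
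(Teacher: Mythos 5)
Your proposal is correct and shares the paper's overall skeleton: both rest on Lemma~\ref{lemma: two-iter-lower-bound} for the two-step bound $r_{t+2} \geq (1-\alpha_t)(1-\alpha_{t+1})\,r_t$, convert it to a contraction of $|r_t|$ along the negative subsequence, and then bound $s_t$ by exponentiating a convergent sum of the $|r_k|$ (the paper sums $r_k^2$ after a small cancellation, you sum $|r_k|$; both are geometric and give $s_\infty = O(\alpha^{\Theta(1)})$). Where you genuinely diverge is in how the factor $(1-\alpha_t)(1-\alpha_{t+1})$ is controlled. The paper never requires $s_t$ to be small for the contraction: it uses $s_t = \eta c_x b_t \geq 0$ (after reducing to $x\in(0,1)$ so $c_x>0$) together with the monotonicity of $b_t$ (Lemma~\ref{lemma: a-b-bound}) to get $\alpha_{t+1}\geq\alpha_t\geq 2(1-\mu\eta)>0$, and separately extracts $1-\alpha_t>0$ from the oscillation hypothesis via the identity $-r_{t+1}/r_t = 1-\alpha_t+\beta_t r_t$ and a two-case analysis on the sign of $\beta_t$; this yields $0\leq(1-\alpha_t)(1-\alpha_{t+1})\leq(2\mu\eta-1)^2$ with no circularity. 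You instead bound $|1-\alpha_t|\leq|2\eta\mu-1|+2|s_t|$ and close a bootstrap in which the geometric decay of $r_t$ keeps $|s_t|=O(\alpha^{\Theta(1)})$ and vice versa. Your route buys robustness — it does not care about the sign of $c_x$ and avoids the $\beta_t$ case analysis — at the price of leaning on $\alpha$ being small (which the surrounding theorem assumes anyway, via Lemma~\ref{lemma: initial-phase}) and of a more delicate induction; the paper's route is cleaner because positivity of $s_t$ alone pins down $\alpha_t$ from below. Two small points to tidy up: you should state explicitly that the positive iterates are also controlled, e.g.\ $|r_{t+1}| = |1-\alpha_t+\beta_t r_t|\,|r_t| = O(|r_t|)$, so that the stated rate holds for all $t$ and not just the negative subsequence; and note that the sign constraint $r_{t+2}<0$ from the hypothesis already forces $(1-\alpha_t)(1-\alpha_{t+1})\geq 0$, so the absolute value in your contraction factor is harmless.
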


\begin{proof}
    Let us assume $r_tr_{t+1}$ holds for any $t \geq t_0$. To show the convergence of $r_t$, we only need to consider bounding $|r_t|$ when $r_t$ is negative, because for any $r_{t+1} > 0$, we have directly $|r_{t+1}| \leq O(|r_t|)$. 

    Now consider any $t \geq t_0$ with $r_t < 0$. We invoke Lemma~\ref{lemma: two-iter-lower-bound} to obtain the lower bound for $r_{t+2}$, which is also negative by our assumption:
    \begin{align*}
        r_{t+2} \geq (1 - \alpha_t) (1-\alpha_{t+1}) \cdot r_t.
    \end{align*}
    To proceed, it requires to bound $\alpha_t$ and $\alpha_{t+1}$. We first show $1-\alpha_t > 0$ and $1-\alpha_{t+1} > 0$ holds by discussing two cases. In the first case, if $\beta_t \geq 0$, it holds that
    \begin{align*}
        0 < -\frac{r_{t+1}}{r_{t}} = 1 - \alpha_{t} + \beta_{t} r_{t} <  1 - \alpha_{t} 
    \end{align*}
    because $r_t < 0$. Then due to Eq.~(\ref{eq: r-in-alpha-beta} and fact $r_{t+2}, r_t <0 $ we must also have $1-\alpha_{t+1} > 0$. In the second case, suppose $\beta_{t} <0$. By above argument we attain $\beta_{t+1} < \beta_t < 0$. Similarly, because  $r_{t+1}r_{t+2}<0$ and $r_{t+1} >0$, we attain
    \begin{align*}
        0 < -\frac{r_{t+2}}{r_{t+1}} = 1 - \alpha_{t+1} + \beta_{t+1} r_{t+1} <  1 - \alpha_{t+1}. 
    \end{align*}
    As a result, $1-\alpha_t > 0$ is also true.
    %For $r_t < 0$, we compare $\alpha_t$ and $\alpha'_t$ to get
    %\begin{align*}
    %    1 - \alpha_t = 1 - \delta_t - 2\eta r_t + (1+x^2) \cdot 4\eta\Big(a'_t+ x^2 \cdot b_t'\Big) > 1 - \delta_t \geq 0.
    %\end{align*}
    %Because $r_t$ and $r_{t+1}$ have different sign, we conclude that $1 - \alpha_{t+1}' < 0$ also holds. 
    We also need to lower bound $\alpha_t$ and $\alpha_{t+1}$. Since $\mu\eta < 1$, by their definition and Lemma~\ref{lemma: a-b-bound} we have immediately
    \begin{align*}
        \alpha_{t+1} \geq \alpha_t \geq 2(1 -\mu\eta) > 0.
    \end{align*}
    As a result of the above discussion, we obtain $|r_{t+2}| \leq \big(1 - (2\mu\eta - 1)\big)^2 \cdot |r_t|$ because $2 \mu \eta - 1 < 1$ for any $\mu\eta \in (0, 1)$. This suggests
    \begin{align*}
        |r_t| \leq \exp\left( \Theta(2\mu\eta - 1) \cdot (t - t_0) \right) \cdot |r_{t_0}|.
    \end{align*}
    It remains to bound $s_t$'s. To this end, we focus on the iteration $t$ with $r_t > 0$ instead of $r_t < 0$, because from Lemma~\ref{lemma: a-b-bound} $s_{t} > s_{t-1}$ if $r_t > 0$. For any such $t$, we compute
    \begin{align*}
        \frac{s_{t+2}}{s_t}  = (1 - x\cdot \eta r_{t+1})^2 \cdot (1 - x\cdot \eta r_t)^2 & \leq \exp\left(-x\eta \cdot (r_t + r_{t+1} ) \right) \\
        & = \exp\left(-x\eta \cdot (r_t + r_{t+1} ) \right) \\ &  = \exp\left(-x\eta \alpha_t r_t + x\eta \beta_t r^2_t  \right) \\
        & \leq \exp\left( 2x\eta^2 r_t^2 \right)
    \end{align*}
    where the second equality is from $r_t + r_{t+1} = -\alpha_t r_t + \beta_t r^2_t$, an equivalent form of Eq.~(\ref{eq: r-in-alpha-beta}), and the last inequality come from $\alpha_t > 0$ and $r_t > 0$ and
    \begin{align*}
        \beta_t = 2\eta - \eta^2\big(\mu + r_t - c'_x b_t\big) = 2\eta - \eta^2 (1+x^2) \cdot \big( a_t + x^3 \cdot b_t\big) < 2\eta.
    \end{align*}
    Repeating the above step, we obtain for any $t$ with $r_t > 0$
    \begin{align*}
        s_{t} = \exp\left( 2x\eta^2 \cdot\sum_{\substack{i=t_0 \\ i \text{ even}}}^{t-2} r^2_t \right) \cdot s_{t_0}  & \leq \exp\left( 2x\eta^2r^2_{t_0}  \cdot\sum_{\substack{i=t_0 \\ i \text{ even}}}^{t-2}  e^{ - \Theta(2\mu\eta -1)}\right) \cdot s_{t_0} \\ 
        & \leq \exp\left( 2x\eta^2r^2_{t_0}  \cdot \frac{1}{1 - e^{ - \Theta(2\mu\eta -1)} } \right) \cdot s_{t_0} \\
        & \leq \exp\left( \frac{2x\eta^2r^2_{t_0} }{ \Theta(2\mu\eta -1)}  \right) \cdot s_{t_0}
    \end{align*}
    Since $|r_{t_0}| \leq \mu$ and $s_{t_0} = \eta c_x \cdot b_{t_0} \leq \mu^{-\Theta(1)} \cdot \alpha^{\Theta(1)}$ (due to Lemma~\ref{lemma: initial-phase}), we conclude that
    \begin{align*}
        \lim_{t\to\infty} s_t \leq \exp\left( \frac{2x\eta^2\mu^2 }{ \Theta(2\mu\eta -1)}  \right) \cdot s_{t_0} = \exp\left( \frac{2x}{ \Theta(2\mu\eta -1)}  \right) \cdot s_{t_0} = O(\alpha^{\Theta(1)}),
    \end{align*}
    where we hide the dependence on $\eta,\mu,x$ because we want to focus on the asymptotic property on $\alpha$ only.
\end{proof}

Finally, we put every piece together and prove the result for $\eta\mu < 1$.
\begin{proof} [Proof of Theorem~\ref{thm: small-ss}]
    We first show part (1) is true. From Lemma~\ref{lemma: pq-dynamics}, Lemma~\ref{lemma: main-dynamics}, we can decompose $\evbeta_{\vw_{t}}$ as
    \begin{align*}
        \evbeta_{\vw_{t}} = \vw^2_{t,+} - \vw^2_{t, -} & = (p^0_t - q^0_t) \cdot \evbeta_0 + (p^1_t - q^1_t) \cdot \evbeta_1 + \evbeta_* %\\
        %&   \cdot \evbeta_0 + (p^1_t - q^1_t) \cdot \evbeta_1
    \end{align*}
    where the vectors are $\evbeta_0 =(1, x)$, $\evbeta_1 = (x, -1)$ and the coefficients can be computed as
    \begin{align*}
        p^0_t - q^0_t & = a_t - a'_t - \frac{\mu}{1+x^2} +  x \cdot (b_t - b_t') , \\
        p^1_t - q^1_t & = - x\cdot \left(a_t - a'_t - \frac{\mu}{1+x^2}\right) + b_t - b_t'.
    \end{align*}
    We now proceed to discuss the limits of the coefficients for $\evbeta_0$ and $\evbeta_1$.

    We notice it suffices to consider $x > 0$: by observing Lemma~\ref{lemma: main-dynamics}, the iteration with $x = -a$, $a>0$ can be regarded as simply exchanging $b_t/b_t'$ of an iteration with $x = a$. Starting from initialization
    \begin{align*}
        a_0 = a'_0 = b_0 = b'_0 = \frac{\alpha^2}{2(1+x^2)}
    \end{align*}
    (which corresponds to $\vw_{t,\pm} = \alpha \vone$), Lemma~\ref{lemma: initial-phase} suggests that $a'_t$ and $b'_t$ will decrease and converge to $\frac{\alpha^2}{2(1+x^2)}$ and $0$ very quickly. In the meanwhile, $a_t$ , although remains negative, increases to the level of $-\Theta(\mu)$. Therefore, we can simplify the analysis by discard early iterations and regarding $a'_t,b'_t$ as constantly $0$, which leaves us a discrete dynamical system of two variables $(a_t, b_t)$ and it holds that $r_t = (1+x^2) \cdot \big( a_t + x \cdot b_t \big) - \mu$ for any $t \geq t_0$, where $t_0$ is by our assumption the start of change of sign.
    
    We notice the following inner products
    \begin{align} \label{eq: inner-product}
        \langle \evbeta_0, \vx \rangle = 1 + x^2, \qquad \langle \evbeta_1, \vx \rangle = 1, \qquad \text{and} \qquad \langle \evbeta^*, \vx \rangle = \mu.
    \end{align}
    This allows us to express $r_t$ as
    \begin{align*}
        r_t =  \langle \evbeta_{\vw_t}, \vx \rangle - \mu = & \big(p^0_t - q_t^0\big) \cdot \langle \evbeta_0, \vx \rangle + \big(p^1_t - q_1^0\big) \cdot \langle \evbeta_0, \vx \rangle +  \langle \evbeta^*, \vx \rangle - \mu \\
        = & (1 + x^2) \cdot \big(p^0_t - q_t^0\big)
    \end{align*}
    We now invoke Lemma~\ref{lemma: small-ss-convergence} and obtain 
    \begin{align*}
        \lim_{t\to\infty} p^0_t - q^0_t \propto  \lim_{t\to\infty} r_t = 0,
    \end{align*}
    as well as 
    \begin{align*}
        \lim_{t\to\infty} p^1_t - q^1_t & = \lim_{t\to\infty} \left( - x\cdot \left(a_t - a'_t - \frac{\mu}{1+x^2}\right) + b_t - b_t' \right) \\
        & = - x \cdot \lim_{t\to\infty}\left( (a_t - a'_t) + b_t - b_t' \right) \\
        & =  - x \cdot \lim_{t\to\infty}  r_t + x^2 \cdot \lim_{t\to\infty}  \left(b_t - b_t'\right) = x^2 \cdot \lim_{t\to\infty}  \left(b_t - b_t'\right) \leq O(\alpha^C).
    \end{align*}
    where $C > 0$ is some constant. The above results suggest that $\evbeta_{\vw_t}$ converges to the following limit 
    \begin{align*}
        \evbeta_{\infty} := \lim_{t\to\infty} \big(p^0_t - q_t^0 \big) \cdot \evbeta_0 + \big(p^1_t - q_t^1 \big) \cdot \evbeta_1 + \evbeta^* = \evbeta^* + \evbeta_1 \cdot x^2 \cdot \lim_{t \to \infty} b_t - b'_t.
    \end{align*}
    It is easy to verify $\evbeta_{\infty}$ is a linear interpolator $\langle \evbeta_{\infty}, \vx \rangle = y$ due to (\ref{eq: inner-product}). Therefore, the convergence to $\evbeta_{\infty}$ can be characterized in a non-asymptotic manner using Lemma~\ref{lemma: small-ss-convergence} again: for any $t \geq t_0$,
    \begin{align*}
        | \langle \evbeta_{\vw_t} - \evbeta_{\infty}, \vx \rangle| =  | \langle \evbeta_{\vw_t} - \evbeta^*, \vx \rangle| = |r_t| \leq  C \cdot \exp\left(- \Theta(2\mu\eta - 1) \cdot (t - t_0)\right) |r_{t_0}|.
    \end{align*}
    Moreover, it holds that
    \begin{align*}
        \left\| \evbeta_{\infty} - \evbeta^* \right\| =  \lim_{t\to\infty}  (p^1_t - q^1_t)  \cdot \left\| \evbeta_1  \right\| =O(\alpha^C).
    \end{align*} 
\end{proof}

\subsection{Proof of Theorem~\ref{thm: large-ss}: Regime $\eta\mu \in (1, \tfrac{3\sqrt{2}-2}{2})$} \label{apdx: large-ss}
This subsection contains the EoS convergence proof under the much harder case with $\eta\mu > 1$. We continue to use the $(r_t, s_t)$ update in Appendix~\ref{apdx: small-ss}.
%\begin{theorem} \label{thm: large-ss}
%    Suppose $a'_t = b'_t = 0$ for any $t$. Consider the recurrence of $(a_t, b_t)$ as in \ref{eq: main-dynamics} with initialization $a_0 \in (-\mu, 0)$ and $b_0 \in (0,o(1))$. If $\eta\mu \in [1, 2)$ and $x \in (0, \tfrac{1}{\mu\eta})$, then the following statements are true: 
%    \begin{enumerate}[(1).,leftmargin=*]
%        \item $\lim_{t\to\infty} r_t = 0$;
%        \item  $\limsup_{t\to\infty} b_t = O_x\big(\tfrac{\mu\eta - 1}{\eta}\big)$.
%    \end{enumerate}
%\end{theorem}

\paragraph{Proof outline.}
Compared with the regime $\eta \mu < 1$, it poses more challenging tasks when we attempt to provide a convergence proof under $\eta \mu > 1$. This is because, by the end of the initial phase, the iterate of $r_t$ does not start to contract. On the contrary, the envelope of the oscillating 
$r_t$ can even increase, which suggests  $|r_{t+2}| > |r_t|$ for some certain $t$. Therefore, in contrast to the setting $\mu\eta<1$, we can not directly apply Lemma~\ref{lemma: two-iter-lower-bound}.

We observe Lemma~\ref{lemma: two-iter-lower-bound} again and realize that the convergence is decided by the following criterion
\begin{align*}
    \alpha_t \geq 0, \quad \text{or equivalently,} \quad s_t \geq \mu\eta - 1 \quad \Longrightarrow \quad \text{contraction of } |r_t| \text{ happens}.
\end{align*}
Recall that $\alpha_t = 2 - 2\eta\mu + 2s_t$. From the discussion on the initial phase, it is shown that $s_t$ (or equivalently $b_t$) starts with a small value at $t_0$, which implies in early phases $\alpha_t < 0$ and the contraction criterion is not satisfied. This is why convergence does not happen in the second phase in the case of $\mu\eta > 1$. Therefore, the central task is to find under which condition $b_t$ and $s_t$ increase sufficiently during the second phase such that the phase transition from $\alpha_t < 0$ to $\alpha_t > 0$ will eventually occur. 

Through a fixed-point analysis of the discrete dynamical system, we are able to show that phase transition will happen under $x \in (0, \tfrac{1}{\mu\eta})$, and subsequently we could employ Lemma~\ref{lemma: two-iter-lower-bound} to establish the convergence. %One might ask what will happen if the condition is not satisfied. Again, using bifurcation analysis, under condition $x \in (\tfrac{1}{\mu\eta}, 1)$, it can be established that the iterates $(r_t, s_t)$ converge to a stable $2$-periodic orbit for step-size $\mu\eta \in [1,)$. If the $\eta\mu$ is further increased from , the stable orbit will consequentially be $2^n$-periodic and finally becomes Li-Yorke chaotic. 

%The phase transition implies there exists a $\mathfrak{t}$ such that $\alpha_t > 0$ holds for any $t \geq \mathfrak{t}$. Therefore, we can establish the convergence of $r_t$ by utilizing Lemma~\ref{}, which is similar to the proof method under regime $\eta\mu < 1$.

%\paragraph{Relationship to \cite{}.}

\paragraph{Auxiliary sequence.}
Before we prove the phase transition will occur when $x \in (0, \tfrac{1}{\mu\eta})$, it requires characterizing some properties of the $(r_t, s_t)$ iterate using the tool of discrete dynamical system and an auxiliary sequence defined as
\begin{align*}
    \rho_{t+1} = g(\rho_t, 0)
\end{align*}
with initialization $\rho_{t_0} = r_{t_0}$. The new sequence can be regarded as setting $s_t \equiv 0$ for any $t$. We start by presenting some basic properties of $\rho_t$ sequence and mapping $g(\cdot, 0)$.
\begin{lemma} \label{lemma: auxiliary}
    %The sequence $\{\rho_t\}_{t=0}^T$ generated by the following iteration
    %\begin{align*}
    %    \rho_{t+1} = g(\rho_t, 0).
    %\end{align*}
    %with initialization $\rho_0 \in [-\mu, \tfrac{1}{\eta}]$. 
    The following statements are true for the iteration of $\rho_t$:
    \begin{enumerate}[(1).,leftmargin=2em]
        \item The local minimum and maximum of $g(\rho, 0)$ are $\rho = \tfrac{1}{\eta}$ and $\rho = -\tfrac{2\eta\mu - 1}{3\eta}$, respectively. As a result, the mapping $g(r, 0)$ is monotonically decreasing and sign-changing ($\rho \cdot g(\rho, 0) \leq 0$) in the range of $\rho \in [-\tfrac{2\eta\mu - 1}{3\eta}, \tfrac{1}{\eta}]$;
        %\item  For any $t$, $r_t \in [-\mu, \tfrac{1}{\eta}]$;
        \item  The mapping $g(\rho, 0)$ has $2$-periodic points at $\rho_{\pm} = \frac{1-\mu\eta \pm \sqrt{\mu\eta^2 + 2\mu\eta - 3}}{2\eta}$.
    \end{enumerate}
\end{lemma}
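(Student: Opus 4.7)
The plan is to prove both parts by direct algebraic manipulation of the cubic
$g(\rho,0) = \eta^{2}\rho^{3} - (2\eta - \mu\eta^{2})\rho^{2} - (2\mu\eta - 1)\rho$,
which has positive leading coefficient $\eta^{2}$ and hence runs from $-\infty$ to $+\infty$.

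For part (1), I would compute $\partial_{\rho} g(\rho,0) = 3\eta^{2}\rho^{2} - 2(2\eta - \mu\eta^{2})\rho - (2\mu\eta - 1)$ and solve the resulting quadratic for its critical points. A short computation shows the discriminant simplifies to the perfect square $4\eta^{2}(\mu\eta + 1)^{2}$, so the quadratic formula produces exactly the two stated critical points $\rho = 1/\eta$ and $\rho = -(2\mu\eta - 1)/(3\eta)$. The first is always larger (trivial to check), hence, by the positive leading coefficient, it must be the local minimum while the other is the local maximum. Monotone decrease on the closed interval between them is then automatic. Under $\mu\eta > 1$ the point $0$ lies strictly inside this interval, so $g(0,0) = 0$ combined with strict monotone decrease through $0$ forces $\rho \cdot g(\rho,0) \leq 0$ on the whole interval, giving the sign-changing property.

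For part (2), I would use the observation that any $2$-cycle $\{\rho_{+}, \rho_{-}\}$ with $\rho_{+} \neq \rho_{-}$ is the root set of a unique quadratic $f(\rho) = \rho^{2} - s\rho + p$ with $s = \rho_{+}+\rho_{-}$ and $p = \rho_{+}\rho_{-}$, and that the swapping property $g(\rho_{\pm},0) = \rho_{\mp}$ is equivalent to the polynomial identity $g(\rho,0) \equiv s - \rho \pmod{f(\rho)}$. Reducing $g(\rho,0)$ modulo $f$ via $\rho^{2} \equiv s\rho - p$ and $\rho^{3} \equiv (s^{2}-p)\rho - sp$ collapses this identity into two polynomial conditions on the pair $(s,p)$. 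Elimination quickly yields the relevant solution $s = (1 - \mu\eta)/\eta$ and $p = (1-\mu\eta)/\eta^{2}$. Solving $\rho^{2} - s\rho + p = 0$ and noting that the inner discriminant factors as $(1-\mu\eta)^{2} - 4(1-\mu\eta) = (\mu\eta - 1)(\mu\eta + 3)$ then produces the stated formula $\rho_{\pm} = [(1 - \mu\eta) \pm \sqrt{\mu^{2}\eta^{2} + 2\mu\eta - 3}]/(2\eta)$ (the radicand in the lemma statement is thus $\mu^{2}\eta^{2} + 2\mu\eta - 3$).

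The hard part is part (2): a direct attack via expanding $g(g(\rho,0),0) - \rho$ as a degree-$9$ polynomial and dividing out the three fixed points $\rho \in \{0, 2/\eta, -\mu\}$ of $g(\cdot,0)$ is algebraically painful, whereas the modular-reduction trick reduces the problem to a tidy system in the symmetric functions of the cycle. A subtle check is that $\rho_{+} \neq \rho_{-}$, which requires the discriminant $(\mu\eta - 1)(\mu\eta + 3) > 0$; this holds exactly when $\mu\eta > 1$, consistent with the regime of interest and reflecting that the $2$-cycle is born from a period-doubling bifurcation at $\mu\eta = 1$.
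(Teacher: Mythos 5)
Your proof is correct. Part (1) follows essentially the same path as the paper: differentiate the cubic, note that the discriminant of $g'(\rho,0)$ is the perfect square $4\eta^2(\mu\eta+1)^2$, read off the critical points $1/\eta$ and $-(2\mu\eta-1)/(3\eta)$, and combine the positive leading coefficient with $g(0,0)=0$ to get monotone decrease and the sign change on the interval between them (the paper instead checks $g'(0,0)=1-2\mu\eta<0$; the content is the same, though you are more explicit about which critical point is the minimum and which is the maximum). Part (2) is where you genuinely diverge: the paper simply asserts that solving the degree-$9$ equation $r=g(g(r,0),0)$ yields $\rho_\pm$, evidently by symbolic computation as in its Lemma~\ref{lemma: fixed-orbit}, whereas you encode the $2$-cycle by its elementary symmetric functions $(s,p)$ and impose $g(\rho,0)\equiv s-\rho\pmod{\rho^2-s\rho+p}$, which collapses to a small polynomial system; the solution $s=(1-\mu\eta)/\eta$, $p=(1-\mu\eta)/\eta^2$ checks out by direct substitution, and the discriminant $s^2-4p=(\mu\eta-1)(\mu\eta+3)/\eta^2$ gives exactly the stated $\rho_\pm$ together with the observation that they are distinct precisely when $\mu\eta>1$. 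Your route buys a fully hand-verifiable derivation, makes the swap property $g(\rho_\pm,0)=\rho_\mp$ (on which Lemma~\ref{lemma: r-s-bound} later relies) explicit, and avoids the degree-$9$ expansion; its only cost is that the elimination step could in principle admit further solutions, so you exhibit this $2$-cycle rather than classify all of them --- but existence is all the lemma asserts, so nothing is lost. You are also right that the radicand in the statement should read $\mu^2\eta^2+2\mu\eta-3$; the paper's own proof and its subsequent uses of $\rho_\pm$ confirm this.
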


\begin{proof}
    To prove (1), we take derivative of $g(\rho,0)$ with respect to the first variable:
    \begin{align*}
        g'(\rho, 0) = 3\eta^2 \rho^2 + 2( \mu\eta^2 - 2\eta)\rho - 2\mu\eta + 1.
    \end{align*}
    Because $g(\rho,0)$ is a degree-$3$ polynomial in $\rho$, solving equation $g'(\rho, 0) =  0$ suggests that $\tfrac{1}{\eta}$ and $-\tfrac{2\eta\mu - 1}{3\eta}$ are the only two stationary points of $g(\cdot, 0)$. Moreover, we compute the value and derivative at $\rho = 0$ as
    \begin{align*}
        g(0,0) = 0 , \qquad g_{\rho}'(0,0) = 1 - 2\mu\eta < 0,
    \end{align*}
    as a result, $g(\rho,0) \cdot \rho < 0$ holds and $g(\rho,0)$ is monotonically decreasing when $\rho \in [-\tfrac{2\eta\mu - 1}{3\eta}, \tfrac{1}{\eta}]$.

    %Moreover, using second order derivative  $g''(s, 0) = 2\eta^2\mu - 4\eta + 6\eta^2s$, we calculate 
    %\begin{align*}
    %    g''(\tfrac{1}{\eta}, 0) = 2\eta(\eta\mu + 1) > 0 , \qquad  g''(-\tfrac{2\eta\mu - 1}{3\eta}, 0) = -2\eta(\eta\mu + 1) < 0,
    %\end{align*}
    %which indicates that $\tfrac{1}{\eta}$ and $-\tfrac{2\eta\mu - 1}{3\eta}$ are local minimum and maximum point. Moreover, we calculate
    %\begin{align*}
    %    g(\tfrac{1}{\eta}, 0) < - \mu < \tfrac{2\mu}{3}  < \tfrac{2\eta\mu - 1}{3\eta}, \qquad
    %    g(-\tfrac{2\eta\mu - 1}{3\eta}, 0) = \frac{(\mu\eta + 4)(2\mu\eta-1)^2}{27\eta} \leq \frac{6}{27\eta} < \frac{1}{\eta}.
    %\end{align*}
    %Because there are no other stationary points, we conclude that $r_t \in [-\mu, \tfrac{1}{\eta}]$.
    
    To prove (2), we solve fixed-point equation $r = g(g(r,0),0)$ to obtain a pair of non-trivial solutions:
    \begin{align*}
        \rho_{\pm} = \frac{1-\mu\eta \pm \sqrt{\mu^2\eta^2 + 2\mu\eta - 3}}{2\eta}.
    \end{align*}
   % which are therefore the $2$-periodic points of mapping $g(r, 0)$. It remains to find the threshold where period doubling happens. We first tackle the transition from $1$-periodic to $2$-periodic. We notice 
%\begin{align}
%        g'(0, 0) =  1 - 2\mu\eta
%    \end{align}
%    Using Theorem in \cite{}, .

%    Now, for the transition from $2$-periodic to $4$-periodic, we calculate
%    \begin{align*}
%        g'(r_+, 0) = g'(r_-, 0) = - 2\mu^2\eta^2 - 4\mu\eta + 7 
%    \end{align*}
%    Solving $ g'(r_+, 0) == -1$ gives $\mu\eta = \sqrt{5}-1$. Since we care for $\mu\eta \in [0,2]$.
\end{proof}

When $\eta\mu \in (1, \tfrac{3\sqrt{2}-2}{2} )$, the sequence $\rho_t$ can be regarded as a "reference" of $r_t$ because its envelope contains $r_t$'s envelope. This means $r_t$, $\rho_t$ have the same sign and $|r_t| \leq |\rho_t|$ holds for any $t$ if they share a proper initialization. This is stated in the next lemma.
\begin{lemma} \label{lemma: r-s-bound}
    Consider the sequence $(r_t, s_t)$ and $\rho_t$ with same initialization $r_{t_0} = \rho_{t_0} \in (-\tfrac{\mu}{2}, 0)$ and $s_{t_0} \geq 0$. If $\eta\mu \in \big(1, \frac{3\sqrt{2} - 2}{2}\big)$ and $r_tr_{t+1} < 0$ holds for any $t\geq t_0$, then the following statements are true for any $t\geq t_0$: (1) $r_t$ and $\rho_t$ have the same sign, (2) $|r_t| \leq |\rho_t|$ and (3) $r_t$, $\rho_t \in [-\rho_-, \rho_{+}]$.
\end{lemma}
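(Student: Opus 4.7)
The plan is to establish the three claims simultaneously by induction on $t \geq t_0$, coupling $(r_t, s_t)$ to the auxiliary $\rho_t$ via a scalar comparison principle. For the base case $t = t_0$, the initialization $r_{t_0} = \rho_{t_0}$ makes (1) and (2) trivial, while (3) reduces to a direct check that $\rho_{t_0} \in (-\tfrac{\mu}{2}, 0)$ sits in the claimed interval; this uses the explicit formulas for $\rho_{\pm}$ from Lemma~\ref{lemma: auxiliary} together with the upper bound $\eta\mu < \tfrac{3\sqrt{2}-2}{2}$.

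The engine of the inductive step is the algebraic identity
\[
g(r, s) - g(r, 0) \;=\; s\,r\,\bigl(2 - (1+x)\eta r\bigr),
\]
whose right-hand side has sign \emph{opposite} to that of $r$ whenever $s \geq 0$, provided $|r|$ is small enough that $2 - (1+x)\eta r$ stays positive. On the range we target this is automatic, since $|x| < 1/(\eta\mu) < 1$ and $|r|$ remains $O(1/\eta)$. In words, turning on a nonnegative $s$ always pulls $r_{t+1}$ toward zero compared with $\rho_{t+1}$.

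For the inductive step proper, assume $r_t$ and $\rho_t$ share a sign with $|r_t| \leq |\rho_t|$ and both lie in the interval $[-\tfrac{2\eta\mu-1}{3\eta},\tfrac{1}{\eta}]$ where, by Lemma~\ref{lemma: auxiliary}, the map $g(\cdot, 0)$ is monotonically decreasing and sign-reversing. Monotonicity immediately yields $|g(r_t, 0)| \leq |g(\rho_t, 0)| = |\rho_{t+1}|$ with matching (flipped) signs, and the identity above upgrades this to $|r_{t+1}| = |g(r_t, s_t)| \leq |g(r_t, 0)| \leq |\rho_{t+1}|$. The standing hypothesis $r_t r_{t+1} < 0$ then pins the sign of $r_{t+1}$ to agree with that of $\rho_{t+1}$, delivering (1) and (2) at time $t+1$.

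The main obstacle is claim (3): ensuring neither iterate ever escapes the monotonicity window, so the comparison argument can be applied at every step. I would handle this by a cobweb analysis of the scalar map $\rho \mapsto g(\rho, 0)$ around its $2$-periodic orbit $\{\rho_-, \rho_+\}$. The threshold $\eta\mu < \tfrac{3\sqrt{2}-2}{2}$ is precisely what keeps the period-doubling bifurcation at bay: one checks that $\lvert g'(\rho_-, 0)\,g'(\rho_+, 0)\rvert < 1$ throughout this range, so the two-step map $g \circ g$ is locally contracting toward both periodic points and traps the $\rho$-trajectory in a neighborhood of $\{\rho_-, \rho_+\}$ that lies inside the monotonicity window. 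A short case distinction on whether $\rho_{t_0}$ lies to the left or right of $\rho_-$ shows that the trajectory enters this trap within one or two steps, and the magnitude bound $|r_t| \leq |\rho_t|$ established above then transfers the containment from $\rho_t$ to $r_t$, closing the induction.
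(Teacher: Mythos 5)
Your inductive comparison of $(r_t,s_t)$ against $\rho_t$ is the same engine the paper uses: decompose $g(r,s)=g(r,0)+sr\bigl(2-(1+x)\eta r\bigr)$, use monotonicity of $g(\cdot,0)$ on the window $[-\tfrac{2\eta\mu-1}{3\eta},\tfrac{1}{\eta}]$ to compare $g(r_t,0)$ with $g(\rho_t,0)$, and use the oscillation hypothesis to pin signs. One small slip there: for $s\ge 0$ the correction $sr\bigl(2-(1+x)\eta r\bigr)$ has the \emph{same} sign as $r$, not the opposite sign; it is precisely because $g(r,0)$ has the sign opposite to $r$ that adding a term of $r$'s sign shrinks $|g(r,0)|$. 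Your stated sign would push the iterate \emph{away} from zero, contradicting your own conclusion, so the prose needs fixing even though the intended inequality $|g(r_t,s_t)|\le|g(r_t,0)|$ is correct.

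The genuine gap is in claim (3), which you rightly identify as the crux but then handle with the wrong mechanism. Local contraction of $g\circ g$ at the $2$-periodic points, i.e. $\lvert g'(\rho_-,0)\,g'(\rho_+,0)\rvert<1$, only controls orbits already in a small neighborhood of $\{\rho_-,\rho_+\}$, and your claim that the trajectory ``enters this trap within one or two steps'' is false: the $\rho$-orbit approaches the $2$-cycle from \emph{inside} the interval with an expanding envelope (this is exactly the behavior of Lemma~\ref{lemma: toy-negative}), so if $\rho_{t_0}$ is close to $0$ the transient lasts arbitrarily many steps, during which your argument provides no containment and hence no license to apply the monotonicity comparison. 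What is actually needed — and what the paper proves — is exact forward-invariance of $[\rho_-,\rho_+]$: on this interval $g(\cdot,0)$ is monotone decreasing and swaps the endpoints ($g(\rho_-,0)=\rho_+$, $g(\rho_+,0)=\rho_-$), so $g(\cdot,0)$ maps $[\rho_-,0]$ into $[0,\rho_+]$ and vice versa, and the only analytic check required is that $[\rho_-,\rho_+]$ sits inside the monotonicity window, which is where the bound $\eta\mu<\tfrac{3\sqrt{2}-2}{2}$ enters. No derivative condition at the periodic points is needed, and none would suffice for the global containment. Replace your cobweb/contraction paragraph with this invariance argument and the induction closes.
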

\begin{proof}
    Let $\eta\mu \in \big(1, \frac{3\sqrt{2} - 2}{2}\big)$.
    Our first goal is to show that with initialization $\rho_{t_0} \in [\rho_{-}, \rho_{+}]$, where $\rho_{\pm}$ are the $2$-periodic points, then for any $t$, $\rho_t\rho_{t-1} < 0$ and $r_t \in [\rho_{-}, \rho_{+}]$ hold. We prove this by induction. Now suppose this holds for $t$. It is easy to check when $\eta\mu \in \big(1, \frac{3\sqrt{2} - 2}{2}\big)$, the following inequalities are true: 
    \begin{align*}
        -\frac{2\eta\mu-1}{3\eta} \leq  \frac{1-\mu\eta-\sqrt{\mu^2\eta^2+2\mu\eta-3}}{2\eta} < 0 < \frac{1-\mu\eta-\sqrt{\mu^2\eta^2+2\mu\eta-3}}{2\eta} < \frac{1}{\eta}.
    \end{align*}
    Therefore, the mapping $g(\rho, 0)$ is monotonically decreasing and sign-changing from (1) of Lemma~\ref{lemma: auxiliary}. As a result, when $\rho_t < 0$, we have
    \begin{align*}
        0 \leq \rho_{t+1} = g(\rho_t, 0) < g(\rho_{-}, 0) = \rho_{+}.
    \end{align*}
    Similarly, when $\rho_t > 0$, we have
    \begin{align*}
        0 \geq \rho_{t+1} = g(\rho_t, 0) \geq g(\rho_{+}, 0) = \rho_{-}.
    \end{align*}
    Therefore (1) and $\rho_t$ part in (3) are proved. It remains to prove for (2) which immediately implies $r_t \in [\rho_{-}, \rho_{+}]$ in (3). Still, we prove this by induction on $t$. Suppose that $|r_t| \leq |\rho_t|$ is true for $t$. When $r_t < 0$,
    \begin{align*}
        r_{t+1} = g(r_t, s_t) = g(r_t, 0) + r_ts_t \cdot \big(2  - (1+x^2)\eta r_t\big) \leq g(r_t, 0) \leq g(\rho_t, 0),
    \end{align*}
    where the last inequality comes from the monotonicity of $g(\cdot, 0)$ and condition $r_t \in [\rho_{-}, \rho_{+}]$. When $r_t > 0$, it holds that
    \begin{align*}
         r_{t+1} = & g(r_t, 0) + 2r_ts_t - (1+x^2)\eta r^2_ts_t \\
         \geq & g(r_t, 0) + 2r_ts_t - (1+x^2) r_ts_t \geq g(r_t, 0) \geq g(\rho_t, 0),
    \end{align*}
    where the first inequality comes from $r_t \leq \rho_{-} \leq \tfrac{1}{\eta}$ and the second is from $x \leq 1$.
\end{proof}

Finally, we give the condition on when the bivariate mapping $(g, h)$ admits periodic points.

\begin{lemma} [$2$-periodic points]\label{lemma: fixed-orbit}
    If $x \in (0, \tfrac{1}{\eta\mu})$, then the mapping $(g, h): \sR^2 \to \sR^2$ does not admit $2$-periodic points, and hence also $2^n$-periodic points for $n \geq 1$, in the domain $(\sR\setminus\{0\}) \times (0, \mu\eta - 1)$.  On the contrary, if $x \in (\tfrac{1}{\eta\mu}, 1)$, $(g,h)$ admits a pair of non-trivial $2$-periodic points $(r_{\pm},s_{\pm}) $in the same domain as
    \begin{align*}
        r_{\pm} &= \frac{1 -\mu  \eta x \mp \sqrt{\mu ^2 \eta^2 x^2+2 \mu  \eta x-3}}{2 \eta x}, \\ 
        s_{\pm} &= \frac{(1-x) (\mu  \eta x+1 \pm  \sqrt{(\mu  \eta x+1)^2-4})}{2 x}.
    \end{align*}
\end{lemma}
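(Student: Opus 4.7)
The plan is to reduce the two-dimensional $2$-periodicity of $(g,h)$ to a polynomial equation in a single symmetric variable via Vieta-like relations. A $2$-periodic orbit $(r_0,s_0)\to(r_1,s_1)\to(r_0,s_0)$ with $s_0,s_1>0$ must satisfy $s_1=s_0(1-x\eta r_0)^2$ and $s_0=s_1(1-x\eta r_1)^2$; multiplying gives $[(1-x\eta r_0)(1-x\eta r_1)]^2=1$, which splits into Case~(I) $(1-x\eta r_0)(1-x\eta r_1)=+1$ and Case~(II) $(1-x\eta r_0)(1-x\eta r_1)=-1$. The trivial fixed point $r_0=r_1=0$ is excluded by the hypothesis $r\in\sR\setminus\{0\}$.

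In Case~(I), expansion gives the Vieta-like identity $r_0+r_1=x\eta r_0 r_1$, so with $p:=r_0+r_1$ and $q:=r_0 r_1$ we have $q=p/(x\eta)$, and $r_0,r_1$ are the roots of $t^2-pt+p/(x\eta)=0$. To determine $p$, I would use the two $g$-equations $r_1=g_0(r_0)+s_0 f(r_0)$ and $r_0=g_0(r_1)+s_0(1-x\eta r_0)^2 f(r_1)$, eliminate $s_0$ by taking their ratio to obtain
\[
    f(r_0)\bigl(r_0-g_0(r_1)\bigr)=(1-x\eta r_0)^2 f(r_1)\bigl(r_1-g_0(r_0)\bigr).
\]
Once the relation $(1-x\eta r_0)(1-x\eta r_1)=1$ is substituted, this identity becomes symmetric in $r_0,r_1$ and can be rewritten purely in terms of $p,q$; imposing $q=p/(x\eta)$ yields a polynomial equation $\Phi(p)=0$ whose non-trivial root is $p^{\ast}=(1-\mu\eta x)/(\eta x)$. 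Computing the discriminant of $t^2-p^{\ast}t+p^{\ast}/(x\eta)=0$ then reproduces the stated $r_{\pm}$, with real roots existing iff $\mu^2\eta^2 x^2+2\mu\eta x-3\ge 0$, i.e., $\mu\eta x\ge 1$, which fails precisely when $x\in(0,1/(\mu\eta))$. Back-substitution into $s_0=(r_1-g_0(r_0))/f(r_0)$ and simplification via Vieta give $s_{\pm}=\tfrac{1-x}{x}(1-x\eta r_{\mp})$; expanding $1-x\eta r_{\mp}=(1+\mu\eta x\pm\sqrt{(\mu\eta x+1)^2-4})/2$ yields the claimed formula, and positivity on $x\in(1/(\mu\eta),1)$ follows from $1-x>0$ and $1+\mu\eta x>\sqrt{(\mu\eta x+1)^2-4}$.

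Case~(II) is handled analogously: the constraint $x^2\eta^2 r_0 r_1-x\eta(r_0+r_1)+2=0$ combined with the $g$-equations leads to a polynomial system whose solutions, for $x\in(0,1/(\mu\eta))$, either lack real roots, collapse to fixed points, or produce $s$-values outside $(0,\mu\eta-1)$, and so contribute no admissible orbits. For higher $2^n$-periodic orbits with $n\ge 2$, I would exploit that $s_t$ evolves monotonically whenever $r_t$ has a fixed sign, so the asymptotic dynamics effectively reduce to a one-dimensional cubic-like map in $r$; a Sharkovskii-type ordering (or the standard period-doubling cascade viewpoint) then forces any $2^n$-orbit to coexist with a $2$-orbit, so the absence established above excludes them. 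The main obstacle is the algebraic reduction in Case~(I): because $g_0$ is cubic, $f$ quadratic, and $(1-x\eta r)^2$ contributes an extra quadratic weight, a naive expansion produces a polynomial in $p$ of high degree, and the crucial step is verifying that after the substitutions $q=p/(x\eta)$ and $(1-x\eta r_0)(1-x\eta r_1)=1$ enough terms cancel so that $\Phi(p)$ factors and isolates the clean root $p^{\ast}=(1-\mu\eta x)/(\eta x)$; conceptually this mirrors Lemma~\ref{lemma: auxiliary} under the rescaling $\eta\mapsto\eta x$, but making the algebra rigorous requires careful bookkeeping.
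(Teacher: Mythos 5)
Your reduction via the multiplicative structure of $h$ is a genuinely different and more transparent route than the paper's, which simply feeds the full system $g(g(r,s),h(r,s))=r$, $h(g(r,s),h(r,s))=s$ to Mathematica and then discards inadmissible root branches one by one by sign and magnitude estimates on $s$. Your observation that any $2$-orbit with $s>0$ forces $[(1-x\eta r_0)(1-x\eta r_1)]^2=1$, together with the Vieta substitution $q=p/(x\eta)$ in Case (I), is sound and does recover the correct branch: the sum $p^*=(1-\mu\eta x)/(\eta x)$ and product $(1-\mu\eta x)/(\eta^2x^2)$ match the stated $r_{\pm}$, and the discriminant $(\mu\eta x-1)(\mu\eta x+3)/(\eta^2x^2)$ gives exactly the reality threshold $\mu\eta x\gtrless 1$. (Incidentally, your pairing $s_{\pm}=\tfrac{1-x}{x}(1-x\eta r_{\mp})$ is the dynamically consistent one, since $(1-x\eta r_+)(1-x\eta r_-)=1$ makes $h(r_+,s)=s(1-x\eta r_+)^2$ map $\tfrac{1-x}{x}(1-x\eta r_-)$ to $\tfrac{1-x}{x}(1-x\eta r_+)$; the lemma's displayed $(r_\pm,s_\pm)$ indexing is the one that needs adjusting.)

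There is, however, a concrete gap in the nonexistence direction, which is the direction the rest of the paper actually relies on. To conclude that \emph{no} $2$-periodic point exists in $(\sR\setminus\{0\})\times(0,\mu\eta-1)$ when $x\in(0,\tfrac{1}{\eta\mu})$, every solution branch must be excluded, and your Case (I) contains more than the single root $p^*$: the paper's branch $(r_{2,5,\pm},s_{2,5,\pm})$ also satisfies $(1-x\eta r_0)(1-x\eta r_1)=1$ (one checks $(x-1-x^2)^2-x^2(x^2-2x+2)=(x-1)^2$), as does the nontrivial fixed point $r_0=r_1=2/(\eta x)$, so $\Phi(p)$ has at least two further real roots beyond $p^*$. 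Your proposal asserts that $\Phi$ "isolates the clean root $p^*$", which is false as stated; the extra Case (I) branch is real for all $x\in(0,1)$ and can only be ruled out by showing its $s$-values leave $(0,\mu\eta-1)$ (the paper does this by proving $s_{2,5,+}+s_{2,5,-}<0$). Case (II), which carries the branches the paper labels $(r_{2,4,\pm})$ and $(r_{2,7,\pm})$, is likewise only asserted to "work out analogously" and needs the analogous explicit bounds ($s_{2,4,+}>2\sqrt2>\mu\eta-1$ and $s_{2,7,-}>(\mu\eta+1)/(1-x)^2>\mu\eta-1$). Finally, your appeal to a Sharkovskii-type ordering to dispose of $2^n$-orbits is heuristic for a two-dimensional map (Sharkovskii's theorem is a statement about continuous interval maps), though the paper's own "and hence" is no more rigorous on this point.
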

\begin{proof}
    Before proceeding to the discussion $2$-periodic points, we compute the fixed point, a.k.a. $1$-periodic points of $(g, h)$. This is because every $1$-periodic point is also a trivial $2$-periodic point which we need to eliminate. To obtain the fixed points, it requires to solve the following equation system
    \begin{align*}
        \begin{cases}
            g(r, s) = r, \\ h(r, s) = s.
        \end{cases}
    \end{align*}
    Clearly $(r_{1,1}, s_{1,1})$ with $r_{1,1} = 0$ and any $s_{1,1} \in \sR$ is a trivial solution. We employ Mathematica Symbolic Calculation to obtain another root $(r_{1,2}, s_{1,2}) = \left( \tfrac{2}{\eta x}, \tfrac{(1 - x) (2 + x\mu \eta)}{x} \right)$.
    
    Now we compute the non-trivial $2$-periodic points by solving the following equation system
    \begin{align*}
        \begin{cases}
            g(g(r, s), h(r,s)) = r, \\ h(g(r, s), h(r,s))  = s.
        \end{cases}
    \end{align*}
    The computation result is also obtained by Mathematica Symbolic Calculation. We compare with the above fixed point, which indicates the first three real roots 
    \begin{align*}
        (r_{2,1}, b_{2,1}) = \left(0, \mu\eta\right), \quad (r_{2,2}, b_{2,2}) = (0, \mu\eta-1), \quad \text{and} \quad (r_{2,3}, b_{2,3}) = \left( \tfrac{2}{\eta x}, \tfrac{(1 - x) (2 + x\mu \eta)}{x} \right)
    \end{align*}
    are also the $1$-periodic points and hence trivial. There are four pairs of possibly real roots $(r_{2,i,\pm}, s_{2,i,\pm})$ for $i = \{4,5,6,7\}$ as following:
    \begin{align*}
        r_{2,4,\pm} &= \frac{x+1 \mp \sqrt{ x^2+1}}{\eta x}, \\
        s_{2,4,\pm} &=  (\mu\eta x+x^2+x+2) \left(\frac{1}{x} \pm \frac{1}{\sqrt{1+x^2}}\right), \\  
        r_{2,5,\pm} &= \frac{x \mp \sqrt{ x^2-2  x+2 }}{\eta x-\eta}, \\  
        w_{2,5,\pm} &=  \frac{(\mu  \eta (x-1)+x^2-x+2) (x^2-x+1 \pm x \eta \sqrt{ x^2-2 x+2})}{(x-1)^3}, \\
        r_{2,6,\pm} &= \frac{x -\mu  \eta x^2 \mp x \sqrt{\mu ^2 \eta^2 x^2+2 \mu  \eta x-3}}{2 \eta x^2}, \\ 
        s_{2,6,\pm} &= \frac{(1-x) (\mu  \eta x+1 \pm  \sqrt{\mu ^2 \eta^2 x^2+2 \mu  \eta x-3})}{2 x},
    \end{align*}
    and 
    \begin{align*}
        r_{2,7,\pm} & = \frac{ 1\pm\sqrt{ 2 x^2-2 x+1}}{\eta (1-x) x}, \\
        s_{2,7,\pm} & = \frac{\left(1+2 
   x-2  x^2 \mp x \sqrt{1+2 x-2x^2}\right) \left(x(1-x)(\mu\eta-1)+2\right)}{
   (1-x)^3 x}.
    \end{align*}
    It is easy to verify that for any $i \in \{4,5,6,7\}$, it holds that
    \begin{align*}
        g(r_{2,i,\pm},s_{2,i,\pm}) = r_{2,i,\mp}, \qquad  h(r_{2,i,\pm},s_{2,i,\pm}) = s_{2,i,\mp},
    \end{align*}
    which suggests these pairs are indeed $2$-periodic points if they are real number given condition $s \in (0, \mu\eta - 1)$.

    Now we discuss if the points are legitimate by considering the constraint $s \in (0, \mu\eta - 1)$. We first notice that
    \begin{align*}
        s_{2,5,+} + s_{2,5,-} & = -\frac{ (x^2-x+1) \left(\mu  \eta (x-1)+x^2-x+2\right)}{(1-x)^3} \\
        & = -\frac{(x^2 - x + 1)(2 - (\mu\eta-x)(1-x))}{(1-x)^3} \\
        & \leq - \frac{\left((x-\tfrac{1}{2})^2 + \tfrac{3}{4}\right) \big( 2 - (2 - x)(1-x)\big)}{(1-x)^3} \\
        & = - \frac{\left((x-\tfrac{1}{2})^2 + \tfrac{3}{4}\right) \big( 3x - x^2\big)}{(1-x)^3} < 0
    \end{align*}
    due to the fact that $x \in (0, 1)$ and $\eta\mu \in [0, 2]$. This indicates at least one of $s_{2,5,\pm}$ must be negative and this pair should also be discarded. Also, we discard the pair $(r_{2,4,\pm},s_{2,4,\pm})$ and $(r_{2,7,\pm},s_{2,7,\pm})$ because
    \begin{align*}
        s_{2,4,+} > \frac{2}{x} + x \geq 2\sqrt{2} > \mu\eta -1,
    \end{align*}
    and 
    \begin{align*}
        s_{2,7,-} > \frac{x(1-x)(\mu \eta + 1)}{x(1-x)^3} = \frac{\eta\mu+1}{(1-x)^2} > \eta \mu -1.
    \end{align*}
    It remains to investigate the last pair of $(r_{2,6,\pm}, s_{2,6,\pm})$. We notice the following identity \begin{align*}
        \mu^2 \eta^2 x^2 + 2\mu\eta x - 3 = (\mu \eta x + 1)^2 - 4.
    \end{align*}
    As a result, if $x \in (0, 1/(\mu\eta))$, it holds that $(\mu \eta x + 1)^2 - 4 < 0$ and hence the roots are complex. On the contrary, if $x \in (1/(\mu\eta), 1)$, it holds that $(\mu \eta x + 1)^2 - 4 > 0$ and hence it admits real roots. Therefore we conclude that when $x \in (0, 1/(\mu\eta))$, there is no $2$-periodic points for the mapping $(g,h)$; in the meanwhile, a pair of $2$-periodic points exists when $x \in (1/(\mu\eta), 1)$.

    We can verify there are four additional roots that have $\sqrt{-7 + 2 \mu \eta x + \mu^2 \eta^2 x^2}$ in the fractional and hence are always complex because $-7 + 2 \mu \eta x + \mu^2 \eta^2 x^2 < 0$ for any $\mu\eta\in[0,2]$ and $x \in [0,1]$. Therefore they do not constitute $2$-periodic points in the real domain.
\end{proof}

\paragraph{Phase transition and thresholding-crossing.}
With the above lemmas featuring the $(r_t, s_t)$ iterations, we now prepare to prove the phase transition from $\alpha_t < 0$ to $\alpha_t > 0$ will eventually happen, turning the envelope of $r_t$ to convergent. It should be noticed that $\alpha \lessgtr 0$ is equivalent to $s_t \lessgtr \mu\eta - 1$, which we use interchangeably.
\begin{lemma} \label{lemma: phase-transition}
    \correction{Suppose that $\eta\mu \geq (1, \tfrac{3\sqrt{2} - 2}{2}]$ and $x \in (0, \tfrac{1}{\mu\eta})$. Consider $r_{t_0} \in (-\tfrac{\mu}{2}, 0)$ and $0 < s_{t_0} \ll \mu\eta -1$. Then it holds that %\correction{(a) $r_tr_{t+1} < 0$ holds for any $t \in [t_0, \mathfrak{t}]$,} and (b) 
    $\lim_{t\to\infty} s_t > \mu\eta - 1$, or equivalently $\lim_{t\to\infty}\alpha_t > 0$. }
\end{lemma}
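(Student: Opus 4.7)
The plan is to argue by contradiction: assume $s_t \leq \mu\eta - 1$ (equivalently $\alpha_t \leq 0$) for every $t \geq t_0$, and show that this hypothesis actually forces $s_t$ to grow geometrically. Under the standing sign-change assumption, Lemma~\ref{lemma: r-s-bound} applies, so $r_t$ has the same sign as the auxiliary iterate $\rho_t$, $|r_t|\leq|\rho_t|$, and both lie in $[\rho_-,\rho_+]$. The parameter range $\mu\eta \in (1, (3\sqrt{2}-2)/2]$ sits between the period-doubling of the trivial fixed point (at $\mu\eta=1$) and the next bifurcation of the cycle $\{\rho_-,\rho_+\}$, so this $2$-cycle of $g(\cdot,0)$ is locally attracting, and standard one-dimensional theory gives $\rho_t \to \{\rho_-,\rho_+\}$ from $\rho_{t_0}\in(-\mu/2,0)$. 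I would then extend this to the full iterate: since $g(r_t, s_t) - g(r_t, 0) = r_t s_t \, (2 - (1+x)\eta r_t) = O(s_t) = O(\mu\eta - 1)$, the strict contraction of $g(\cdot,0)^2$ near the cycle survives this small perturbation, and $r_t$ also settles into an $O(\mu\eta-1)$ neighborhood of $\{\rho_-,\rho_+\}$.

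The core computation is the per-cycle growth rate of $s_t$. Vieta's formulas applied to the quadratic defining $\rho_\pm$ give $\rho_+ + \rho_- = (1-\mu\eta)/\eta$ and $\rho_+\rho_- = (1-\mu\eta)/\eta^2$, so
\begin{equation*}
    (1 - x\eta \rho_+)(1 - x\eta \rho_-) = 1 - x\eta(\rho_+ + \rho_-) + x^2\eta^2\rho_+\rho_- = 1 + (\mu\eta - 1)\, x(1-x).
\end{equation*}
For $x \in (0, 1/(\mu\eta)) \subset (0,1)$ and $\mu\eta>1$ this is strictly greater than $1$. Since $s_{t+2}/s_t = (1 - x\eta r_t)^2(1 - x\eta r_{t+1})^2$ and $r_t, r_{t+1}$ approach $\rho_\pm$ in alternating order, the two-step ratio is bounded below by some $\theta>1$ for all sufficiently large $t$. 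Starting from $s_{t_0}>0$, this forces $s_t$ to grow geometrically, contradicting the standing bound $s_t \leq \mu\eta - 1$ and yielding the claim. Since once $s_t$ crosses the threshold the contraction mechanism of Lemma~\ref{lemma: two-iter-lower-bound} kicks in and stops further qualitative changes, the limit $\lim_{t\to\infty}s_t$ exists and exceeds $\mu\eta - 1$.

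The main technical obstacle is the perturbation step: rigorously verifying that the $s_t$-dependent correction in $g(r_t, s_t)$ does not destroy the convergence of $r_t$ to the auxiliary $2$-cycle, uniformly over the entire time window in which $s_t$ remains below the threshold. The upper bound $\mu\eta \leq (3\sqrt{2}-2)/2$ is essential because it keeps $|g'(\rho_-,0)\,g'(\rho_+,0)|$ strictly below $1$, providing a genuine local contraction of $g(\cdot,0)^2$ that tolerates $O(\mu\eta-1)$ perturbations; beyond this threshold the $2$-cycle itself becomes unstable and the argument breaks down. A secondary subtlety is the transient phase before $r_t$ enters the neighborhood of the $2$-cycle, but this contributes only an $O(1)$ perturbation to $\log s_t$, which is dominated by the linear-in-$t$ post-transient growth since $s_{t_0}\ll \mu\eta - 1$ leaves ample margin for the crossing to occur.
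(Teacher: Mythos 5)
Your route is genuinely different from the paper's. The paper proves this lemma qualitatively: it shows the orbit stays bounded in $[\rho_-,\rho_+]$ with alternating signs (via Lemma~\ref{lemma: r-s-bound}), invokes Lemma~\ref{lemma: fixed-orbit} to rule out any $2^n$-periodic orbit of $(g,h)$ in the sub-threshold region $s\in(0,\mu\eta-1)$ when $x\in(0,\tfrac{1}{\mu\eta})$, and then argues that a bounded, non-divergent, non-chaotic orbit with no invariant set to converge to must exit the region (dismissing convergence to the degenerate fixed line $r=0$ as measure-zero). You instead attempt a quantitative contradiction: your Vieta computation $(1-x\eta\rho_+)(1-x\eta\rho_-)=1+(\mu\eta-1)x(1-x)>1$ is correct and is a genuinely nice identity the paper never exploits — it explains \emph{why} $s_t$ grows and at what rate, and it makes transparent where the hypothesis $x\in(0,1)$ enters.

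However, the step from that identity to a uniform two-step growth factor $\theta>1$ has a real gap, and I believe it fails rather than merely needing more care. Lemma~\ref{lemma: r-s-bound} gives only $|r_t|\le|\rho_t|$, i.e.\ the true orbit lies \emph{inside} the unperturbed cycle, and the perturbation $g(r,s)-g(r,0)=rs\,(2-(1+x)\eta r)$ pushes it further inward by an amount proportional to $s_t$. If one evaluates your product at the instantaneous cycle of $r\mapsto g(r,s_t)$ (which is what the orbit actually tracks, to the extent it tracks anything), the effective gap $\mu\eta-1$ is replaced by $\mu\eta-1-s_t$, giving a growth factor $\approx 1+(\mu\eta-1-s_t)\,x(1-x)$ that degenerates to $1$ exactly as $s_t$ approaches the threshold. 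The resulting recursion is logistic-like, $s_{t+2}\approx s_t\bigl(1+2x(1-x)(\mu\eta-1-s_t)\bigr)$, whose solution increases to $\mu\eta-1$ \emph{without crossing it} — so the contradiction hypothesis $s_t\le\mu\eta-1$ is not refuted by geometric growth; the crossing, if it happens, is a higher-order overshoot effect. Two further problems compound this: the multiplier of the $2$-cycle is $1-\Theta(\mu\eta-1)$, so the tracking precision achievable against the $O(s_t|r_t|)$ perturbation is far coarser than the $o(\mu\eta-1)$ accuracy needed to preserve your margin in the product; and the degenerate scenario $(r_t,s_t)\to(0,\mu\eta-1)$ (a genuine fixed point of $(g,h)$) is never excluded by your argument, whereas the paper at least names it and discards it. To salvage your approach you would need either a lower bound on $|r_t|$ in terms of $\mu\eta-1-s_t$ (a ``cycle-tracking from outside'' estimate) plus a summability argument showing $\sum_t(\mu\eta-1-s_t)=\infty$ forces crossing, or you should fall back on the paper's topological argument via Lemma~\ref{lemma: fixed-orbit}.
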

\begin{proof}
    We consider the behavior of the discrete system of $(r_t, s_t)$ described in (\ref{eq: r-s-system}). Asymptotically, it either diverges, becomes chaotic, or converges to a fixed point or a periodic cycle. Let us suppose that ${\lim\sup}_{t\to\infty}  \alpha_t < 0$, which is equivalent to saying ${\lim\sup}_{t\to\infty} s_t < \mu\eta - 1$. We will prove later that under parameter choice $x \in (0, \tfrac{1}{\eta\mu})$, the iterates $(r_t, s_t)$ do not diverge, become chaotic, nor converge to a periodic orbit. As a result, either $\alpha_t$ will finally come across \textbf{zero}, or ${\lim\sup}_{t\to\infty} \alpha_t > 0$ still holds but $\lim_{t\to\infty} r_t = 0$. Since the total measure of the latter event is negligible, we only take the thresholding-crossing case into account.

    Now we analyze the behavior of $(r_t, s_t)$ when the crossing of $\alpha_t$ does not occur. We consider the auxiliary iteration $\rho_{t+1} = g(\rho_t, 0)$ with the same initialization at $r_{t_0}$ and \correction{prove the following statements are true for any $t \geq t_0$:
    \begin{align*}
    \text{(1)} \qquad
        r_tr_{t+1} < 0, \qquad \text{and} \qquad \text{(2)} \qquad |r_t| \in |\rho_t|.
    \end{align*}
    This is proved by induction over $t \geq t_0$. For the base case $t = t_0$, it can be easily verified from 
    \begin{align*}
        r_0 \geq \frac{\mu}{2} > \frac{1-\mu\eta-\sqrt{\eta^2\mu^2+2\eta\mu-3} }{2\eta} \geq - \frac{\mu\eta+1}{2\eta}.
    \end{align*}  
    Therefore, the assumption $r_{t_0} \in (-\mu/2, 0)$ in Lemma~\ref{lemma: r-s-bound} is met and allows us to use the lemma once change of sign is proved. Now suppose they hold for any $t-1$. It is immediately from the update of $r_t$ 
    \begin{align*}
        - \frac{r_{t+1}}{r_t} = 1 - \alpha_t + \beta_t r_t.
    \end{align*}
    To establish $(1)$, it suffices to show $1 - \alpha_t + \beta_t r_t \geq 1 + \beta_r r_t > 0$ due to condition $\alpha_t < 0$. We discuss two separate cases: $r_t \leq 0$ and $r_t > 0$. In the first case of $r_t \leq 0$, we write down
    \begin{align*}
        1 + \beta_t r_t & = 1 + 2\eta r_t - \eta^2r_t \cdot \big( \mu + r_t - c'_x b_t \big) \\
        & = 1 + 2\eta r_t -  \eta^2r_t \cdot (1+x^2) \cdot \big( a_t + x^3 \cdot b_t \big) \\
        & = 1 + 2\eta r_t.
    \end{align*}
    where in the first line we use the definition of $\beta_t$, in the second line we use the identity in Lemma~\ref{lemma: main-dynamics}, and in the last line we use facts $a_t, b_t \geq 0$. By our assumption, $r_t r_{t-1} < 0$ is true and hence we can invoke Lemma~\ref{lemma: r-s-bound} to conclude that $r_t \geq \rho_- $. This leads to
    \begin{align*}
        - \frac{r_{t+1}}{r_t} \geq 1 + 2\eta r_t \geq 1 + 2\eta \rho_- \geq 1-2\eta \cdot \frac{2\eta\mu - 1}{3\eta} = \frac{5}{3} - \frac{4}{3} \mu\eta > 0.
    \end{align*}
    because $\eta\mu \leq \frac{3\sqrt{2}-2}{2} < 5/4$. In the second case of $r_t \geq 0$, we have
    \begin{align*}
         1 + \beta_t r_t & = 1 + 2\eta r_t - \eta^2\mu r_t  - \eta^2 r_t^2  + \eta^2 c'_x b_tr_t \\
         & \geq 1 + \eta r_t \cdot \big( 2 - \eta\mu - \eta r_t \big).
    \end{align*}
    Similar to the above discussion, since $r_tr_{t-1} < 0$, we are able to obtain $r_t \leq \rho_+$ using Lemma~\ref{lemma: r-s-bound}. As a result, we have
    \begin{align*}
         - \frac{r_{t+1}}{r_t} \geq 1 + \eta r_t \big( 2 - \eta\mu - \eta/\eta \big) = 1 - \eta r_t \big(  \eta\mu - 1 ) \geq  2 - \eta\mu  > 0,
    \end{align*}
    where the second line comes from $r_t > 0$ and $b_t > 0$.
    Summarizing both cases, we reach the conclusion $r_tr_{t+1} < 0$. Since the condition is met, we can invoke Lemma~\ref{lemma: r-s-bound} to conclude that $r_{t+1} \in [\rho_-, \rho^+]$ is also true.
    }
    With the above statements to be true, the $r_t$ iteration is bounded and not diverging. Also, because $x \in (0, \tfrac{1}{\eta\mu})$, Lemma~\ref{lemma: fixed-orbit} indicates that $(r_t, s_t)$ does not converge to any $2^n$-periodic orbit or become chaotic by bifurcation theory. This proves the existence of $\mathfrak{t}$ and the final result.
\end{proof}

The above lemma suggests that there exists a certain $\mathfrak{t}$ such that $\alpha_{t} < 0$ should hold for any $t \geq \mathfrak{t}$, which marks the beginning of the \emph{convergence} phase. We next characterize the properties and, in particular, give an upper bound estimate for $\alpha_t$ when the transition happens through the following lemma for the initial gap. These results are important when we establish the convergence rate in this phase.

\begin{lemma} \label{lemma: crossing}
    Suppose that $\eta\mu \geq 1$ and $x \in (0, \tfrac{1}{\mu\eta})$. Then there exists a $\mathfrak{t}$ such that $ 0 \leq \alpha_t $ is true for any $t \geq \mathfrak{t}$. Moreover, it holds that $r_{\mathfrak{t}} > 0$ and $\alpha_{\mathfrak{t}} \leq \Theta(\mu\eta - 1)$.
\end{lemma}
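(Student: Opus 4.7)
The plan is to take $\mathfrak{t}$ to be the first index for which $s_t \geq \mu\eta - 1$. A finite such $\mathfrak{t}$ exists because Lemma~\ref{lemma: phase-transition} guarantees $\lim_{t\to\infty} s_t > \mu\eta - 1$ while Lemma~\ref{lemma: initial-phase} shows that $s_{t_0}$ is of order $\alpha^{\Theta(1)}$, far below $\mu\eta - 1$. With the approximation $a'_t, b'_t \equiv 0$ in effect after the initial phase we may write $\alpha_t = 2\bigl(s_t - (\mu\eta - 1)\bigr)$, so $\alpha_{\mathfrak{t}} \geq 0$ is automatic at the crossing.

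The sign $r_{\mathfrak{t}} > 0$ follows from the multiplicative update $s_{t+1} = (1 - x\eta r_t)^2 s_t$ of Lemma~\ref{lemma: main-dynamics}. Since $|r_t| \leq \rho_+ = O(\sqrt{\mu\eta-1}/\eta)$ by the envelope bound in Lemma~\ref{lemma: r-s-bound}, we have $x\eta |r_t| \ll 2$, so the multiplier $(1 - x\eta r_t)^2$ strictly exceeds $1$ if and only if $r_t < 0$. If $r_{\mathfrak{t}-1} \geq 0$ we would get $s_{\mathfrak{t}} \leq s_{\mathfrak{t}-1} < \mu\eta - 1$, contradicting minimality of $\mathfrak{t}$. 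Hence $r_{\mathfrak{t}-1} < 0$, and the change-of-sign assumption $r_{\mathfrak{t}-1} r_{\mathfrak{t}} < 0$ forces $r_{\mathfrak{t}} > 0$.

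The bound $\alpha_{\mathfrak{t}} \leq \Theta(\mu\eta - 1)$ comes from using minimality in the other direction: $s_{\mathfrak{t}-1} < \mu\eta - 1$ together with the factorization $\mu^2\eta^2 + 2\mu\eta - 3 = (\mu\eta - 1)(\mu\eta + 3) \leq 5(\mu\eta - 1)$ (valid throughout the relevant step-size range) gives
\[
 s_{\mathfrak{t}} \;\leq\; (1 - x\eta r_{\mathfrak{t}-1})^2 (\mu\eta - 1) \;\leq\; \bigl(1 + O(\sqrt{\mu\eta - 1})\bigr)(\mu\eta - 1),
\]
so $\alpha_{\mathfrak{t}} = 2(s_{\mathfrak{t}} - (\mu\eta - 1)) = O((\mu\eta - 1)^{3/2})$, which is within $\Theta(\mu\eta - 1)$.

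The main obstacle is persistence, i.e.\ guaranteeing $\alpha_t \geq 0$ for \emph{every} $t \geq \mathfrak{t}$ and not only at the crossing. I would work in two-step blocks: $s_{t+2}/s_t = \bigl((1 - x\eta r_t)(1 - x\eta r_{t+1})\bigr)^2$, then insert Eq.~(\ref{eq: r-in-alpha-beta}) in the form $r_t + r_{t+1} = \alpha_t r_t - \beta_t r_t^2$, so the logarithmic two-step increment is $-2x\eta(\alpha_t r_t - \beta_t r_t^2) + O(x^2\eta^2 r_t r_{t+1})$. Once $\alpha_t > 0$, Lemma~\ref{lemma: two-iter-lower-bound} yields $|r_{t+2}| \leq (1 - \Theta(\mu\eta - 1))|r_t|$, so both tails $\sum r_k^2$ and $\sum \alpha_k r_k$ form geometric series controlled by the initial data. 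The delicate issue is that these geometric tails, when summed, must not exceed the margin $s_{\mathfrak{t}} - (\mu\eta - 1)$; I expect this forces the $\mathfrak{t}$ in the statement to be taken as a slightly shifted version of the first crossing (for instance, the first $t$ after which $s_t$ is bounded below by a small positive gap), for which the bound on $\alpha_{\mathfrak{t}}$ still holds because the shift is controlled by Lemma~\ref{lemma: r-s-bound}.
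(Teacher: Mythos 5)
Your arguments for $r_{\mathfrak{t}}>0$ and for $\alpha_{\mathfrak{t}}\leq\Theta(\mu\eta-1)$ are fine and essentially mirror the paper's (you even get a sharper $O((\mu\eta-1)^{3/2})$ bound by invoking the envelope $|r_{\mathfrak{t}-1}|\leq\rho_+$ from Lemma~\ref{lemma: r-s-bound}, where the paper is content with the cruder $r_{\mathfrak{t}-1}\geq-\mu$). The genuine gap is the persistence step, which you correctly flag as ``the main obstacle'' but do not close. By defining $\mathfrak{t}$ as the \emph{first} index with $s_t\geq\mu\eta-1$, you take on the burden of proving that $s_t$ never dips back below the threshold afterwards, and your sketch for doing so is circular: the geometric decay $|r_{t+2}|\leq(1-\Theta(\mu\eta-1))|r_t|$ from Lemma~\ref{lemma: two-iter-lower-bound} is only available once $\alpha_t\geq 0$ is already known on the whole block, which is exactly what is being established. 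Worse, the margin $s_{\mathfrak{t}}-(\mu\eta-1)$ at the first crossing can be arbitrarily small, so the requirement that the summed tails stay within it cannot be met uniformly; your own hedge that $\mathfrak{t}$ ``may need to be a slightly shifted version of the first crossing'' is an admission that the construction as stated does not work.

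The paper avoids this entirely by a different definition: it lets $\mathfrak{t}$ be the \emph{least} $t$ such that $\alpha_{t'}\geq 0$ for \emph{all} $t'\geq t$. Existence of such a $t$ is exactly what Lemma~\ref{lemma: phase-transition} delivers via $\lim_{t\to\infty}s_t>\mu\eta-1$, so persistence holds by construction, and minimality still gives $\alpha_{\mathfrak{t}-1}\leq 0$, i.e.\ $s_{\mathfrak{t}-1}\leq\mu\eta-1$, which is the only input your other two arguments actually use. (Note the lemma only asserts \emph{existence} of a suitable $\mathfrak{t}$, so there is no need for it to be the first crossing.) If you replace your definition of $\mathfrak{t}$ with the paper's, the rest of your write-up goes through essentially verbatim: the contradiction for $r_{\mathfrak{t}}>0$ becomes ``if $r_{\mathfrak{t}-1}>0$ then $s_{\mathfrak{t}-1}>s_{\mathfrak{t}}$, so $\alpha_{\mathfrak{t}-1}>0$, contradicting minimality,'' and the upper bound on $\alpha_{\mathfrak{t}}$ uses $s_{\mathfrak{t}-1}\leq\mu\eta-1$ exactly as you wrote it.
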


\begin{proof}
    By Lemma~\ref{lemma: phase-transition}, $\lim_{t\to\infty} \alpha_t > 0$ should hold under $x \in (0, \tfrac{1}{\eta\mu})$, which suggests there exists some $t \in \sR$ such that $\alpha_{t'} > 0$ holds for any $t' \geq t$. Let $\mathfrak{t}$ be the least of such $t$'s. We first prove that $r_{\mathfrak{t}} > 0$. Let us suppose not, hence we have $r_{\mathfrak{t}} < 0$ and $r_{\mathfrak{t}-1} > 0$. This indicates that 
    \begin{align*}
        s_{\mathfrak{t} - 1} = (1 - x\eta r_{\mathfrak{t}-1})^{-2} \cdot s_{\mathfrak{t}} > s_{t}
    \end{align*}
    and therefore $\alpha_{\mathfrak{t}-1}  > \alpha_{\mathfrak{t}} > 0$. Hence $\mathfrak{t}$ is not the least $t$ such that $\alpha_t > 0$ holds for any $t' \geq t$. 
    
    We proceed and upper bound $\alpha_t$. Suppose that $r_{\mathfrak{t}-1} \geq - \mu$ holds and we will postpone its proof. Under this condition, we upper bound $s_{\mathfrak{t}}$ as:
    \begin{align*}
        s_{\mathfrak{t}} = (1 - x\eta r_{\mathfrak{t}-1})^2 \cdot s_{\mathfrak{t} - 1} \leq (\mu\eta - 1) \cdot \left(1 - x\eta r_{\mathfrak{t}-1} \right)^2 \leq (\mu\eta - 1) \cdot (1 + x\eta\mu) \leq 4(\mu\eta - 1),
    \end{align*}
    where the last inequality is due to $x \in (0, \tfrac{1}{\mu\eta})$. Therefore $\alpha_{\mathfrak{t}} = 2s_{\mathfrak{t}} - 2(\mu\eta - 1) \leq  2(\mu\eta - 1)$. Finally, we argue that $r_{\mathfrak{t}-1} \in (-\mu, 0)$. Because $a_{t} > 0$, $b_t > 0$ holds for any $t$, from the expression of $r_t$ we deduce that $r_t = (1+x^2) \cdot (a_t + x\cdot b_t) - \mu > -\mu$.
    %It remains to check for $r_{\mathfrak{t}-1} \geq - \mu$. To this end, we compute
     %\begin{align*}
     %   r_{\mathfrak{t}-2} & = g_{\eta,\mu,x}(r_{\mathfrak{t}-3}, s_{\mathfrak{t}-3}) \\
    %    & = g_{\eta,\mu,x}(r_{\mathfrak{t}-3}, 0) +   \\
    %    & \leq \tfrac{1}{\eta} + r_{\mathfrak{t}-3}s_{\mathfrak{t}-3} (2 - (1+x)\eta r_{\mathfrak{t}-3}) \\
    %    & \leq \tfrac{1}{\eta} 
    %\end{align*}
    %\begin{align*}
    %    r_{\mathfrak{t}-1} & = g_{\eta,\mu,x}(r_{\mathfrak{t}-2}, s_{\mathfrak{t}-2}) \\
    %    & = g_{\eta,\mu,x}(r_{\mathfrak{t}-2}, 0) +   \\
    %    & \geq -\mu + r_{\mathfrak{t}-2}s_{\mathfrak{t}-2} (2 - (1+x)\eta r_{\mathfrak{t}-2}) \\
    %    & \geq -\mu
    %\end{align*}
\end{proof}

\begin{lemma} \label{lemma: s-bound}
    Suppose that $\alpha_t > 0$, $s_t > 0$, $r_t < 0$ and $x \in (0, \tfrac{1}{\mu\eta})$. Then it holds that $(1 - x\eta r_t) (1 - x\eta r_{t+1}) \geq 1$.
\end{lemma}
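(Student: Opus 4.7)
The plan is to reduce the stated inequality to a sharp algebraic condition that can be verified directly from the given hypotheses. Since $r_t < 0$, the factor $1 - x\eta r_t = 1 + x\eta|r_t|$ already exceeds $1$, so the sub-case $r_{t+1} \leq 0$ is trivial: there $1 - x\eta r_{t+1} \geq 1$ and the product automatically exceeds $1$. I therefore focus on the non-trivial sub-case $r_{t+1} > 0$, in which the formula $r_{t+1} = (1-\alpha_t)|r_t| - \beta_t|r_t|^2$ (valid in the post-initial-phase regime, where $a'_t \equiv b'_t \equiv 0$) together with $\beta_t > 0$ (immediate from $\mu\eta < 2$ in the regime of Theorem~\ref{thm: large-ss}) forces $1 - \alpha_t > \beta_t|r_t| > 0$.

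In the sub-case $r_{t+1} > 0$, I observe that $(1+x\eta|r_t|)(1 - x\eta r_{t+1}) \geq 1$ is algebraically equivalent (after clearing the positive denominator) to $r_{t+1} \leq |r_t|/(1 + x\eta|r_t|)$. Substituting the expression for $r_{t+1}$ and simplifying, one arrives at the cleaner equivalent form
\begin{align*}
    x\eta\, r_{t+1} \leq \alpha_t + \beta_t|r_t|.
\end{align*}
Invoking the hypothesis $x < 1/(\mu\eta)$, which gives $x\eta \leq 1/\mu$, it suffices to prove $r_{t+1}/\mu \leq \alpha_t + \beta_t|r_t|$; then the upper bound $r_{t+1} \leq (1-\alpha_t)|r_t|$ (from $\beta_t|r_t|^2 \geq 0$) further reduces the claim to the sufficient condition
\begin{align*}
    |r_t|(1 - \mu\beta_t) \leq \alpha_t(\mu + |r_t|).
\end{align*}

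The heart of the argument is then to show $\mu\beta_t > 1$, which makes the left side non-positive while the right side is strictly positive. Writing $\lambda = \mu\eta$, expanding $\mu\beta_t$, and using the identity $c'_x = (1+x)c_x$, so that $\eta c'_x b_t = (1+x)s_t$, I obtain
\begin{align*}
    \mu\beta_t - 1 = -(\lambda - 1)^2 + \lambda\eta|r_t| + \lambda(1+x)s_t.
\end{align*}
The hypothesis $\alpha_t > 0$ translates to $s_t > \lambda - 1$, and $x > 0$ gives $1+x > 1$. Combining these with $\lambda > 1$ yields
\begin{align*}
    \lambda(1+x)s_t > \lambda(\lambda - 1) > (\lambda - 1)^2,
\end{align*}
so $\mu\beta_t - 1 > 0$, closing the argument.

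The main obstacle, and what makes this lemma more delicate than it first appears, is that the naive bound $\beta_t \geq \eta(2 - \eta\mu) = (1 - (\lambda-1)^2)/\mu$ is \emph{strictly less} than $1/\mu$ whenever $\lambda > 1$, so one cannot close the argument from the smoothness contribution alone. The deficit of order $(\lambda-1)^2/\mu$ must be absorbed by the extra contribution $\lambda(1+x)s_t$ coming from the coupling between the $r$ and $b$ dynamics through the coefficient $c'_x$; it is precisely the hypothesis $\alpha_t > 0$ (equivalently $s_t > \mu\eta - 1$), together with the positivity $x > 0$, that supplies this compensation and is therefore tight.
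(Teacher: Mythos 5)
Your proof is correct, and it takes a genuinely different route from the paper's. The paper argues by direct expansion: substituting $r_{t+1}=-(1-\alpha_t+\beta_t r_t)r_t$ into the product, it obtains $(1-x\eta r_t)(1-x\eta r_{t+1})\ge 1-x^2\eta^2 r_t^2+x\eta\beta_t r_t^2$ after discarding the $\alpha_t$-term (using $\alpha_t>0$, $r_t<0$), and closes with $\beta_t\ge\eta(2-\eta\mu)\ge x\eta$. You instead split off the trivial sub-case $r_{t+1}\le0$, clear the positive factor $1+x\eta|r_t|$ to get the exact equivalent $x\eta r_{t+1}\le\alpha_t+\beta_t|r_t|$ (your algebra here checks out), weaken $x\eta<1/\mu$, and reduce everything to the single inequality $\mu\beta_t>1$, which you establish by feeding $\alpha_t>0$ (i.e.\ $s_t>\mu\eta-1$) back into $\beta_t$ via the identity $c'_x=(1+x)c_x$. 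Your closing observation is accurate and genuinely clarifying: the crude bound $\beta_t\ge\eta(2-\eta\mu)$ gives only $\mu\beta_t\ge 1-(\mu\eta-1)^2<1$, so your route truly needs the coupling term $\lambda(1+x)s_t$ and hence uses $\alpha_t>0$ in an essential quantitative way, whereas the paper uses $\alpha_t>0$ only to drop a term and instead leans on $x\le 2-\eta\mu$. Two caveats. First, your final chain $\lambda(1+x)s_t>\lambda(\lambda-1)\ge(\lambda-1)^2$ requires $\lambda=\mu\eta>1$; that is the regime in which the lemma is invoked (Appendix~\ref{apdx: large-ss}), but it is not literally among the lemma's stated hypotheses, so you should state it as an added assumption --- for $\mu\eta\le1$ your sufficient condition $|r_t|(1-\mu\beta_t)\le\alpha_t(\mu+|r_t|)$ still holds but must be verified with $1-\mu\beta_t\le(\mu\eta-1)^2$ rather than $1-\mu\beta_t<0$. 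Second, like the paper's own proof, you implicitly use the post-initial-phase form of the update (the $\gamma_t$-term set to zero); you flag this explicitly, so it is fine.
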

\begin{proof}
    We expand the term using Eq.~(\ref{eq: r-in-alpha-beta})
    \begin{align*}
        (1 - x\eta r_t) (1 - x\eta r_{t+1}) & = (1 - x\eta r_t) \cdot \Big(1 - x\eta (1 -\alpha_t +\beta_t r_t) \cdot r_t \Big) \\
        & = (1 - x\eta r_t) \cdot (1 - x\eta r_t) +  x\eta \alpha_t r_t \cdot (1 - x\eta r_t) + x\eta \beta_t r_t^2 \cdot (1 - x\eta r_t) \\      
        & \geq 1 - x^2 \eta^2 r_t^2 +  x\eta \beta_t r_t^2 \\
        & \geq 1
    \end{align*}
    where the first inequality is from $r_t < 0$, $\alpha_t > 0$. For the second inequality, we calculate
    \begin{align*}
        \beta_t & = 2\eta - \eta^2\big( \mu + r_t - (1-x) s_t \big) \\
        & = 2\eta - \eta^2 \mu - \eta^2 r_t + (1-x) \eta^2 s_t \big) \\
        & \geq 2\eta - \eta^2\mu = \eta( 2 - \eta\mu ) \geq x\eta
    \end{align*}
    due to $r_t < 0$, $s_t > 0$ and $x\eta\mu \leq 1$.
\end{proof}

\paragraph{Convergence phase.}
The above lemmas indicate if $x \in (0, \tfrac{1}{\eta\mu})$ is true, the iteration will finally enter the third phase, where $\alpha_t > 0$ guarantees the convergence. 

We state the result in the next Lemma, where we establish the convergence of $r_t$ using Lemma~\ref{lemma: two-iter-lower-bound}, very similar to Lemma~\ref{lemma: small-ss-convergence} in Appendix~\ref{apdx: small-ss}. %We would like to remark that, although under condition $\eta \mu \geq 1$, the change of sign $r_t r_{t+1} < 0$ is true for any $t \geq \mathfrak{t}$, the convergence lemma does not require this to hold. Also, similar to Appendix, we will provide another lemma in the subsequent part of this section, which is build on $r_tr_{t+1} < 0$ and therefore enables a refined estimate of convergence rate. 

\begin{lemma} \label{lemma: large-ss-convergence}
    \correction{Suppose that $x \in (0, \tfrac{1}{\eta\mu})$. Then there exists a universal constant $C>0$ such that for any $\mu\eta\in (1, \min\{\tfrac{3\sqrt{2}-2}{2}, 1+C^{-1}/4\})$ and any $t\geq\mathfrak{t}$, (a) $r_tr_{t+1}<0$ and (b) the iteration $(r_t, s_t)$ converges to $(0, s_\infty)$ in a linear rate as
    \begin{align*}
        |r_t| \leq \exp\big( - \Theta(\mu\eta - 1) \cdot (t-\mathfrak{t}) \big) \cdot |r_{\mathfrak{t}}|.
    \end{align*}
    Moreover, it holds that $\lim_{t\to\infty} b_t \leq C(\mu\eta - 1) $.}
\end{lemma}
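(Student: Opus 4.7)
The plan is to show that once the iterate enters the convergence phase at $t=\mathfrak{t}$, everything stays under control: the sign alternation $r_tr_{t+1}<0$ persists, $\alpha_t$ is trapped in a narrow positive band, and Lemma~\ref{lemma: two-iter-lower-bound} can then be iterated to obtain the linear rate. The enabling smallness comes from Lemma~\ref{lemma: crossing}, which gives $\alpha_\mathfrak{t}\le 2(\mu\eta-1)$ and hence $s_\mathfrak{t}\le 2(\mu\eta-1)$, together with Lemma~\ref{lemma: r-s-bound} and the factorization $\mu^2\eta^2+2\mu\eta-3=(\mu\eta-1)(\mu\eta+3)$, which yields $|r_\mathfrak{t}|\le\rho_+=\Theta(\sqrt{\mu\eta-1})$ for $\mu\eta$ close to one.

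The core argument will be a simultaneous induction on $t\ge\mathfrak{t}$ carrying three statements in lockstep: (i) $r_tr_{t+1}<0$; (ii) $\alpha_t\in(0,\mathcal{C}(\mu\eta-1))$ for a universal constant $\mathcal{C}$; and (iii) $|r_t|\le C_1\bigl(1-c(\mu\eta-1)\bigr)^{t-\mathfrak{t}}|r_\mathfrak{t}|$. Given (ii), statement (i) follows from $-r_{t+1}/r_t=1-\alpha_t+\beta_tr_t$ by the same case split used in the proof of Lemma~\ref{lemma: phase-transition} to certify $1+\beta_tr_t>0$. The linear contraction (iii) then follows directly from Lemma~\ref{lemma: two-iter-lower-bound} once (i) and (ii) are in hand, since $(1-\alpha_t)(1-\alpha_{t+1})\le\bigl(1-\Theta(\mu\eta-1)\bigr)^2$.

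The nontrivial step is the upper bound in (ii). For this I would exploit
\begin{align*}
\frac{s_t}{s_\mathfrak{t}}=\prod_{k=\mathfrak{t}}^{t-1}(1-x\eta r_k)^2\le\exp\Bigl(-2x\eta\sum_{k=\mathfrak{t}}^{t-1}r_k+O\Bigl(\sum_{k=\mathfrak{t}}^{t-1} r_k^2\Bigr)\Bigr),
\end{align*}
and bound both sums with the help of the pairing identity $r_k+r_{k+1}=\alpha_kr_k-\beta_kr_k^2$ together with the geometric decay in (iii). The quadratic sum is a geometric series with ratio $1-\Theta(\mu\eta-1)$ and first term $|r_\mathfrak{t}|^2=O(\mu\eta-1)$, so $\sum r_k^2=O(1)$; the alternating sum, collapsed through the pairing, reduces to $\sum(\alpha_kr_k-\beta_kr_k^2)$, which is also $O(1)$. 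Hence $s_t/s_\mathfrak{t}\le K$ for a universal $K$, so $\alpha_t\le 2K s_\mathfrak{t}\le 4K(\mu\eta-1)$. Choosing $\mathcal{C}=4K$ and invoking the hypothesis $\mu\eta<1+1/(4\mathcal{C})$ keeps $\alpha_t<1/2$, closing the induction; the moreover bound $\lim_{t\to\infty}b_t\le C(\mu\eta-1)$ is then immediate from $s_t=\eta c_x b_t$ with $c_x=\Theta(1)$.

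The main obstacle is the circularity of this bootstrap: the bound on $s_t$ needs the decay of $r_k$; the decay of $r_k$ needs $\alpha_t$ bounded; $\alpha_t$ bounded needs $s_t$ bounded. This must be resolved by advancing (i), (ii), and (iii) inside the same induction step, with the universal constant $\mathcal{C}$ and the threshold $\mu\eta<1+1/(4\mathcal{C})$ calibrated precisely so that the quadratic smallness $|r_\mathfrak{t}|^2=O(\mu\eta-1)$ suffices to keep the cumulative multiplicative growth of $s_t$ below the ceiling $\mu\eta$ required for $\alpha_t<1$.
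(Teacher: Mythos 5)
Your proposal is correct and follows essentially the same route as the paper's proof: a simultaneous induction over two-step blocks starting at $\mathfrak{t}$ that carries sign alternation, a bound $s_t\le C(\mu\eta-1)$ (equivalently the band for $\alpha_t$), and the two-step contraction from Lemma~\ref{lemma: two-iter-lower-bound}, with the circularity broken exactly as you describe by feeding $|r_{\mathfrak{t}}|\le\rho_+=\Theta(\sqrt{\mu\eta-1})$ into the geometric sum $\sum_k r_k^2=O(1)$ so that the cumulative growth of $s_t$ stays a universal factor above $s_{\mathfrak{t}}$, and with the threshold $\mu\eta<1+1/(4\mathcal{C})$ calibrated to keep the sign argument valid. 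The only cosmetic difference is that the paper collapses the product $\prod(1-x\eta r_k)^2$ pairwise via Lemma~\ref{lemma: s-bound} and the identity $r_k+r_{k+1}=\alpha_kr_k-\beta_kr_k^2$ rather than expanding the logarithm as you do, which amounts to the same estimate.
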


\begin{proof}

    %Similar to the proof of Lemma~\ref{}, $r_tr_{t+1} < 0$ is equivalent to $1 - \alpha_t + \beta_t r_t > 0$. We discuss two separate cases. In the first case, we have $r_t < 0$ 
    
    %Let us assume $r_tr_{t+1} < 0$ for any $t \geq t_0$. 
    Lemma~\ref{lemma: crossing} states that $r_{\mathfrak{t}} > 0$, which suggests $r_t < 0$ holds if $t - \mathfrak{t}$ is an odd number. \correction{Therefore, we prove the lemma by considering any $t$ with $t - \mathfrak{t}$ to be odd: suppose for any such $t$, the following statements are true:
    \begin{enumerate}[(1).,leftmargin=*]
        \item $r_{t+1} > 0$, $r_{t+2} < 0$;
        \item $|r_{t+1}| \leq O(|r_t|)$, $|r_{t+2}| \leq (1 -\Theta(\mu\eta -1))^2 \cdot |r_t|$;
        \item $s_t \leq s_{t+2} \leq C(\mu\eta - 1)$.
    \end{enumerate}
    Then we are able to the change of sign in (a) is true for any consecutive iteration. In the meanwhile, we can repeatedly use (2) to establish the non-asymptotic linear convergence of $|r_t|$ as well as the uniform upper bound of $s_t$ for any $t \geq \mathfrak{t}$.
    %Moreover, because $|r_{t+1}| = |1 - \alpha_t + \beta_t r_t| \cdot |r_t|$ and (2) for any even $t-\mathfrak{t}$, we can establish the linear convergence of $|r_t|$ in (b) holds for any $t \geq \mathfrak{t}$. Then it suffices to prove the above facts by induction.
    We will prove the correctness using induction. Suppose the above statements are true for $t' \leq t$ with $t-t'$ to be even, given that $t - \mathfrak{t}$ is also even. Then $r_t < 0$ holds. We first show (1) is true. Similar to the proof of Lemma~\ref{lemma: phase-transition}, it suffices to show $1 - \alpha_t + \beta_t r_t > 0$. We can expand the term by plugging the definition of $\beta_t$
    \begin{align*}
        1 - \alpha_t + \beta_t r_t = 1 + \alpha_t + 2\eta r_t - \eta^2 r_t \cdot \big(\mu + r_t - c'_x b_t\big) \geq 1 + \alpha_t + 2\eta r_t.
    \end{align*}
    where the inequality is due to $r_t < 0$. Moreover, we insert the definition of $\alpha_t$
    \begin{align*}
        1 + \alpha_t + 2\eta r_t = 1 - 2 + 2\eta\mu - 2s_t + 2\eta r_t.
    \end{align*}    
    We consider the auxiliary sequence $\rho_{t+1} = g(\rho_t, 0)$ with the same initialization at $r_{t_0}$. Using the argument in the proof of Lemma~\ref{lemma: phase-transition} and our condition $r_{t'}r_{t'+1} < 0$ for any $t' \leq t - 1$, we are able to invoke Lemma~\ref{lemma: r-s-bound} to obtain $r_t \geq \rho_-$. This leads to a lower bound
    \begin{align*}
        1 + \alpha_t + 2\eta r_t \geq & 1 - 2 + 2\eta\mu - 2s_t + 2\eta\rho_- \\
        \geq & 1 + 2(\eta\mu - 1) - 2 C (\mu\eta - 1) - 3 (\mu\eta - 1) \\
        = & 1 - (2C+1) (\eta\mu - 1) > 0.
    \end{align*}
    due to (3), $\rho_- = - \frac{\mu\eta-1+\sqrt{(\mu\eta-1)(\mu\eta+3)}}{2\eta} \geq - \frac{3(\mu\eta - 1)}{2\eta}$, and $\eta\mu \in (1, \min\{ \frac{3\sqrt{2}-2}{2},1+1/(4C)\})$. This immediately implies $r_tr_{t+1} < 0$ and $r_{t+1} > 0$. For the sign of $r_{t+2}$, we first notice
    \begin{align*}
        \beta_{t+1} = 2\eta - \eta^2 \cdot \big( r_t + \mu - c'_xb_t \big) = 2\eta - \eta^2 \cdot (1+x^2) \cdot \big( a_t + x^3 \cdot b_t \big) < 2\eta
    \end{align*}
    due to the non-negativity of $a_t$ and $b_t$. Besides, because $\rho_- \leq r_t < 0$, we have
    \begin{align*}
        (1 - x \cdot \eta r_t)^2 \leq \left(1 - x \cdot \eta \rho_-\right)^2 \leq \left(1 + \frac{3(\mu\eta - 1)}{2}\right)^2 < 2
    \end{align*}
    where the last inequality is due to $\mu\eta \leq \frac{3\sqrt{2}-2}{2}$. As a result, we lower bound as
    \begin{align*}
        1 - \alpha_{t+1} + \beta_{t+1} r_{t+1} & \geq 1 - \alpha_{t+1} = 1 - 2 + 2\mu\eta - 2s_{t+1} \\
        & =  1 - 2 + 2\mu\eta - (1 - x \cdot r_t)^2 \cdot 2s_{t} \\
        & \geq 1 - (4C - 2)(\mu\eta - 1) > 0.
    \end{align*}
    This immediately yields $r_{t+2}r_{t+1} > 0$ and hence $r_{t+2} < 0$.
    }
    
    For (2), since $r_tr_{t+1} < 0$ is true, we invoke Lemma~\ref{lemma: two-iter-lower-bound} to obtain the lower bound for $r_{t+2}$, which is also negative by our assumption:
    \begin{align*}
        r_{t+2} & \geq (1 - \alpha_{t}) (1 - \alpha_{t+1}) \cdot r_{t}.  
    \end{align*}
    Because $r_{t} < 0$, $\alpha_t > 0$ and Lemma~\ref{lemma: s-bound}, it holds that
    \begin{align*}
        \alpha_{t+1} = 2 - 2\eta\mu + 2s_{t+1}  \geq 2 - 2\eta\mu + 2s_{t+1} \geq 2 - 2\eta\mu + 2s_{\mathfrak{t}+1},
    \end{align*}
    which suggests $\alpha_{t+1} \geq \alpha_{t} \geq \alpha_{\mathfrak{t}+1} = \Theta(\mu\eta - 1)$. As a result
    \begin{align*}
        |r_{t+2}| \leq \big(1 - \Theta(\mu\eta - 1) \big)^2 \cdot |r_t|.
    \end{align*}
    \correction{From the above discussion, we know $1-\alpha_t + \beta_t r_t > 0$ and $\beta_t > 0$. Then we compute
    \begin{align*}
        \frac{|r_{t+1}|}{|r_t|} = 1 - \alpha_t + \beta_t r_t \leq 1 - 2 + 2\mu\eta  = 2\mu\eta - 1\leq 2
    \end{align*}
    due to $\mu\eta \leq 2$ and $r_t < 0$. This implies $|r_{t+1}| \leq O(|r_t|)$.
    }
    It remains to check for (3). We first prove the first half, i.e, 
    \begin{align*}
        s_{t+2} = (1 - x\eta r_t)^2(1 - x\eta r_{t+1})^2 \cdot s_t \geq s_t
    \end{align*}
    where the inequality is due to $r_t < 0$, $\alpha_t \geq 0$ and Lemma~\ref{lemma: s-bound}. By repeating the above steps, we conclude that $s_t \geq s_{\mathfrak{t}+1}$ for any $t \geq \mathfrak{t}$ with $r_t < 0$. For the upper bound of $s_{t+2}$,
    %Now consider any $t \geq \mathfrak{t}$ with $r_t < 0$.
    %Therefore, by repeating the above steps we obtain
    %\begin{align*}
    %    |r_t| \leq \exp\left( -\Theta(\mu\eta - 1) \cdot (t - \mathfrak{t}) \right) \cdot |r_{\mathfrak{t}}|.
    %\end{align*}
    %Now since $r_t$ converges to $0$ when $t$ goes to infinity, we know that $s_t$ is also convergent. It remains bound for it when $t$ goes to infinity. 
    %similar to the proof of Lemma~\ref{lemma: small-ss-convergence}, it suffices to bound $s_t$ with $r_t > 0$. 
    we skip some calculations identical to the proof of Lemma~\ref{lemma: small-ss-convergence} and obtain
    \begin{align*}
        s_t = \exp\left( 2x\eta^2 \cdot \sum_{\substack{i=\mathfrak{t}\\i- \mathfrak{t} \text{ even}}}^{t-2} r_t^2\right) \cdot s_{\mathfrak{t}} & \leq \exp\left( 2x\eta^2 r^2_{\mathfrak{t}} \cdot \sum_{\substack{i=\mathfrak{t}\\i- \mathfrak{t} \text{ even}}}^{t-2} e^{ -\Theta(\mu\eta - 1) \cdot (t - \mathfrak{t})} \right) \cdot s_{\mathfrak{t}} \\
         & \leq \exp\left( 2x\eta^2 r^2_{\mathfrak{t}} \cdot \sum_{\substack{i=\mathfrak{t}\\i- \mathfrak{t} \text{ even}}}^{t-2} \frac{1}{1-e^{ -\Theta(\mu\eta - 1) \cdot (t - \mathfrak{t})}} \right) \cdot s_{\mathfrak{t}} \\       
         & \leq \exp\left( \frac{ 2x\eta^2 r^2_{\mathfrak{t}} }{\Theta(\mu\eta-1)} \right) \cdot s_{\mathfrak{t}} \\
         & \leq \exp\left( \frac{ 2x\eta (\mu\eta-1)}{\Theta(\mu\eta-1)} \right) \cdot s_{\mathfrak{t}} = e^{\Theta(1)} \cdot \Theta(\mu\eta - 1) = C(\mu\eta - 1),
    \end{align*}
    due to $0 \leq r_{\mathfrak{t}} \leq \rho_+ \leq \tfrac{\sqrt{\mu\eta-1}}{\eta}$ (Lemma~\ref{lemma: s-bound}), and $s_\mathfrak{t} \leq \Theta(\mu\eta - 1)$ (Lemma~\ref{lemma: crossing}) where $C >0$ is some universal constant. Now since all of the facts are true, we obtain the upper bound of the limit $\lim_{t\to\infty} s_t \leq \Theta(\mu\eta - 1)$ and $\lim_{t\to\infty} (r_t,s_t) = (0, s_{\infty})$.
\end{proof}
Finally, we put every piece together and prove Theorem~\ref{thm: large-ss}.
\begin{proof}
    The proof is almost very similar to the Proof of Theorem~\ref{thm: small-ss}, and we will omit the identical steps and focus on the difference. When $|x| \in (0, \tfrac{1}{\eta\mu})$, Lemma~\ref{lemma: phase-transition} suggests that $\lim_{t\to\infty} \alpha_t > 0$. As a result, there always exists a $\mathfrak{t}$ such that $\alpha_t > 0$ is true for any $t \geq \mathfrak{t}$. As a result, we can use Lemma~\ref{lemma: large-ss-convergence} to show that $\evbeta_{\vw_t}$ converges to a linear interpolator as
    \begin{align*}
        \evbeta_{\infty} := \lim_{t\to\infty} \big(p^0_t - q_t^0 \big) \cdot \evbeta_0 + \big(p^1_t - q_t^1 \big) \cdot \evbeta_1 + \evbeta^* = \evbeta^* + \evbeta_1 \cdot x^2 \cdot \lim_{t \to \infty} b_t - b'_t.
    \end{align*}
    We first show part (1) is true. From Lemma~\ref{lemma: pq-dynamics}, Lemma~\ref{lemma: main-dynamics}, we can decompose $\evbeta_{\vw_{t}}$ as
    \begin{align*}
        \evbeta_{\vw_{t}} = \vw^2_{t,+} - \vw^2_{t, -} & = (p^0_t - q^0_t) \cdot \evbeta_0 + (p^1_t - q^1_t) \cdot \evbeta_1 + \evbeta_* %\\
        %&   \cdot \evbeta_0 + (p^1_t - q^1_t) \cdot \evbeta_1.
    \end{align*}
    The convergence speed is then: for any $t \geq \mathfrak{t}$:
    \begin{align*}
        | \langle \evbeta_{\vw_t} - \evbeta_{\infty}, \vx \rangle| =  | \langle \evbeta_{\vw_t} - \evbeta^*, \vx \rangle| = |r_t| \leq  C \cdot \exp\left(- \Theta(\mu\eta - 1) \cdot (t - \mathfrak{t})\right) |r_{\mathfrak{t}}|.
    \end{align*}
    Moreover, it holds that
    \begin{align*}
        \left\| \evbeta_{\infty} - \evbeta^* \right\| =  \lim_{t\to\infty}  (p^1_t - q^1_t)  \cdot \left\| \evbeta_1  \right\| = (\mu\eta - 1).
    \end{align*}   
\end{proof}

\subsection{Proof of Proposition~\ref{prop: even-larger-ss} 
}\label{apdx: even-larger}
This subsection contains the convergence proof when $\eta\mu > \tfrac{3\sqrt{2}-2}{2}$. The mechanism and the steps are most similar to the proof of Theorem~\ref{thm: large-ss}, whereas when $\eta\mu > \tfrac{3\sqrt{2}-2}{2}$, it is difficult to characterize the $r_t$ sequence via the auxiliary $\rho_t$'s, which is necessary for the estimation of convergence speed. Therefore we put more assumptions and only give an asymptotic result. We begin by finding the fixed point of $(g,h)$ and discuss their stability: intuitively, if a fixed point is (locally) stable, then the nearby trajectory will converge to the point; otherwise, the trajectory will diverge from the point \cite{strogatz2018nonlinear,robinson2012introduction}.

\begin{lemma} \label{lemma: stable}
    Suppose that $\eta\mu \in (1,2)$ and $x \in (0, 1)$. The mapping $(g,h): \sR^2 \to \sR^2$ admits fixed points $(r_{1,1}, s_{1,1}) = 0 \times \sR$ and $(r_{1,2}, s_{1,2}) = \left( \tfrac{2}{\eta x}, \tfrac{(1 - x) (2 + x\mu \eta)}{x} \right)$. Moreover, $(r_{1,1}, s_{1,1})$ is a stable point when  when $s \in (\mu\eta -1, \mu\eta)$, and an unstable point when $x \notin (\mu\eta -1, \mu\eta)$. $(r_{1,2}, s_{1,2})$ is unstable regardless of the choice of parameters.
\end{lemma}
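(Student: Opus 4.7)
The plan is to first locate all fixed points by solving $(g,h)=(r,s)$ jointly, then analyze stability via the Jacobian $J=\begin{pmatrix}\partial_r g & \partial_s g\\ \partial_r h & \partial_s h\end{pmatrix}$ at each point. The key simplification is the factorization
\[h(r,s)-s=s\cdot x\eta r\,(x\eta r-2),\]
so $h(r,s)=s$ forces one of $s=0$, $r=0$, or $r=r^*:=2/(\eta x)$. The branch $r=0$ automatically satisfies $g(0,s)=0$ for every $s$, producing the entire line of fixed points $(0,s)$. The branch $r=r^*$ turns $g(r^*,s)=r^*$ into a linear equation in $s$, whose unique solution is $s^*=(1-x)(2+x\mu\eta)/x$, matching $(r_{1,2},s_{1,2})$.

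At $(0,s)$, differentiation gives $\partial_s g(0,s)=0$, so
\[J(0,s)=\begin{pmatrix}2s-2\mu\eta+1 & 0\\ -2x\eta s & 1\end{pmatrix}\]
is lower triangular with eigenvalues $2s-2\mu\eta+1$ and $1$. The eigenvalue $1$ corresponds to the tangent direction $(0,1)$ along the curve of fixed points and is a structural degeneracy rather than an instability. The transverse eigenvalue $2s-2\mu\eta+1$ lies strictly inside the unit circle iff $s\in(\mu\eta-1,\mu\eta)$, yielding the asserted stability range, and lies strictly outside for $s\notin[\mu\eta-1,\mu\eta]$, producing an unstable transverse direction.

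For the second fixed point the Jacobian is no longer triangular. Using $h(r,s)=s(1-x\eta r)^2$ one obtains $\partial_r h(r^*,s^*)=2x\eta s^*$ and $\partial_s h(r^*,s^*)=1$; direct differentiation of $g$ and substitution of $r^*=2/(\eta x)$ and $s^*$ yield $\partial_s g(r^*,s^*)=-4/(x^2\eta)$ and, after collecting terms, $\partial_r g(r^*,s^*)=5+2x\mu\eta-4/x+4/x^2$. Hence
\[\det J=\partial_r g-\partial_s g\,\partial_r h=5-8\mu\eta+2x\mu\eta+\tfrac{8\mu\eta-20}{x}+\tfrac{20}{x^2}.\]
I would then show $\det J>1$ for every $x\in(0,1)$ and $\mu\eta\in(1,2)$: substituting $u:=1/x\ge 1$ reduces the claim to positivity on $[1,\infty)$ of $\varphi(u):=20u^2+(8\mu\eta-20)u+2\mu\eta/u+4-8\mu\eta$. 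One checks $\varphi(1)=2\mu\eta+4>0$ and, using $1/u^2\le1$, $\varphi'(u)=40u+(8\mu\eta-20)-2\mu\eta/u^2\ge 40+(8\mu\eta-20)-2\mu\eta=20+6\mu\eta>0$. Thus $\det J>1$, so $|\lambda_1\lambda_2|>1$ at $(r^*,s^*)$, forcing at least one eigenvalue outside the unit disk and proving unconditional instability.

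The main obstacle is algebraic rather than conceptual: extracting the clean closed form for $\partial_r g(r^*,s^*)$ requires carefully combining $r^*=2/(\eta x)$ with the explicit value of $s^*$ so that several partially overlapping $\mu\eta/x$ contributions cancel. Once the closed form for $\det J$ is secured, the remaining positivity argument is straightforward one-variable calculus, and the stability analysis at $(0,s)$ is a direct eigenvalue reading from the triangular Jacobian.
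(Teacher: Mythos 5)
Your proposal is correct and follows the same basic strategy as the paper (linearize at each fixed point and read off the Jacobian spectrum), but it differs in three substantive ways, and in one of them it is actually \emph{more} correct than the paper. First, you derive the fixed points by hand from the factorization $h(r,s)-s=s\cdot x\eta r\,(x\eta r-2)$, whereas the paper imports them from Lemma~\ref{lemma: fixed-orbit}, where they were obtained by Mathematica; your route is cleaner and self-contained. (One caveat: you announce that you will ``locate all fixed points'' but never resolve the $s=0$ branch, which does contain two further fixed points $(r,0)$ with $\eta^2r^2-(2\eta-\mu\eta^2)r-2\mu\eta=0$; since the lemma only asserts that the two listed families \emph{are} fixed points, this omission is harmless here, but you should not claim exhaustiveness.) Second, at $(0,s)$ your Jacobian $\begin{pmatrix}1-2\mu\eta+2s & 0\\ -2x\eta s & 1\end{pmatrix}$ is right: the eigenvalues are $1-2\mu\eta+2s$ and $1$, the latter being $\partial_s h(0,s)=(1-x\eta\cdot 0)^2=1$ and corresponding to the tangent direction along the line of fixed points. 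The paper instead reports the second eigenvalue as $0$ and concludes ``both $|\lambda|<1$''; that is an error, and your framing --- stability transverse to the normally hyperbolic curve of fixed points, with the unit eigenvalue being a structural degeneracy --- is the honest statement of what is actually proved. Third, for $(r_{1,2},s_{1,2})$ you use $|\det J|>1$ where the paper uses $\Tr J\geq 5$; both force an eigenvalue outside the unit disk, but the trace route is cheaper since $\partial_s h(r^*,s^*)=1$ means only $\partial_r g(r^*,s^*)=5-4/x+4/x^2+2x\mu\eta$ is needed, while your determinant route requires the off-diagonal entries $\partial_s g=-4/(\eta x^2)$ and $\partial_r h=2x\eta s^*$ as well. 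I verified your closed form for $\det J$ and the positivity argument for $\varphi(u)$ on $u\geq 1$; both check out, so your conclusion of unconditional instability is sound.
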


\begin{proof}
    In the proof of Lemma~\ref{lemma: fixed-orbit}, we already show that $(r_{1,1}, s_{1,1})$ and $(r_{1,2}, s_{1,2})$ are fixed point. It only remains to characterize their properties.

    We first consider $(r_{1,1}, s_{1,1})$. The Jacobian matrix of $(g, h)$ is defined as
    \begin{align*}
        \mJ = \begin{bmatrix}
            g_r & g_s \\ h_r & h_s
        \end{bmatrix}
    \end{align*}
    where $g_r$, $g_s$ and $h_r$, $h_s$ are first order partial derivatives. We plug $(r_{1,1}, s_{1,1})$ in and compute the eigenvalues of $\mJ$
    \begin{align*}
        \lambda_{1,1} = 0, \qquad \lambda_{1,2} = 1 - 2\mu\eta + 2s.
    \end{align*}
    It is easy to show that when $s \notin (\mu\eta -1, \mu\eta)$, $|\lambda_{1,2}| > 1$ and hence unstable. Instead, when $s \in (\mu\eta -1, \mu\eta)$, both $|\lambda_{1,1}| < 1$ and \correction{$|\lambda_{1,2}| < 1$}, hence $(r_{1,1}, s_{1,1})$ is stable.

    We proceed and analyze the stability of $(r_{1,2}, s_{1,2})$. When $x \in [0,1]$, the sum of two eigenvalues can be lower bounded as
    \begin{align*}
        \lambda_{2, 1} + \lambda_{2,2} = \Tr(\mJ) = 5 + \frac{4}{x^2} - \frac{4}{x} + 2x\mu\eta \geq 5. %\mu\eta x+\frac{2}{x^2}-\frac{2}{x}+\frac{5}{2} \pm \frac{1}{2} \sqrt{4 \mu ^2 \eta^2 x^2+20 \mu  \eta x+\frac{8 (2 \mu \eta-9)}{x}-16 \mu \eta+\frac{16}{x^4}-\frac{32}{x^3}+\frac{56}{x^2}+25},
    \end{align*}
    This implies $\max\{ \lambda_{2, 1}, \lambda_{2,2} \} \geq \tfrac{5}{2}$. As a result, $(r_{1,2}, s_{1,2})$ is unstable.
\end{proof}

\begin{proof}[Proof of Proposition~\ref{prop: even-larger-ss}]
    We use the same argument in the proof of Lemma~\ref{lemma: phase-transition} to show that $\lim_{t\to\infty} s_t \in (\mu\eta -1, \mu\eta)$: by our assumption, the $(r_t, s_t)$ iteration does not diverge or becomes chaotic. As a result, it will converge to a stable point or periodic stable orbit. Now due to Lemma~\ref{lemma: fixed-orbit}, when $x \in (0, \tfrac{1}{\mu\eta})$, $(g,h)$ admits no periodic points when $\alpha_t \leq 0$ (or equivalently $s_t \leq \mu \eta - 1$). Therefore, $\lim_{\alpha\to\infty} \alpha_t > 0$ is true and there exists some $\mathfrak{t}$ such that $\alpha_t > 0$ or (or equivalently $s_t > \mu \eta - 1$) holds for any $t \geq \mathfrak{t}$. By Lemma~\ref{lemma: stable}, $(0, s)$ with $s \in (\mu\eta-1,\mu\eta)$ are the only stable point, then it holds that $\lim_{t\to\infty} r_t = 0$. This implies $b_t$ or $s_t$ also converges due to its update. Using the identical arguments in the proof of Theorem~\ref{thm: large-ss}, we reach the conclusion
\end{proof}

\section{Proofs of Results in Section~\ref{sec: proof-overview}} \label{apdx: toy}
\begin{proof} [Proof of Lemma~\ref{lemma: toy-positive}]
    Using update in (\ref{eq: toy-model}), we compute the expansion of $r_{t+2}$ as
    \begin{align*}
        r_{t+2} &= - (1-\alpha) \cdot r_{t+1} -  r_{t+1}^2 \\
        &= (1 - \alpha)^2  \cdot r_t -\alpha r_t^2 - r^2_{t+1} + (r_t+r_{t+1}) (r_t-r_{t+1}) \\
        &= (1 - \alpha)^2 \cdot r_t - a r_t r_{t+1} - r_t^2 (r_t - r_{t+1}). 
    \end{align*}
    If $r_{t}$, $r_{t+1}$ have different signs, we obtain the following inequality for any $r_t < 0$:
    \begin{align*}
        r_{t+2} > (1 - \alpha)^2 \cdot r_t.
    \end{align*}
    This implies $|r_{t+2}| < (1-a)^2 \cdot |r_t|$ for negative $r_t$. Since $r_t$'s are oscillating, we assert that the subsequence of negative $r_t$ converges to zero. Using the update in (\ref{eq: toy-model}), it suffices to conclude that positive $r_t$'s also converge to zero.
\end{proof}

\begin{proof}[Proof of Lemma~\ref{lemma: toy-negative}]
    We first show that $r_tr_{t+1} < 0$ holds for any $r_{t} \in [r_-,r_+]$. From (\ref{eq: toy-model}) we know
    \begin{align*}
        r_{t+1}/r_t = - 1 - a - r_t. 
    \end{align*}
    Therefore if $r_t > -1 -a$ then $r_tr_{t+1} < 0$. It is easy to verify that for any $a \in [0,1]$
    \begin{align*}
        1 + a - \frac{a+\sqrt{a^2-4a}}{2} > 1 +a - \frac{a+\sqrt{a^2}}{2} > 0,
    \end{align*}
    which suggests $r_- > - 1- a$. Then we prove the sign-alternating part.
    
    We proceed and compute the expansion of $r_{t+2}$ as 
    \begin{align*}
        r_{t+2} = (1 + a)^2 \cdot r_t + a r_tr_{t+1} -r_t^2 (r_t-r_{t+1}).
    \end{align*}
    Define the following polynomial (do not confuse with $g$ and $h$ in Appendix~\ref{apdx: main})
    \begin{align*}
        g(s) &= - 1 - a -s, \qquad h(s) = - sg(s) \cdot (1 + a + g(s)). 
    \end{align*}
    To prove that $|r_{t+2}| \geq |r_{t}|$ when $r_t \in (s_-,s_+)$, it suffices to show that $h(s) \geq s$ when $s \in (s_-,s_+)$. Clearly $h(s)$ is cubic in $s$ with its limit to be $-\infty$ and $\infty$ when $t$ goes to $\infty$ and $-\infty$. Let $s_0$ be the larger stationary point of $h(\cdot)$, it is easy to assert $s_0$ is a local minimum. With MatLab symbolic calculation we verify that $h(s_0) \geq 1$ for any $a > 0$. Then there exists a range such that $h(\cdot) > 1$. It is easy to verify the range is $[s_-, s_+]$ where $s_-$ and $s_+$ are defined as in the statement of lemma. Since $a \in (0,1)$, it holds that $s_- < 0 < s_+$
    and hence finishes the first part of the proof.
    
    To determine the limit of positive and negative subsequence, we assume for simplicity that $r_{2k} < 0$ and $r_{2k+1} > 0$ for any $k \in \sN$. Then the limits of both sequences are the solutions to equation $h(s) = s$. From the above discussion we can conclude that the following limits
    \begin{align*}
        \lim_{k\to\infty}r_{2k} = s_-, \qquad \lim_{k\to\infty}r_{2k+1} = s_+.
    \end{align*} 
    and finish the proof.
\end{proof}

\end{document}